\documentclass[10pt]{article}
\synctex=1

\usepackage{amsmath,amssymb,amsfonts,amsthm,mathtools,xfrac,extarrows,enumitem, bbold}
\usepackage{graphicx, float}
\usepackage{color}  
\usepackage[T1]{fontenc}  
\usepackage{amscd}  
\usepackage[active]{srcltx}    
\usepackage[font={small}]{caption} 
\usepackage{epstopdf}
\usepackage[T1]{fontenc}
\usepackage{amscd} 
\usepackage{xr,xr-hyper}

\numberwithin{equation}{section}  
\textwidth 16cm   
\oddsidemargin -0.01cm
\addtolength{\textheight}{2cm}
\addtolength{\topmargin}{-1cm}


\newcommand{\N}{\mathbb{N}}
\newcommand{\R}{\mathbb{R}}

\newcommand{\ind}{\text{ind}}

\makeatletter
\renewcommand*\env@matrix[1][*\c@MaxMatrixCols c]{%
  \hskip -\arraycolsep
  \let\@ifnextchar\new@ifnextchar
  \array{#1}}
\makeatother

\def\hline
{
\vspace{-12pt}
\begin{center}
\rule{0.7\linewidth}{0.5pt}
\end{center}
\vspace{12pt}
}


\newtheorem{remark}{Remark}[section]    
\newtheorem{lemma}{Lemma}[section]
\newtheorem{proposition}{Proposition}[section] 
\newtheorem{thm}{Theorem}
\newtheorem{definition}{Definition}[section]

\externaldocument[I-]{Part_1_For_References}

\numberwithin{equation}{section}

\title{Prescribing Morse scalar curvatures: critical points at infinity}

\author
{
Martin Mayer 
\\
\small University Tor Vergata,
Via della Ricerca Scientifica 1,
00133, ITALY
}

\normalsize

\begin{document}

\maketitle

\medskip
   
\begin{abstract} 

  The problem of prescribing conformally the scalar curvature of a closed Riemannian manifold as a given Morse  function 
reduces to solving an elliptic partial differential equation with critical   Sobolev exponent. Two ways of attacking this 
problem consist in   subcritical approximations or negative pseudo gradient flows. 
We show under a mild non degeneracy assumption the equivalence of both approaches with respect to zero weak limits, in particular a one to one correspondence of
 zero weak limit finite energy subcritical blow-up solutions, zero weak limit critical points at infinity of negative  type
 and sets of critical points with negative Laplacian of the function to be  prescribed.

\end{abstract}

\begin{center}
{\it Key Words :  }Conformal geometry, scalar curvature, subcritical approximation,  critical points at infinity    
\end{center}

\tableofcontents


 %
\section{Introduction}
Prescribing conformally the scalar curvature on a manifold as a given function falls into the class of variational problems, which lack compactness, as  the underlying partial differential equation is critical with respect to Sobolev's embedding. In particular the Palais-Smale condition is violated, which in classical variational theory allows the use of deformation lemmata, which in return are a fundamental pillar in the calculus of variations.

To overcome this lack of compactness one may  try to restore compactness or study a hopefully only slightly different, yet compact situation and pass to the limit or return directly  to the deformation lemmata themselves, hence studying non compact flows. 
The first approach is restrictive to  e.g. symmetric situations with improved Sobolev embedding, the second one leads to the idea of compact  approximation and the third one to the theory of critical points at infinity. 

\ 

Let us comment on the corresponding ideas. First and famously in order to restore compactness the positive mass theorem has been used, cf. \cite{Schoen_Using_Mass}. Here the argument is, that a certain sublevel set of the  variational functional is shown to be compact, while, assuming sufficient flatness of $K$ or even $K$ to be constant, the positive mass term becomes dominant in the expansion of the energy of a specific test function pushing its energetic value below the threshold of the sublevel set, i.e. the  test function already lies withing the latter, which is therefore not empty.  Hence one can find a minimizer by direct methods.  

For compact  approximations, cf. \cite{Brezis_Nirenberg},\cite{yy2},\cite{MM2},\cite{MM4} in contrast the underlying equations can be solved classically, whereas the passage to the critical limit then has to be understood in detail. The advantage of this approach is, that one deals with a sequence of solutions to specific equations rather than with arbitrary Palais-Smale sequences. Of course there will be a lack of compactness, i.e. there will be, as we pass to the critical limit, solutions, which do not converge in the variational space. But one may hope to find at least some sequences, which remain compact, thus providing a solution to the critical equation itself.    

Similarly in the context of studying non compact flows, cf. \cite{bc},\cite{Chang_Yang_S2},\cite{may-cv}, i.e. returning to the study of energy deformation, we do not have to study arbitrary Palais-Smale sequences, but flow lines. And the liberty is, that we are not bound to study a specific, but an energy deformation of our choice. In particular given a flow exhibiting non compactness somewhere, we may hope to avoid the latter by adapting the former, as was done in \cite{MM5}.
While in \cite{Chang_Yang_S2} classical min-max schemes are established by excluding certain non compactness scenarios, in \cite{bc} the topological effect of non compact flow lines to sublevels sets is computed. The difference is, that while the first result is based on avoiding non compactness, the second one uses this non compactness by understanding its topological contribution directly, which is a central topic in the theory of critical points at infinity. 

\ 

Evidently in case of compact, for instance subcritical approximations or the study of non compact flow lines one has to understand  and describe the lack of compactness in absence of at least partial compactness as in \cite{Schoen_Using_Mass} qualitatively. A natural question is, whether or not one can expect to find different results by means of subcritical approximation or the study of non compact flows, as the first describes subcritical non compact sequences of solutions and the latter non compact flow lines. 
Comparing Theorems \ref{t:ex-multi} and \ref{thm} this does not seem to be the case.
 
\subsection{Setting} 

Consider a closed Riemannian manifold
\begin{equation*}
M=(M^{n},g_{0}) 
\; \text{ with } \; n\geq 5,
\end{equation*}
 volume measure $\mu_{g_{0}}$ and scalar curvature $R_{g_{0}}$.
We assume the Yamabe invariant 
\begin{equation}\begin{split}\label{Yamabe_invariant}
Y(M,g_{0})
= &
\inf_{\mathcal{A}}
\frac
{\int \left( c_{n}\vert \nabla u \vert_{g_{0}}^{2}+R_{g_{0}}u^{2} \right) d\mu_{g_{0}}}
{(\int u^{\frac{2n}{n-2}}d\mu_{g_{0}})^{\frac{n-2}{n}}}, \; c_{n}=4\frac{n-1}{n-2}, 
\end{split}\end{equation}
where 
$$
\mathcal{A}=
\{
u\in W^{1,2}(M,g_{0})\mid u\geq 0,u\not \equiv 0
\},
$$
to be positive  positive.
As a consequence the conformal Laplacian
\begin{equation*}\begin{split} 
L_{g_{0}}=-c_{n}\Delta_{g_{0}}+R_{g_{0}}
\end{split}\end{equation*}
 is a positive and self-adjoint operator. Without loss of generality 
we assume $R_{g_{0}}>0$ and denote by
$$
G_{g_{0}}:M\times M \setminus \Delta(M)  \longrightarrow\R_{+}
\; \text{ with } \; 
\Delta(M)=\{ (m_{1},m_{2})\in M\times M \; : \; m_{1}=m_{2}\} 
$$
the Green's function of $L_{g_0}$.
Considering a conformal metric $g=g_{u}=u^{\frac{4}{n-2}}g_{0}$ there holds 
\begin{equation*}\begin{split} d\mu_{g_{u}}=u^{\frac{2n}{n-2}}d\mu_{g_{0}}
\;  \text{ and }\; 
R=R_{g_{u}}=u^{-\frac{n+2}{n-2}}(-c_{n} \Delta_{g_{0}} u+R_{g_{0}}u) =
u^{-\frac{n+2}{n-2}}L_{g_{0}}u
\end{split}\end{equation*}
due to conformal covariance of the conformal Laplacian, i.e.
\begin{equation*}
-c_{n}\Delta_{g_{u}}v+R_{g_{u}}v=L_{g_{u}}v=u^{-\frac{n+2}{n-2}}L_{g_{0}}(uv).
\end{equation*}
So prescribing conformally  the scalar curvature $R=K$ as a given function $K$ is equivalent to solving 
\begin{equation}\label{the_equation}
L_{g_{0}}u=Ku^{\frac{n+2}{n-2}}
\end{equation}
and the Green's function $G_{g_{u}}$ for $L_{g_{u}}$ transforms according to
\begin{equation*}
G_{g_{u}}(x,y)=u^{-1}(x)G_{g_{0}}(x,y)u^{-1}(y).
\end{equation*}
Moreover  we may  associate in a unique and smooth way to every  $a \in M$  a suitable conformal metric 
$$
g_{a}=g_{u_{a}}=u_{a}^{\frac{4}{n-2}}g_{0} 
\; \text{ with } \;
u_{a}=1+O(d^{2}_{g_{0}}(a,\cdot))
$$
such, that in a geodesic normal coordinate system for $ g_{a} $, which we call a conformal normal coordinate system for $ g_{0} $,
the volume element is locally euclidean, i.e. 
\begin{equation*}
\begin{split}
d\mu_{g_{a}}=dL^{n} 
\; \text{ close to  } \; a\in M,
\end{split}
\end{equation*}
cf. \cite{lp}.
In particular
\begin{equation*}
\begin{split} 
(exp_{a}^{g_{0}})^{-}\circ \exp_{a}^{g_{a}}(x)
=
x+O(\vert x \vert^{3})
\end{split}
\end{equation*}
for the exponential maps centered at $ a $, which e.g. implies
\begin{equation}\label{def_derivatives_g_0}
\begin{split}
\nabla_{g_{0}}K(a)=\nabla_{g_{a}}K(a),\; 
\nabla^{2}_{g_{0}}K(a)=\nabla^{2}_{g_{a}}K(a), 
\end{split}
\end{equation}
and in case $ \nabla_{g_{0}}K(a)=0 $  also 
$
\nabla^{3}_{g_{0}}K(a)=\nabla^{3}_{g_{a}}K(a).  
$
Then 
$$
G_{a}=G_{g_{a}}(a,\cdot),
$$
i.e. the Green's function $G_{g_{ a }}$ 
with pole at $a \in M$ for the conformal Laplacian 
$$L_{g_{a}}=-c_{n}\Delta_{g_{a}}+R_{g_{a}}$$
expands with $\omega_{n} = |S^{n-1}|$ denoting the unit volume as
\begin{equation*}\begin{split} 
G_{ a }=\frac{1}{4n(n-1)\omega _{n}}(r^{2-n}_{a}+H_{ a }), \; r_{a}=d_{g_{a}}(a, \cdot)
, \; 
H_{ a }=H_{r,a }+H_{s, a }, 
\end{split}\end{equation*}
where
$r_a$ denotes the geodesic distance from $a$ with respect to the metric $g_a$,
\begin{equation*}
\begin{split}
H_{s,a}
=
O
\begin{pmatrix}
r_{a}& \text{ for }\, n=5
\\
\ln r_{a} & \text{ for }\, n=6
\\
r_{a}^{6-n} & \text{ for }\, n\geq 7
\end{pmatrix}
\end{split}
\end{equation*}  
and $H_{r,a }\in C^{2, \alpha}_{loc}$. As due to $ R_{g_{0}}>0 $ 
\begin{align*}
c\Vert u \Vert^{2}_{W^{1,2}(M,g_0)}\leq\int u \, L_{g_{0}}u \, d\mu_{g_{0}}=\int \left( c_{n}\vert \nabla u \vert^{2}_{g_{0}}+R_{g_{0}}u^{2}
\right) d\mu_{g_{0}}\leq C\Vert u \Vert^{2}_{W^{1,2}(M,g_0)}
\end{align*}
with positive constants $ 0<c<C<\infty $, we may define and use
\begin{equation*}
\Vert u \Vert^2 = \Vert u \Vert_{L_{g_0}}^2 = \int u \, L_{g_{0}}u \, d\mu_{g_{0}}
\end{equation*}
as an equivalent norm on $W^{1,2}$. We then wish to study  the scaling invariant functional
\begin{equation*}
J:\mathcal{A}\longrightarrow \R:
u\longrightarrow 
\frac
{\int  L_{g_{0}}uu d\mu_{g_{0}}}
{(\int Ku^{\frac{2n}{n-2}}d\mu_{g_{0}})^{\frac{n-2}{n}}}
\; \text{ for } \;
K>0. 
\end{equation*}
Since the conformal scalar curvature $R=R_{u}$ for $g=g_{u}=u^{\frac{4}{n-2}}g_{0}$ satisfies
\begin{equation*}
r=r_{u}=\int R d\mu_{g_{u}}=\int u L_{g_{0}} ud\mu_{g_{0}},
\end{equation*}
we have 
\begin{equation}\label{the_functional}
J(u)=\frac{r}{k^{\frac{n-2}{n}}}
\;  \text{ with } \; k = k_{u}= \int K \, u^{\frac{2n}{n-2}} d \mu_{g_{0}}. 
\end{equation}
The first  and second order derivatives of the functional are given by 
\begin{equation*}
\partial J(u)v
= 
\frac{2}{k^{\frac{n-2}{n}}}
\big[\int L_{g_{0}}uvd\mu_{g_{0}}-\frac{r}{k}\int Ku^{\frac{n+2}{n-2}}vd\mu_{g_{0}}\big],
\end{equation*} 
hence and in particular  \eqref{the_equation} has variational structure, and
\begin{equation*}
\begin{split}
\partial^{2} J(u) vw
= &
\frac{2}{k^{\frac{n-2}{n}}}
\big[\int L_{g_{0}}vwd\mu_{g_{0}}-\frac{n+2}{n-2}\frac{r}{k}\int Ku^{\frac{4}{n-2}}vwd\mu_{g_{0}}\big] \\
& -
\frac{4}{k^{\frac{n-2}{n}+1}}
\big[
\int L_{g_{0}}uvd\mu_{g_{0}}\int Ku^{\frac{n+2}{n-2}}wd\mu_{g_{0}}
+
\int L_{g_{0}}uwd\mu_{g_{0}}\int Ku^{\frac{n+2}{n-2}}vd\mu_{g_{0}}
\big] \\
& +
\frac{2(\frac{n+2}{n-2}+3)r}{k^{\frac{n-2}{n}+2}}
\int Ku^{\frac{n+2}{n-2}}vd\mu_{g_{0}}\int Ku^{\frac{n+2}{n-2}}wd\mu_{g_{0}}.
\end{split}
\end{equation*}
Note, that  $J$ is of class $C^{2, \alpha}(\mathcal{A} \cap \{c^{-1}<k<c\})$ for every $c>1$ and that  the scalar product
\begin{equation*}
\langle u,w \rangle=\langle u,w \rangle_{L_{g_{0}}}=\int L_{g_{0}}uwd\mu_{g_{0}}
\end{equation*}
induces the gradient $ \nabla J=\nabla_{L_{g_{0}}}J$, i.e. 
$$
\langle \nabla J(u),w \rangle
=
\langle \nabla_{L_{g_{0}}} J(u),w \rangle_{L_{g_{0}}}
=
\partial J(u)w
$$
or in other words $ \nabla_{L_{g_{0}}} J(u)=L^{^{-}}_{g_{0}}\partial J(u) $ with $ L_{g_{0}}^{^{-}} $ denoting the inverse to  
\begin{equation*}
\begin{split}
L_{g_{0}}
:
W^{1,2}(M,g_{0})
\longrightarrow 
W^{-1,2}(M,g_{0})
\end{split}
\end{equation*}
mapping $ W^{1,2}(M,g_{0}) $ to its dual.  Likewise and for the sake of brevity let us also write 
\begin{equation*}
\begin{split}
\nabla^{k} K=\nabla^{k}_{g_{0}}K
\; \text{ for } \; k\geq 1
\; \text{  and } \;
\Delta K=\Delta_{g_{0}} K.
\end{split}
\end{equation*}

\subsection{Sub\,- and criticality}  
Let us review first the subcritical, non degenerate case.
\begin{definition}
We call a positive Morse function $K$ on $M$ non degenerate for  $n\geq 5$, if 
\begin{equation}\label{nd} 
\{\vert \nabla K \vert   = 0 \}\cap \{\Delta K = 0 \}= \emptyset. 
\end{equation}
 \end{definition}
 
We will always assume this non degeneracy, under which in \cite{MM1} and \cite{MM2} we proved for the subcritical approximation to \eqref{the_equation}, i.e.  
\begin{equation}\label{subcritical_equation}
 L_{g_{0}}u=Ku^{\frac{n+2}{n-2}-\tau}\; \text{ with }\;  0<\tau \longrightarrow 0
\end{equation} 
and subcriticality is understood with respect to Sobolev's embedding,
the following uniqueness and existence result. 
Clearly $J=J_{0}$. 
\begin{thm}[\cite{MM1},\cite{MM2}]\label{t:ex-multi}
Let $(M,g)$ be a compact manifold of dimension $n \geq 5$ with positive Yamabe invariant and   
$K : M \longrightarrow \R$ be a positive Morse function satisfying \eqref{nd}. 
Let $x_1, \dots, x_q$ be distinct critical points of $K$ with negative Laplacian.

Then there exists, as $\tau \longrightarrow 0$ and up to scaling,  a unique solution $u_{\tau, x_1, \dots, x_q}$
to \eqref{subcritical_equation} developing a simple bubble  
at each  $x_i$ and converging weakly to zero as $\tau \longrightarrow 0$. 
Moreover and  up to scaling 
$$
m(J_\tau, u_{\tau, x_1, \dots, x_q}) 
=
(q-1) + \sum_{i=1}^q 
(n-m(K,x_i)).
$$ 
Conversely  all blow-up solutions  of uniformly bounded energy and zero weak limit type
are as above.   
\end{thm}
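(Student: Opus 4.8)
The plan is to attack this via a finite-dimensional reduction adapted to the subcritical problem \eqref{subcritical_equation}, following the classical Bahri--Coron scheme. First I would parametrize the relevant region of $\mathcal{A}$ by standard bubbles: for $a \in M$ and $\lambda$ large, let $\varphi_{a,\lambda} = u_a^{-1}\delta_{a,\lambda}$ (the rescaled Green-adapted bubble associated to the conformal metric $g_a$), and for $q$ such points form the set $V(q,\varepsilon)$ of functions close in norm to a sum $\sum_{i=1}^q \alpha_i \varphi_{a_i,\lambda_i}$ with concentration parameters $\lambda_i \geq \varepsilon^{-1}$, mutual interaction $\varepsilon_{ij} \leq \varepsilon$, and $\alpha_i$ near the value forced by criticality. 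One shows that any zero-weak-limit bounded-energy solution of \eqref{subcritical_equation} eventually lies in some $V(q,\varepsilon)$ (this is the standard profile decomposition / Struwe-type bubbling, which is where the positive Yamabe hypothesis and $n\geq 5$ enter to control the sign and the remainder). The heart is then the expansion of $J_\tau$ on $V(q,\varepsilon)$.

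Second, I would carry out the asymptotic expansion of $J_\tau\!\left(\sum \alpha_i \varphi_{a_i,\lambda_i}\right)$ in the parameters. The leading term is the sum of individual bubble energies; the corrections involve (a) the self-interaction term producing $\frac{H_{a_i}(a_i)}{\lambda_i^{n-2}}$, (b) the curvature term producing $\frac{\Delta K(a_i)}{\lambda_i^2}$ (up to a positive constant) plus $\frac{\nabla K(a_i)}{\lambda_i}$, (c) the mutual interactions $\varepsilon_{ij} \sim (\lambda_i\lambda_j d(a_i,a_j)^2)^{-(n-2)/2}$, and (d) the subcritical correction, of order $\tau$, which upon differentiation in $\lambda_i$ behaves like $+\tau \ln \lambda_i$ and is what makes the $\lambda_i$-direction non-degenerate and forces the $a_i$ to sit near critical points of $K$ with $\Delta K < 0$ (so that the $\lambda_i^{-2}$-term has the cooperating sign). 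Using \eqref{nd}, near a critical point $\nabla K$ vanishes to exactly first order and $\Delta K \neq 0$, so the $a_i$-variable is controlled by a nondegenerate quadratic form. I would then run the standard implicit-function-theorem reduction: solve away the infinite-dimensional component $w \perp \mathrm{span}\{\varphi_{a_i,\lambda_i}, \partial_{\lambda_i}\varphi, \partial_{a_i}\varphi\}$ (invertibility of $\partial^2 J_\tau$ transverse to the bubble manifold is guaranteed by the positivity of $L_{g_0}$ and the non-degeneracy of the standard bubble modulo its own invariances), reducing to a function $\bar J_\tau$ of $(\alpha_i, a_i, \lambda_i)$.

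Third, I would analyze the reduced finite-dimensional functional. Its critical points, after the scaling normalization that removes the overall dilation invariance (this is the ``up to scaling'' in the statement, and accounts for the $(q-1)$ in the Morse index), correspond via the reduction to genuine solutions of \eqref{subcritical_equation}. The gradient equations in $a_i$ pin each $a_i \to x_i$ a critical point of $K$; the equations in $\lambda_i$ together with the $\tau\ln\lambda_i$ term fix $\lambda_i$ asymptotically (forcing $\Delta K(x_i) < 0$ and $\lambda_i^{n-2}\sim c/\tau$ type behavior, with the interaction terms $\varepsilon_{ij}\to 0$ so the bubbles genuinely separate); and the $\alpha_i$-equations fix the heights. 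Non-degeneracy of this reduced system — again from \eqref{nd} and the strict concavity/convexity in $\lambda$ and $a$ — gives both \emph{uniqueness} of the critical point (for each fixed ordered tuple $(x_1,\dots,x_q)$, and $\tau$ small) and lets me compute the Morse index by counting: one negative direction per $\lambda_i$ (from the $\Delta K<0$ sign), $m(K,x_i)$ negative directions in each $a_i$-variable replaced by its complement $n - m(K,x_i)$ after accounting for orientation of the bubble expansion, $q-1$ from the relative dilation directions, and zero from the $w$-direction and $\alpha$-direction (which are positive-definite). Summing gives $m(J_\tau, u_{\tau,x_1,\dots,x_q}) = (q-1) + \sum_{i=1}^q (n - m(K,x_i))$. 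The converse (``all zero-weak-limit bounded-energy blow-up solutions are of this form'') follows because the profile decomposition forces membership in some $V(q,\varepsilon)$ and then the uniqueness in the reduction leaves no other possibility.

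The main obstacle I expect is twofold. The first technical difficulty is getting the expansion of $J_\tau$ on $V(q,\varepsilon)$ sharp enough: one needs the error terms to be genuinely lower order than both $\tau$ and $\lambda_i^{-2}$ simultaneously, which in dimension $5$ and $6$ is delicate because of the slow decay / log behavior of $H_{s,a}$ recorded in the excerpt; handling the conformal-normal-coordinate choice $g_a$ so that the unwanted curvature contributions vanish and only $\Delta K$ survives is essential here. The second, more conceptual obstacle is the Morse index bookkeeping: correctly identifying which directions are negative requires a careful simultaneous diagonalization of the Hessian of $\bar J_\tau$ in the coupled $(a_i,\lambda_i)$ block, using \eqref{nd} to ensure the cross terms do not destroy the count — this is exactly the point where the subcritical $\tau\ln\lambda$ term is indispensable, since at $\tau = 0$ the $\lambda_i$-directions would be degenerate and the index formula would fail.
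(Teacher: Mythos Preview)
Your overall plan --- profile decomposition into $V(q,\varepsilon)$, Lyapunov--Schmidt reduction transverse to the bubble manifold, then analysis of the reduced finite-dimensional functional --- matches the approach of \cite{MM1}, \cite{MM2} as summarized in the paper immediately after the statement: test $\partial J_\tau$ along $\varphi_i,\lambda_i\partial_{\lambda_i}\varphi_i,\frac{\nabla_{a_i}}{\lambda_i}\varphi_i,v$ to force \eqref{principal_a_i_behaviour}, then use $\partial^2 J_\tau$ for existence, uniqueness and the index. The paper does not reprove the theorem here, so your proposal is a reasonable reconstruction of that sketch.

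Your Morse-index bookkeeping, however, is wrong in two places. First, the $\lambda_i$-directions are \emph{positive}, not negative: with $\Delta K(x_i)<0$ the energy carries a term $+\hat c_2\,|\Delta K_i|/\lambda_i^2$ (decreasing in $\lambda_i$) balanced against $+c\,\tau\ln\lambda_i$ (increasing), so the critical $\lambda_i$ is a \emph{minimum} and contributes nothing to the index. Second, the $\alpha$-block is \emph{not} positive-definite: the scale-invariant map
\[
\alpha\longmapsto\frac{\sum_i\alpha_i^2}{\big(\sum_i K_i\alpha_i^{2n/(n-2)}\big)^{(n-2)/n}}
\]
has its interior critical point a \emph{maximum} on the simplex (since $2n/(n-2)>2$), so after removing the overall scaling the $\alpha$-block contributes exactly $q-1$ negative directions. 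Your separate ``$q-1$ from relative dilation'' is then either a double count or a misattribution (the scaling invariance $u\mapsto cu$ acts on the $\alpha_i$, not the $\lambda_i$); as written your tally is $2q-1+\sum_i(n-m(K,x_i))$. The correct breakdown is: $q-1$ from $\alpha$, $\sum_i(n-m(K,x_i))$ from the $a_i$ (since $J_\tau$ depends on $K(a_i)$ with a negative sign), zero from each $\lambda_i$, zero from $v$.

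A smaller slip: balancing $\tau$ against $|\Delta K|/\lambda^2$ in the $\lambda$-equation gives $\lambda_i^{2}\sim c/\tau$, not $\lambda_i^{n-2}\sim c/\tau$.
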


Here  $m=m(\cdot,\cdot)$ denotes the Morse index, while blow-up refers to local concentration
\begin{equation*}
\begin{split}
\forall\; 0<\varepsilon\ll 1 \;\exists\; \lambda_{\tau}\longrightarrow \infty
:
\sup_{x\in M} \int_{B_{\frac{1}{\lambda_{\tau}}}(x)}\vert \nabla u_{\tau}\vert^{2}
\geq \varepsilon  
\end{split}
\end{equation*}
of solutions $ u_{\tau}\in \{ \partial J_{\tau}=0 \}  $, cf. (3.2) of Proposition 3.1 in \cite{MM1},  
and simple bubbling to 
\begin{equation*}
\forall \; i=1,\ldots, q \; 
\exists\; R_{\tau}\xlongrightarrow{\tau\to 0} \infty\;:\;
\Vert 
u_{\tau,x_{1},\ldots,x_{q}} - \alpha_{i} \varphi_{a_{i},\lambda_{i} } 
\Vert_{W^{1,2}(B_{\frac{R_{\tau}}{\lambda_{\tau}}})}
\xrightarrow{\tau \to \infty} 0.
\end{equation*} 
Precisely and with a multiplicative constant $\Theta$
reflecting the scaling invariance of $J_{\tau}$   
\begin{equation}\label{Theta}
\alpha_{j}=\frac{\Theta}{K(x_{j})^{\frac{n-2}{4}}}+o_{\tau}(1)
,\quad  
a_{j} \xrightarrow{\tau\to 0} x_j
\; \text{ and } \;
\lambda_{j} \simeq \tau^{- \frac{1}{2}}
,
\end{equation}
cf. Remark 1.1 in \cite{MM1}.
Finally the functions
\begin{equation}\begin{split}\label{eq:bubbles}
\varphi_{a, \lambda }
= &
u_{ a }\left(\frac{\lambda}{1+\lambda^{2} \gamma_{n}G^{\frac{2}{2-n}}_{ a }}\right)^{\frac{n-2}{2}}  
\; \text{ with }\;  
G_{ a }=G_{g_{ a }}( a, \cdot)
\; \text{ and } \;
\gamma_{n}=(4n(n-1)\omega _{n})^{\frac{2}{2-n}},
\end{split}\end{equation}
to which we refer as bubbles,
are zero weak limit \textit{almost solutions} to \eqref{the_equation}, precisely
$$ 
\varphi_{a,\lambda}\xrightharpoondown{\lambda\to \infty } 0
\; \text{ weakly with } \;
\limsup_{\lambda\to \infty}J(\varphi_{a,\lambda})<\infty
\; \text{ and } \;
\Vert \partial J(\varphi_{a,\lambda}) \Vert \xrightarrow{\lambda\to \infty} 0
$$ 
uniformly, cf. \eqref{def_PS} and Lemma \ref{lem_emergence_of_the_regular_part}, hence also for $\eqref{subcritical_equation}$. We refer to Section \ref{section_preliminaries} for precise statements.

\

The proof of Theorem \ref{t:ex-multi} is  based on considering the variational functional
\begin{equation*}
J_{\tau}(u)
=
\frac
{\int  L_{g_{0}}uu d\mu_{g_{0}}}
{(\int Ku^{p+1}d\mu_{g_{0}})^{\frac{2}{p+1}}}
\; \text{ for }\;
 u \in \mathcal{A}
 \; \text{ and with }\; 
  p=\frac{n+2}{n-2}-\tau
\end{equation*} 
corresponding to \eqref{subcritical_equation}. 
As $ J_{\tau} $ is scaling invariant, we may restrict to
\begin{equation}\label{the_variational_space}
X=\{0\leq u \in W^{1,2}(M) \mid u\neq 0,\; \Vert u \Vert=1\}
\end{equation}
and consider $ X $ as the variational space of $J_{\tau}$. 
A preliminary blow up analysis of zero weak limit Palais-Smale sequences, i.e. 
\begin{equation*}
\begin{split}
X \supset (u_{m})\xrightharpoondown{m\to \infty } 0
\; \text{ weakly with } \;
\limsup_{m\to \infty}J_{\tau}(u_{m})<\infty
\; \text{ and } \;
\Vert \partial J_{\tau}(u_{m}) \Vert \xrightarrow{m\to \infty} 0, 
\end{split}
\end{equation*}
then shows, that  every zero weak limit Palais-Smale sequence has to be of the form of a finite sum
\begin{equation}\label{bubbling}
u=\alpha^{i}\varphi_{a_{i},\lambda_{i}} + v,\;\Vert v \Vert \ll 1 
\end{equation}
with 
\begin{enumerate}[label=(\roman*)]
\item 	scaling parameters $c<\alpha_{i}<C$ 
\item   high concentrations $\lambda_{i}\longrightarrow \infty$
\item   small error $ \Vert v \Vert\longrightarrow 0 $.
\end{enumerate} 
Vice versa such functions induce zero weak limit type Palais-Smale sequences. 
This representation
is  rendered unique by means of a minimisation problem, which provides certain orthogonality relations, when
testing the derivative  
$\partial J_{\tau}$ 
along 
$$
\varphi_{i}
,
\lambda_{i}\partial_{\lambda_{i}}\varphi_{i}
,
\frac{\nabla_{a_{i}}}{\lambda_{i}}\varphi_{i}
\; \text{ and } \;
v.
$$
Then a sophisticated combination of such testings provides a  lower bound on  $\vert \partial J_{\tau}\vert$, which in return reduces to a high degree the possible configurations of the parameters
$\alpha_{i},\lambda_{i}$ and $a_{i}$ for zero weak limit blow-up solutions. In particular and necessarily
\begin{equation}\label{principal_a_i_behaviour}
a_{i}\longrightarrow x_{i}\in \{\vert \nabla K \vert =0\}\cap \{\Delta K <0\}
\; \text{ as } \; 
\tau\longrightarrow 0
\; \text{ and }\; 
x_{i}\neq x_{j} 
\; \text{ for }\; 
i\neq j,
\end{equation} 
thus excluding tower bubbling, i.e. $ x_{i}=x_{j} $ for some $ i\neq j $.     
Finally in \cite{MM2} and  based on calculations of the second derivative $\partial^{2}J$ the sharpness of  \eqref{principal_a_i_behaviour} is established, meaning  that for every 
\begin{equation*}
\{x_{1},\ldots ,x_{q}\}  \subseteq \{\vert \nabla K \vert =0\}\cap \{\Delta K <0\}
\end{equation*}
there exists a unique solution $u\in \{\partial J_{\tau}=0\}$ of type 
$u=\sum_{i}\alpha_{i}\varphi_{a_{i},\lambda_{i}} + v$ with
\begin{equation}\label{subcritical_principal_behaviour}
\frac{1}{\lambda_{i}}
,
d(a_{i},x_{i})
,
\vert 1-\frac{\sum_{j}\alpha_{j}^{2}}{\sum_{j}K(a_{j})\alpha_{j}^{\frac{2n}{n-2}}}K(a_{i})\alpha_{i}^{\frac{4}{n-2}}\vert,
\Vert v \Vert 
\longrightarrow 0
\; \text{ as } \; 
\tau\longrightarrow 0
\end{equation}
and the latter convergence is understood to a high degree
in $\tau$. Hence Theorem \ref{t:ex-multi}.

\

Taking also the scaling invariance of $J$ into account we consider again $ X $, cf. \eqref{the_variational_space}, 
as the variation space of $ J=J_{0} $   
and will in the present work construct a semi-flow
\begin{equation*}
\Phi: \R_{\geq 0}  \times X \longrightarrow X
\end{equation*}
rigorously defined in Section \ref{section_flow_construction},
which decreases the energy $J$, and study its zero weak limit flow lines. 

\

Roughly speaking and in analogy to the subcritical case the corresponding parabolic blow-up analysis and unique representation lead to the description \eqref{bubbling} of zero weak limit non compact flow lines for e.g. the strong gradient flow. In particular non compactness of a flow line corresponds to 
$ \forall_{i}\lambda_{i}\longrightarrow \infty $ for at least a sequence in time.  Then the flow $ \Phi $, while preserving $ X $,  is finely tuned to a careful evaluation and combination of testings of the derivative $ \partial J $ with the scope to increase $ \max_{i} \lambda_{i} $ along $ \Phi $ as little as possible, whenever a flow line is of type \eqref{bubbling},
while moving along the strong gradient flow otherwise.
In particular this allows us  
to show, that in analogy to \eqref{principal_a_i_behaviour} 
along zero weak limit flow lines of $ \Phi $ there necessarily holds 
\begin{equation*}
a_{i}\longrightarrow x_{i} \in \{\vert \nabla K \vert =0\}\cap \{\Delta K <0\}
\; \text{ as} \; 
t\longrightarrow \infty
\; \text{ and } \; 
x_{i}\neq x_{j}\; \text{ for } \; i\neq j.
\end{equation*}
And again in analogy to Theorem \ref{t:ex-multi}
we show, that for every
\begin{equation*}
\{x_{1},\ldots ,x_{q}\}  \subseteq \{\vert \nabla K \vert =0\}\cap \{\Delta K <0\} 
\end{equation*}
there exists a flow line  
$$u(t)=\Phi(t,u_{0})
\; \text{ of type } \;
u=\sum_{i}\alpha_{i}\varphi_{a_{i},\lambda_{i}} + v$$ 
with
\begin{equation*} 
\frac{1}{\lambda_{i}}
,\;
d(a_{i},x_{i})
,\; 
\vert 1-\frac{\sum_{j}\alpha_{j}^{2}}{\sum_{j}K(a_{j})\alpha_{j}^{\frac{2n}{n-2}}}K(a_{i})\alpha_{i}^{\frac{4}{n-2}}\vert,\;
\Vert v \Vert 
\longrightarrow 0
\; \text{ as } \;  
t \longrightarrow \infty
\end{equation*} 
exponentially fast, cf. \eqref{subcritical_principal_behaviour}. 

\ 
 
Consequently zero weak limit flow lines for this energy decreasing flow $\Phi$ and finite energy zero weak limit subcritical blow-up solutions display the same limiting behaviour. And, since from the computation of the second derivative $\partial^{2} J_{\tau}$ at the latter subcritical blow-up solutions the induced change of topology of  sublevel sets is known according to their Morse index, 
the same change of topology is induced by corresponding  \textit{critical points at infinity}, for whose definition we refer to \cite{bab_calc_var} and Section \ref{section_critical_points_at_infinity}.
\begin{thm}\label{thm}
Let $(M,g)$ be a compact manifold of dimension $n \geq 5$ with positive Yamabe invariant  and 
let $K : M \longrightarrow \R$ be a positive Morse function satisfying \eqref{nd}.    
Let $x_1, \dots, x_q$ be distinct critical points of $K$ with negative Laplacian. 

Then there exists up to scaling a unique critical point at infinity  $c=u_{\infty, x_1, \dots, x_q}$ for $ J $ of zero weak limit, energy decreasing type exhibiting  a simple peak 
at each point $x_i$.
Moreover $c=u_{\infty, x_1, \dots, x_q}$ has index  
$$\ind(J, u_{\infty, x_1, \dots, x_q}) = (q-1) + \sum_{i=1}^q 
(n-m(K,x_i)).$$   
Conversely all critical points at infinity of energy decreasing  and zero weak limit type
are as above. 
\end{thm}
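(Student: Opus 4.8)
The plan is to transfer, line by line, the subcritical analysis behind Theorem~\ref{t:ex-multi} to the energy decreasing semi-flow $\Phi$, replacing zero weak limit Palais--Smale sequences of $J_\tau$ by zero weak limit flow lines of $\Phi$. Two tools are reused verbatim: first the blow-up analysis, which forces every zero weak limit finite energy configuration into the unique representation $u=\sum_i\alpha_i\varphi_{a_i,\lambda_i}+v$ with $c<\alpha_i<C$, $\lambda_i\gg1$, $\Vert v\Vert\ll1$ and the orthogonalities that make it unique; second the combined testing of $\partial J$ along $\varphi_i$, $\lambda_i\partial_{\lambda_i}\varphi_i$, $\lambda_i^{-1}\nabla_{a_i}\varphi_i$ and $v$, which yields a sharp lower bound on $\vert\partial J\vert$ away from the configuration \eqref{principal_a_i_behaviour}. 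Since $\Phi$ is, by construction, finely tuned so that its tangent field agrees to leading order with the pseudo-gradient assembled from exactly these directions, both tools pass to flow lines. The hypothesis ``zero weak limit'' removes any positive base solution from the representation, and ``energy decreasing type'' is precisely what places us in the critical-point-at-infinity theory attached to $\Phi$, as recalled in Section~\ref{on_the_notion}.

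\emph{Necessity (the converse statement).} Let $u(t)=\Phi(t,u_0)$ be a zero weak limit flow line of bounded energy. As $J$ decreases along $\Phi$ and stays in an energy range whose only loss of compactness is bubbling, $u(t)$ eventually remains near a finite bubble cluster, so the representation applies. Feeding it into the gradient testing and integrating the energy dissipation against the lower bound for $\vert\partial J\vert$ drives the parameters into \eqref{principal_a_i_behaviour}: the $\lambda_i^{-1}\nabla_{a_i}$-testing forces $\nabla K(a_i)\to0$; the $\lambda_i\partial_{\lambda_i}$-testing, together with the regular part appearing in the expansion of $\varphi_{a,\lambda}$ in Lemma~\ref{lem_emergence_of_the_regular_part} (subdominant for $n\geq5$ against the $\Delta K$-term), forces $\Delta K(a_i)<0$ in the limit and fixes the $\alpha$-balancing; and the same expansion makes the bubbles mutually repulsive and precludes tower bubbling, so that $a_i\to x_i$ with $x_i\neq x_j$ and $x_i\in\{\vert\nabla K\vert=0\}\cap\{\Delta K<0\}$, while $\Vert v\Vert\to0$. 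Hence every such flow line is asymptotic to one of the candidate critical points at infinity indexed by a set $\{x_1,\dots,x_q\}\subseteq\{\vert\nabla K\vert=0\}\cap\{\Delta K<0\}$; the exponential rate is obtained a posteriori from the non-degeneracy established below, by linearising the reduced flow near the corresponding hyperbolic fixed point.

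\emph{Existence, uniqueness, Morse index and non-degeneracy.} Fix $\{x_1,\dots,x_q\}\subseteq\{\vert\nabla K\vert=0\}\cap\{\Delta K<0\}$. On the chart given by the $v$-orthogonal representation the $\partial^2J$-computation, carried out as the $\partial^2J_\tau$-computation of \cite{MM2}, shows the $v$-block to be uniformly positive definite; hence $v$ is a slave variable contracted by $\Phi$ and the flow reduces to a shadow system in $(\alpha_i,\lambda_i,a_i)$ with potential $J(\sum_i\alpha_i\varphi_{a_i,\lambda_i})$, which on the slice $\sum_i\alpha_i^2\Vert\varphi_{a_i,\lambda_i}\Vert^2=1$ expands as
\begin{equation*}
J\left({\textstyle\sum_{i}}\,\alpha_i\varphi_{a_i,\lambda_i}\right)\approx\mathrm{const}\cdot\left({\textstyle\sum_{i}}K(a_i)\alpha_i^{2n/(n-2)}+c\,{\textstyle\sum_{i}}\Delta K(a_i)\alpha_i^{2n/(n-2)}\lambda_i^{-2}+\ldots\right)^{-(n-2)/n},\quad c>0,
\end{equation*}
the dots being of lower order. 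This potential has a unique critical point at infinity at $a_i=x_i$, $\lambda_i=\infty$ and $\alpha_i$ balanced by the condition in \eqref{subcritical_principal_behaviour}, and it is non-degenerate there, with the following signature. The $a_i$-block is the Hessian of the Morse function $K$ at $x_i$; since $J$ decreases as $K(a_i)$ increases, its $n-m(K,x_i)$ positive directions for $K$ are the negative ones for $J$, contributing $n-m(K,x_i)$. The $1/\lambda_i$-block is positive definite, as the expansion reads $J_0(1+\sum_ic_i\lambda_i^{-2}+\ldots)$ with each $c_i>0$ proportional to $K(x_i)^{-1}\vert\Delta K(x_i)\vert$ because $\Delta K(x_i)<0$, so moving to finite $\lambda_i$ raises $J$ and the contribution is $0$. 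The $\alpha$-block goes through $\sum_iK(x_i)\alpha_i^{2n/(n-2)}$, whose exponent exceeds $2$ and whose Hessian on the sphere $\{\sum_i\alpha_i^2=\mathrm{const},\ \alpha_i>0\}$ at the balanced point is a positive multiple of the identity; this is a strict minimum of the denominator, hence a strict maximum of $J$ in those $q-1$ directions, contributing $q-1$. The $v$-block contributes $0$. Summing, $m(J,u_{\infty,x_1,\dots,x_q})=(q-1)+\sum_{i=1}^q(n-m(K,x_i))$ and $u_{\infty,x_1,\dots,x_q}$ is non-degenerate. Consequently the stable manifold of this critical point at infinity has codimension equal to this index; starting $u_0$ on it gives a flow line with $\Phi(t,u_0)\to u_{\infty,x_1,\dots,x_q}$ exponentially, which is the asserted existence, and uniqueness up to scaling follows because, by the necessity step, any zero weak limit critical point at infinity with peaks exactly at $\{x_1,\dots,x_q\}$ has its parameters converging to this same balanced configuration. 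Finally, as the resulting index and $\partial^2J$ agree with those of the subcritical solution $u_{\tau,x_1,\dots,x_q}$ of Theorem~\ref{t:ex-multi} (see \cite{MM2}), the change of topology of sublevel sets induced across $u_{\infty,x_1,\dots,x_q}$ equals the one induced by the subcritical blow-up solution.

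\emph{Main obstacle.} The hard part is the existence half: one must build $\Phi$ so that it realises the pseudo-gradient directions uniformly across all relevant bubble clusters (the fine tuning), then perform the finite-dimensional reduction at the level of the flow and not merely of the functional, keeping the $v$-corrections in $\partial^2J$ under control, and finally solve the reduced shadow dynamics with a genuine exponential rate rather than only showing that $\vert\partial J\vert$ is small near the configuration. The bookkeeping that makes the inter-bubble interactions subdominant for $n\geq5$ and the $\alpha$-count cleanly equal to $q-1$ is the technically heaviest ingredient.
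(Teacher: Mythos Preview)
Your necessity argument mirrors the paper's Proposition~\ref{prop_tower_bubble_exclusion} and is fine at the level of outline, though the paper actually derives the exclusion of tower bubbles and the sign condition $\Delta K(x_i)<0$ from the explicit structure of the flow in Definition~\ref{def_flow} (in particular the monotonicity built into the $\lambda$-movement via the weights $m_{\lambda_j,\lambda_i}$), not from an abstract energy-dissipation integration against $\vert\partial J\vert$. That difference matters for the exponential rate, which in the paper comes directly from $(\lambda)$ and not from a posteriori linearisation.

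Where you genuinely diverge from the paper is in the existence, non-degeneracy and Morse index part. You compute $m(J,u_{\infty,x_1,\ldots,x_q})$ by block-diagonalising $\partial^2 J$ at infinity into $a$-, $\lambda$-, $\alpha$- and $v$-blocks and reading off the signature of each. The paper does \emph{not} do this. Instead it imports the subcritical solution $u^\tau_{x_1,\ldots,x_q}$ from \cite{MM2}, shows that for $0<\tau\ll\varepsilon$ it is the unique critical point of $J_\tau$ in $V(q,\varepsilon)\cap\{d(a_i,x_i)\ll1\}$, uses its known Morse index to attach a cell of the correct dimension to $J_\tau^{J_{x_1,\ldots,x_q}-\delta}$, and then transfers this to $J$ via the inclusion $J_\tau^s\subset J^s$ (H\"older) and a relative cellular homology computation. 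Since $\{\partial J=0\}\cap V(q,\varepsilon)=\emptyset$ by Proposition~\ref{prop_gradient_bounds}, the observed change of topology of the $J$-sublevels must be caused by a critical point at infinity, which simultaneously verifies Definition~\ref{definition_critical_point_at_infinity} and pins down the index.

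Your direct route is in the spirit of Bahri's original computations and is not wrong, but it leaves two things implicit that the paper's homological comparison handles cleanly. First, Definition~\ref{definition_critical_point_at_infinity} requires that no pseudo-gradient whatsoever can push a neighbourhood past the limiting level; exhibiting a non-compact flow line for $\Phi$ and a formal Morse signature is not the same as exhibiting an obstruction to deformation, and you do not bridge that gap (the paper does, via $H_{m_{x_1,\ldots,x_q}}(\cdot,\cdot)\neq0$). Second, your block decomposition presupposes that the mixed second derivatives (e.g.\ $\alpha$--$\lambda$ and $a$--$\lambda$ couplings, and the coupling of the finite-dimensional parameters to $v$) are genuinely subdominant at the critical point at infinity; this is true but is exactly the content of the $\partial^2 J_\tau$ analysis in \cite{MM2}, which the paper invokes rather than redoes. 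So what the paper buys by going through the subcritical comparison is a rigorous verification of both the obstruction and the index without having to redo a second-variation analysis at infinity; what your approach would buy, if completed, is independence from the subcritical theory.
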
   

\begin{figure}[h!]
\centering
\includegraphics[scale=0.8]{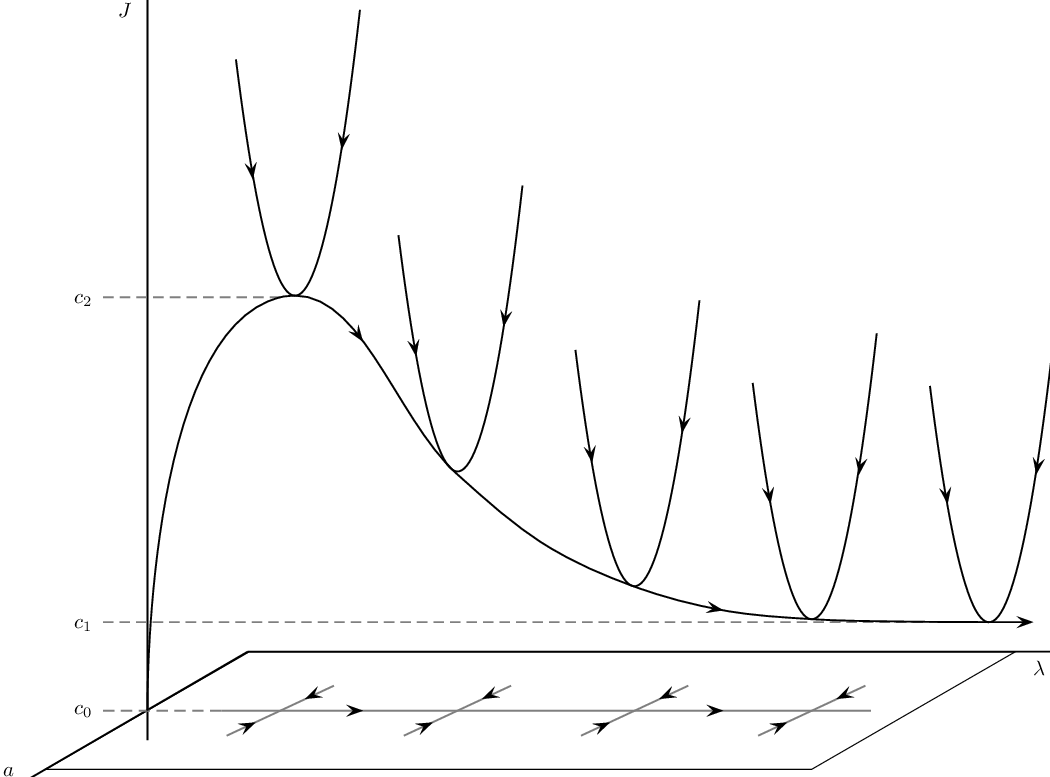} 
\caption{Any point $(a,\lambda) \in J^{c_{2-\delta}}$ with $ \vert a \vert\ll 1 \ll\lambda  $ inevitably flows towards $(0,\infty)$}
\label{Fig_cp_at_infty_pic_3} 
\centering
\end{figure}

Let us discuss the terminology and the practical impact on how to proceed. 
Consider in analogy to the negative gradient flow an 
\textit{energy decreasing} deformation $F $ generated by 
$\partial_{t}u= G(u)$ with
\begin{equation}\label{general_form_of_a_flow}
\begin{split}
G\in C^{0,1}_{loc}(X,TX)
\; \text{ such, that }\;
\partial J(u) G(u) \longrightarrow  0
 \Longrightarrow 
\vert \partial J(u) \vert \longrightarrow 0   
\end{split}
\end{equation}
Independently of a particular choice of $ G $  we then see, that
\begin{enumerate}
\item[(1)]
for every flow line due to  energy consumption necessarily  
\begin{equation}\label{gradient_to_zero}
\vert \partial J(u) \vert \longrightarrow  0
\end{equation}
up to a subsequence in time. Hence and generally one and only one of the three possibilities 
\begin{enumerate}[label=(\roman*)]
\item \quad $\Vert u \Vert\longrightarrow \infty$
\item \quad $ u \xrightharpoondown{\quad} u_{\infty} $ weakly, but 
$ \Vert u-u_{\infty}\Vert \hspace{4pt}\not \hspace{-4pt}\longrightarrow  0$ 
\item \quad
$u\longrightarrow u_{\infty}\in  \{ \partial J=0 \}$ strongly.  
\end{enumerate}
occurs up to a subsequence in time for every flow line. 
In cases (i) or (ii) we say, 
that $ u $ tends to leave the variational space 
or escapes to \textit{infinity}, see Figure \ref{Fig_cp_at_infty_pic_3} for an illustration.
While (i) may occur in case of the two dimensional analogon to the prescribed scalar curvature problem, i.e. the Gaussian one, in our setting  (i) is ruled out, 
as $ X\subset \mathcal{A} $ is bounded. 
However, since energy is decreased, and by virtue of \eqref{gradient_to_zero} every flow line constitutes up to a subsequence in time a Palais-Smale sequence.

\item[(2)] 
on the other hand an analysis of arbitrary Palais-Smale sequences shows, that 
\begin{equation*}
\begin{split}
u
= &
u_{\infty}+\sum_{i=1}^{q}\alpha_{i}\varphi_{a_{i},\lambda_{i}}+v
\; \text{ for some  } \; q\in \N 
\end{split}
\end{equation*}
up to a subsequence in time  eventually, where among other properties
\begin{enumerate}
\item[($0$)] \quad  either $ 0<u_{\infty}\in \{ \partial J=0 \}$ is a solution or $ u_{\infty}=0 $ and $ q\geq 1 $ 
\item[($v$)] \quad $ \Vert v \Vert \longrightarrow 0$ is a vanishing perturbation
\item[($\lambda$)] \quad $ \; \forall_{i} \; \lambda_{i}\longrightarrow \infty $ and $ \varphi_{a_{i},\lambda_{i}} \xrightharpoondown{\lambda_{i}\to \infty} 0 $ weakly.
\end{enumerate}
Note, that in case and only in case $ u_{\infty}=0 $ we have
\begin{equation*}
\begin{split}
u \xrightharpoondown{\quad} 0 
\; \text{ weakly, while} \;
\Vert \alpha^{i}\varphi_{a_{i},\lambda_{i}} \Vert \longrightarrow 0
\; \text{ is impossible, } \;
\end{split}
\end{equation*}
since $ \Vert \cdot \Vert=1 $ on $ X $ by definition. In other words $ u $ is of \textit{zero weak limit }  type and $ u $ tends to leave the variational space $ X $, i.e. escapes to \textit{infinity}, via  
\begin{equation}\label{endconfiguration}
\begin{split} 
\Vert 
u -\alpha^{i}\varphi_{a_{i},\lambda_{i}}\Vert
\longrightarrow 0
\; \text{ with } \;
\alpha_{i}\longrightarrow \alpha_{i,\infty} \in \R_{+},
\;
a_{i}\longrightarrow a_{i,\infty}\in M
\; \text{ and } \;
\lambda_{i}\longrightarrow \infty 
\end{split}
\end{equation} 
up to a subsequence in time at least.

\item[(3)]
based on an energy consumption argument relying on lower bound estimates on $ \vert \partial J \vert $,  if 
\begin{equation*}
\begin{split}
u
= &
\alpha^{i}\varphi_{a_{i},\lambda_{i}}+v 
\; \text{ with } \; c<\alpha_{i}<C,\; \lambda_{i}\longrightarrow \infty
\; \text{ and  } \; \Vert v \Vert\longrightarrow 0
\end{split}
\end{equation*}
up to a subsequence in time, then also eventually, i.e. for $ t\longrightarrow \infty $, and 
\begin{equation*}
\begin{split}
\vert \partial J(u)\vert
\longrightarrow 0
\; \text{ as } \;
t\longrightarrow \infty.
\end{split}
\end{equation*}
This is to say, that $ u $ does not only escape to \textit{infinity}, but also becomes \textit{critical at infinity}. A priori however this does not imply a unique limiting profile of type \eqref{endconfiguration} for $ t\longrightarrow \infty $.  
\end{enumerate}

\begin{remark}\label{rem_on_MM_5}
In any case different flows may produce along their respective flow lines non coinciding sets of end configurations as in \eqref{endconfiguration}. For instance, as a pathological example  in \cite{MM5} shows, while the strong gradient flow exhibits \textit{zero weak limit} non compact flow lines escaping to \textit{infinity}, all of which have one and the same end configuration for $ t\longrightarrow \infty $, a slight variation of this flow is compact, i.e. is convergent or in other words does not have any flow lines escaping to \textit{infinity} at all. 
\end{remark}  

In order to avoid such issues we will  
\begin{enumerate}[label=(\roman*)]
\item define an \textit{energy decreasing} flow $ \Phi $ as in \eqref{general_form_of_a_flow}.
\item prove beyond (3) above, that every \textit{zero weak limit} flow $ u $ line of $ \Phi $ escaping to \textit{infinity}, i.e.
\begin{equation*}
\begin{split}
\partial_{t}u=A(u)
\; \text{ with } \;
u=\alpha^{i}\varphi_{a_{i},\lambda_{i}}+v 
\; \text{ eventually, } \;
\end{split}
\end{equation*}
has a unique end configuration, informally
\begin{equation*}
\begin{split}
u\longrightarrow c=\alpha_{\infty}^{i}\varphi_{a_{i,\infty},\infty}
\; \text{ as } \;
t\longrightarrow \infty
\end{split}
\end{equation*}
and $\varphi_{a_{i,\infty},\infty}=\delta_{i}$ is a Dirac measure for $L^{\frac{2n}{n+2}}$.
\item classify all these attained end configurations as $ a_{i,\infty}=x_{i} $ 
for some 
\begin{equation*}
\begin{split}
x_{i}
\in 
\{ \nabla K=0 \} \cap \{ \Delta K<0 \}  
\; \text{ and } \;
x_{i}\neq x_{j}   
\; \text{ for } \; 
i\neq j,
\end{split}
\end{equation*}
\quad cf. \eqref{principal_a_i_behaviour}, while $ \alpha_{i,\infty}=\frac{\Theta}{K(x_{i})^{\frac{n-2}{4}}} $ 
with a normalizing constant $ \Theta>0 $, cf. \eqref{Theta}.
\item determine homologically for a contractible \textit{neighbourhood} 
\begin{equation*}
\begin{split}
V_{\varepsilon}(c)
= &
V(q,\varepsilon)\cap \{ \vert a_{i}-x_{i} \vert \leq \varepsilon\} 
\end{split}
\end{equation*}
of $ c $, cf. Definition \ref{V_p_e} and \eqref{eq:eij}, the change of topology as
\begin{equation*}
\begin{split}
H_{k}
(
\{ J\leq J_{\infty} -\delta \} \cup V_{\varepsilon}(c)
,
\{ J\leq J_{\infty} -\delta \}
)
=
\delta_{k,m}
\; \text{ for } \;
\delta \ll \varepsilon \ll 1
\; \text{ and } \;
m=\ind(J,u_{\infty}), 
\end{split}
\end{equation*}
cf. Theorem \ref{thm}, while this neighbourhood does not contain any solution, i.e. 
$$ V_{\varepsilon}(c) \cap \{ \partial J=0 \}=\emptyset .$$ 

\item show, that none of these end configurations can be avoided as an obstacle to energetic deformation, i.e. they are critical points at infinity,  cf. Definition \ref{critical_points_at_infinity}.
\end{enumerate}
 
Whereas (i)-(iii) are seen by analysing one specific flow $ \Phi $, the index in (iv) is justified from the subcritical approximation, while the minimality condition (v) follows from a Morse lemma at infinity, i.e. a faithful expansion of
$ J $ on $ V_{\varepsilon}(c) $ of Morse type. 
And this is, how to prove Theorem \ref{thm}.

\

Theorem \ref{thm} as a result,  in particular and foremost the exclusion of tower bubbles along a suitable flow,  is  not new,  we refer to  Appendix 2 in \cite{bab1} for the case of the sphere. While in the latter work the most  important arguments are nicely displayed, there is an inaccuracy, which we shall discuss after the proof of Theorem \ref{thm} at the end of this work, whose motivation besides is threefold
\begin{enumerate}[label=(\roman*)]   
 \item[(1)] the discourse fits well into the language and notation of 
 \cite{MM1},\cite{MM2},\cite{MM4},\cite{may-cv},\cite{MM6},\cite{MM5},\cite{may_ndiaye_riemmanian_mapping} and the result demonstrates a natural equivalence of subcritical approximation versus critical points at infinity of energy decreasing type.
 \item[(2)] the flow, we study,  is in contrast to previous explicit constructions, cf. \cite{bab},\cite{bab1},\cite{[BCCH4]},\cite{[BCCHhigh]},\cite{BenAyed_Ahmeou}, norm and positivity preserving, hence provides a natural deformation of energy sublevels as subsets of the variational space $X$ for the variational functional $J$ on $X$. 
Conversely these properties hold true for Yamabe type, i.e. weak  $L^{2}$-pseudo gradient flows, cf. \cite{Brendle},\cite{may-cv},\cite{Struwe_Malchiodi_Q_curvaure}, whose analysis relies on higher $L^{p}$ curvature norm controls, hence are not easy to adapt at \textit{infinity} to exclude  tower bubbles. 
 \item[(3)] the construction of the flow as in  Section \ref{section_flow_construction} is explicit and keeps track of all the relevant quantities. In particular we move the blow-up points $a_{i}$ exactly along the stable  manifolds of $K$, which will prove helpful for adaptations  to describe the flow outside $V(q,\varepsilon)$, but still in a concentrated regime.  
\end{enumerate}         

\section{Critical points at infinity}\label{section_critical_points_at_infinity}

While (i)-(v) above identify the critical points at infinity according to \cite{bab_calc_var}, their definition
as in \cite{bab_calc_var} is related to a pseudo gradient or more generally to a flow of type \eqref{general_form_of_a_flow}. And therefore \textit{this} notion of a critical point at infinity is not intrinsic to the variational problem. On the other hand in some situations, cf. \cite{MM5} and Remark \ref{rem_on_MM_5}, it is counter intuitive to identify a critical point at infinity with a non compact flow line of a specific flow, if any non compactness can be avoided by considering a different flow. 
      
\

We wish to take a different view. Let us first define various objects related to Palais-Smale sequences. 
Strictly speaking Proposition \ref{blow_up_analysis} and Remark \ref{remark_blow_up_analysis} describe the possible Palais-Smale end configurations as elements of \eqref{PS_set} below, but in fact  each such configuration can be easily obtained as a natural limit of a Palais-Smale sequences.
\begin{definition}
Let $\overline{(PS)}$ denote the blow-up profiles arising from Palais-Smale sequences on $X$, i.e.
\begin{equation}
\begin{split}
\overline{(PS)}\label{PS_set}
= 
\{ 
 u_{\infty} + \sum_{i} \alpha_{i_{\infty}}\delta_{a_{i_{\infty}}}
\; : \; &
a_{i_{\infty}}\in M  \; \text{ and for some } \;  \kappa_{\infty}>0 
\\ & \; L_{g_{0}}u_{\infty}=\frac{K}{\kappa_{\infty}}u_{\infty}^{\frac{n+2}{n-2}} ,\;
\alpha_{i_{\infty}}=\sqrt[\frac{4}{n-2}\;\;\,]{\frac{4n(n-1)\kappa_{\infty}}{K(a_{i_{\infty}})}}
\;\}.
\end{split}
\end{equation}
We also  
\begin{enumerate}[label=(\roman*)]
\item 	
denote for $A\subseteq X$ by
\begin{equation*} 
\begin{split}
\overline{(PS)}^{A}
= 
\{ 
c\in \overline{(PS)} \; \mid \; \; \exists \; (u_{n})\subseteq A 
\;\text{ Palais-Smale with } \;
c \;\text{ as limiting profile } \;
\} 
\end{split}
\end{equation*}
the blow-up profiles arising from  Palais-Smale sequences in $A\subseteq X$. 
\item 
denote for $  c\in \overline{(PS)} $ with corresponding $\kappa_{\infty}>0$, cf. \eqref{PS_set}, by 
\begin{equation}\label{limiting_energy}
\begin{split}
J(c)=\lim_{k \to \infty } J(u_{k})=\kappa_{\infty}^{\frac{2-n}{n}}
\end{split}
\end{equation}
the unique limiting energy 
 of any Palais-Smale sequence $ (u_{k})\subset X $ with $ c $ as limiting profile. 
\item
call for $c\in \overline{(PS)}$ an open set $U=U_{c}\subseteq X$ an \textbf{open neighbourhood} of $c$, if
\begin{equation*}
\begin{split}
\; \forall \; (u_{k})\subset X \; \text{ Palais-Smale with limiting profile } \;  c
\; : \; 
u_{k} \in U \; \text{ eventually }  \; \wedge \; d(u_{k},\partial U)\neq o(1),
\end{split}
\end{equation*}
and call an open set $V=V_{A}\subset X$ an \textbf{open neighbourhood} of $A\subseteq \overline{(PS)}$, if
\begin{equation*}
\begin{split}
\; \forall \; c\in A\; : \; V_{A}\; \text{ is a neighbourhood of } \; c.
\end{split}
\end{equation*}
\item
call $O\subseteq \overline{(PS)}$ \textbf{open}, if 
\begin{equation*}
\begin{split}
\; \forall \; c\in O \; \exists \; U=U_{c} \; \text{ a neighbourhood of } \; c
\; : \; 
\overline{(PS)}^{U_{c}} \subseteq O,
\end{split}
\end{equation*}
and $C\subseteq \overline{(PS)}$ \textbf{closed}, if 
$\overline{(PS)}\setminus C$ is \textbf{open}. 
\end{enumerate}

\end{definition}

\begin{remark}
We remark, that
\begin{enumerate}[label=(\roman*)]
\item
as a fundamental property 
\begin{equation}\label{sequential_compactness} 
\begin{split}
\; \forall \; L>0
\; : \; 
\{ c\in \overline{(PS)}\; : \; J(c)\leq L \} 
\; \text{ is sequentially compact, } \;
\end{split}
\end{equation}
cf. \eqref{limiting_energy}, i.e. for any $(c_{k})\subset \overline{(PS)}$ with $J(c_{k})\leq L$ and up to a subsequence
\begin{equation*}
\begin{split}
c_{k}=u_{\infty,k}+\sum^{q_{k}}_{i=1}\alpha_{i_{\infty},k}\delta_{a_{i_{\infty},k}}
\xlongrightarrow{k\to \infty}
c_{\infty}=\tilde u_{\infty}+ \sum^{q_{\infty}}_{i=0}\alpha_{i_{\infty}}\delta_{a_{i,\infty}}
\end{split}
\end{equation*}
in the sense of distributions. In fact by \eqref{limiting_energy_explicit} the number of diracs $q_{k}$ is bounded. And either 
$u_{\infty,k}=0$ up to a subsequence or 
by \eqref{limiting_energy_explicit} and \eqref{normalisation_to_restriction} the sequence
$(u_{\infty,k})$ of solutions constitutes a Palais-Smale sequence of bounded norm and energy, whence 
Proposition \eqref{blow_up_analysis} is applicable. 

\item 	finite intersections and arbitrary unions of open subsets of $\overline{(PS)}$ are again open.

\item for
$c=u_{\infty}+\alpha^{i}_{\infty}\delta_{a_{i_{\infty}}}\in \overline{(PS)}$ and $\varepsilon>0$
\begin{equation*}
\begin{split}
U=U_{c}^{\varepsilon}
= &
\{ 
\frac
{u_{\infty}+\alpha^{i}\varphi_{a_{i},\lambda_{i}}+v }
{\Vert u_{\infty}+\alpha^{i}\varphi_{a_{i},\lambda_{i}}+v  \Vert}
\; : \; \mid
\sum_{i,j}\varepsilon_{i,j}+\vert \alpha_{i}-\alpha_{i_{\infty}}\vert + \frac{1}{\lambda_{i}} + d(a_{i},a_{i_{\infty}})
+
\Vert v \Vert <\varepsilon 
\} 
\end{split}
\end{equation*}
is a natural neighbourhood of $c$ in $X$.
\end{enumerate}
\end{remark}

\

\noindent
In this way the Palais-Smale closure of $X$, i.e.
$
E
= 
X \cup \overline{(PS)}
$
with the topology
\begin{equation*}
\begin{split}
\text{Top}(E)
= &
\{ V\cup \; ( \;\overline{(PS)}^{V} \setminus \overline{(PS)}^{\partial V} \;) 
\; : \; V\subseteq X \; \text{ open }\;\} 
\end{split}
\end{equation*}
becomes a separable Hausdorff space and we identify a neighbourhood 
$O=O_{C}\in Top(E)$ of $C\subseteq E$ with its part 
$U=O\cap X$ in the variational space $X$, cf. (iii) in Definition \ref{PS_set}. 

\begin{definition}
Let
$$ 
\Pi F 
=
\{ \Psi= \Pi^{k}_{i=1}F_{i}\circ \tau_{i} 
\; : \; 
F_{i},\tau_{i} 
\; \text{ as in (1)\,-\,(4) below }   \; \}$$  
denote the set of  consecutive, energy decreasing deformations $\Psi$, for which
\begin{enumerate}[label=(\roman*)]
\item[(1)] $\forall\;1\leq i \leq k\;:\;F_{i}$ as in \eqref{general_form_of_a_flow}	
\item[(2)] $ \forall\; 1\leq i \leq k-1\;:\;\tau_{i}:X\longrightarrow \R_{\geq 0} $ Lipschitz
\item[(3)] during $[0,k-1]$ we deform along 
$$(\Psi\lfloor_{X\times[i-1,i]})(v_{0},t)=F_{i}(v_{0},\tau_{i}(x)\cdot (t-i+1)),$$
i.e. solving for $1\leq i \leq k-1$ consecutively the initial value problems 
$$
\left\{
\begin{matrix*}[l]
v(t_{0})=v_{0}  &  \;\text{ for }\;  & t_{0}=i-1   \\
\partial_{t}v=\tau_{i}(v_{0})\cdot G_{i}(v) &  \;\text{ for }\;  & i-1\leq t \leq i  \\
\end{matrix*}
\right.
$$
\item[(4)] and finally 
$$ (\Psi\lfloor_{X\times [k-1,\infty)})(v_{0},t) = F(v_{0}, t-k+1),$$
i.e. solving the initial value problems 
$$
\left\{
\begin{matrix*}[l]
v(t_{0})=v_{0}  &  \;\text{ for }\;  & t_{0}=k-1   \\
\partial_{t}v= G_{i}(v) &  \;\text{ for }\;  & t\geq k-1  \\
\end{matrix*}
\right. .
$$ 
\end{enumerate}
\end{definition}	
Note, that every
$\Psi(\cdot,t)=\Pi_{i=1}^{k}F_{i}(\cdot,\tau_{i}(\cdot)t)\in \Pi F$ 
acts as a family of diffeomorphisms  
and along each flow line
$u=\Psi(u_{0},\cdot)$
there holds almost always
$\partial_{t}J(u)\leq 0$ and, cf. \eqref{general_form_of_a_flow},
$$
\partial_{t}J(u)\longrightarrow 0 \Longleftrightarrow \vert \partial J(u) \vert \longrightarrow 0.
$$
Since ultimately critical points at infinity will be related to an obstacle to energetic deformation below a certain energy $\sigma$, we introduce the subsequent notions.

\begin{definition}\label{reducible}
For $\sigma \in \R $ we call a closed subset $W\subseteq X$   $\sigma$-reducible, if
\begin{equation*}
\; \forall \; \varepsilon>0 \; \exists \; \Psi \in \Pi F  \; \wedge \; T \geq 0
\; \forall \; w\in W
\; : \; 
J(\Psi(w,T))\leq \sigma + \varepsilon.
\end{equation*}
\end{definition}
Clearly every closed subset of  a $\sigma$-reducible set is $\sigma$-reducible, $\{ J\leq \sigma \} $ is $\sigma$-reducible and, if 
$W$ is $\sigma_{1}$-reducible and $\sigma_{1}\leq \sigma_{2}$, then $W$ is $\sigma_{2}$-reducible as well. 

\begin{definition}\label{capturing}
For $\sigma \in \R $ we call a closed subset $C\subseteq E_{\sigma}=E\cap \{ J=\sigma \}$ $\sigma$-capturing, if
\begin{equation*}
\begin{split}
\; \forall \; U=U_{C} \; \exists \; \varepsilon > 0
\; \forall \; W \; \text{ $\sigma$-reducible } \; 
\; \exists \; \Psi \in \Pi F  \; \wedge \;  T\geq 0
\; \forall \; t\geq 0
\; : \; 
\Psi(W,T+t)\subset \{ J<\sigma-\varepsilon  \} \cup U.
\end{split}
\end{equation*}
\end{definition}	
To clarify this definition 
\begin{enumerate}[label=(\roman*)]
\item consider the case, that $C=\emptyset$ is $\sigma$-capturing. Then clearly 
$E_{\sigma}=\{ J=\sigma \}\cap E$ as an energy level 
is not an obstacle to energetic deformation. 
\item consider with $D=[-1,1]$ the stretched maximum
\begin{equation*}
J
:
\R\longrightarrow \R
:
x\longrightarrow 
\left\{
\begin{matrix*}[c] 
-(x+1)^{2} &  \;\text{for}\;  &   x \leq -1\\
0 &  \;\text{for}\;  &  -1\leq x \leq 1 \\
-(x-1)^{2} &  \;\text{for}\;  &  x\geq 1  \\
\end{matrix*}
\right..
\end{equation*}
In this case $C=D$ is $\sigma$-capturing, while no subset of $D$ is $\sigma$-capturing. In fact
suppose, that  some $C \subsetneq D$ was $\sigma$-capturing. Then there exists $U=U_{C}$ and
\begin{enumerate}
\item[(1)] 	
$d\in D\setminus C, \epsilon>0$
such, that 
$
B_{2\epsilon}(d)\cap U_{C}=\emptyset
$, since $C$ is closed.
\item[(2)]
some $0<\varepsilon <0$ such, that for every $\sigma$-reducible $W$ we find $\Psi \in \Pi F, T\geq 0$ such, that
$$
\Psi(W,T)\subset \{ J<\sigma-\varepsilon  \} \cup U.
$$
\end{enumerate}
Combining then $\Psi$ with a flow $\Phi \in \Pi F$, along which for $t>T$
$$
\partial_{t}u
=
 G(u)
=
\left\{
\begin{matrix*}[l]
1  &  \;\text{for}\;  &  x\geq d+2\epsilon \\
0 &  \;\text{for}\;  &  d-\epsilon \leq x \leq d+
\epsilon \\
-1 & \;\text{for}\; & x\leq d-2\epsilon 
\end{matrix*}
\right.,
$$ 
to a flow $\Theta$, we then find
$
\Theta(W,T+3)\subset \{ J<\varepsilon  \},
$
while this is readily impossible, when choosing 
$$
W=\{J\leq \sigma\} =\R.
$$

\item consider the function as in Figure \ref{fig:some_function} and observe, that 
\begin{figure}[h]
\centering
\includegraphics[scale=0.8]{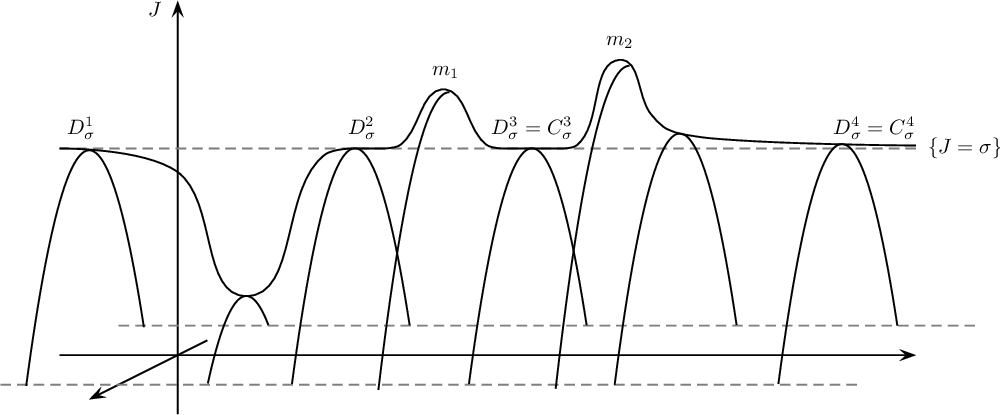}
\caption{Some function on $\R^{2}$}
  \label{fig:some_function}
\end{figure}

\begin{enumerate}[label=(\roman*)]
\item[(1)] every $\sigma$-reducible $W$, which by definition is closed, is a subset of some complement  
$$
X\setminus (B_{\epsilon }(m_{1})\cup B_{\epsilon }(m_{2})),\; X=\R^{2}
$$

\item[(2)] evidently 
$D_{\sigma}=\cup_{i=1}^{4}D^{i}_{\sigma}=\overline{(PS)}_{\sigma}$
and  for every $U=U_{D}$ and $\varepsilon >0$ sufficiently small we may deform
every $\sigma$-reducible $W$ along some $\Psi\in \Pi F$ onto
$
\{ J<\sigma-\varepsilon  \}\cup U  
$,
i.e. $D_{\sigma}$ is $\sigma$-capturing.
\item[(3)] we may deform along some $\Psi \in \Pi F$ suitably small neighbourhoods
$
U_{D^{i}_{\sigma}}
$
of
$
D^{i}_{\sigma} 
$
in such a way, that $\Psi$ leaves  
$U_{D^{i}_{\sigma}}\cup \{ J<\sigma-\varepsilon  \} $ invariant, while for some $T\geq 0$
$$
\Psi(U_{D^{1}_{\sigma}},T),\Psi(U_{D^{2}_{\sigma}},T)\subseteq \{ J<\sigma-\varepsilon  \} 
$$
\item[(4)] as a consequence also
$C=D^{3}_{\sigma}\cup D^{4}_{\sigma}$ is 
$\sigma$-capturing.

\item[(5)] in fact $C$ is minimal in the sense, that $C$ is $\sigma$-capturing and for every $\sigma$-capturing $D\subseteq E_{\sigma}$ also
$C\cap D$ is $\sigma$-capturing, cf. Definition \ref{strongly_critical}.
\end{enumerate}
We remark, that any flow, by which we push a neighbourhood of $D_{\sigma}^{1}$ below a certain energy $\sigma-\varepsilon $, requires diverging speed
towards $D_{\sigma}^{1}=-\infty$, cf. (i) of Remark \ref{rem_infinite_index}. 
A possible choice is $\partial_{t}x=x^{2}$ near 
$D_{\sigma}^{1}$
\end{enumerate}
While the set $D$ in the aforegoing examples is naturally critical, the set $C$ is the one of variational interest, i.e. the obstacle to energetic deformation, and correctly identified as the unique and minimal strongly critical set as defined below.
\begin{definition}\label{strongly_critical}
We call $C\subseteq E_{\sigma}$ strongly critical, if 
$C$ is $\sigma$-capturing and 
\begin{equation*}
\begin{split}
\; \forall \; D\subseteq E_{\sigma} \; \text{$\sigma$-capturing} 
\; : \; 
C\cap D \; \text{ $\sigma$-capturing.} \;
\end{split}
\end{equation*}
\end{definition}	
Note, that this definition does not exclude the case, that $C=\emptyset$ is strongly critical. But if so, the situation is variationally trivial. 
\begin{proposition}\label{PS_strongly_critical}
$\overline{(PS)}_{\sigma}=\overline{(PS)}\cap \{ J=\sigma \}\subseteq E_{\sigma}$
is strongly $\sigma$-critical. 
\end{proposition}	
\begin{proof}
Let $C=\overline{(PS)}_{\sigma}$ and 
$U=U_{C}$ arbitrary. Then using \eqref{sequential_compactness}
\begin{equation*}
\vert \partial J \vert >\gamma 
\; \text{ on } \; 	
\{ \sigma-\epsilon< J < \sigma+\epsilon \}\setminus U
\; \text{ for some } \; \epsilon,\gamma>0 
\end{equation*}
and we consider for some $\delta>0$ open subneighbourhoods of $C$ satisfying
\begin{equation*}
U= V_{3} \supset V_{2} \supset V_{1}
\; \text{ with } \;
d(V_{i},X\setminus V_{i+1})>\delta. 
\end{equation*}
Let $\varepsilon >0$ such, that
$
\gamma \delta > 4\varepsilon.
$
Note, that to travel a distance $\delta>0$ along a negative gradient flow line  
$$
u
:
[0,t]
\longrightarrow 
\{ \sigma-\delta < J < \sigma+\epsilon \}\setminus U
$$
comes at an energetic cost $\Delta J\geq\gamma \delta$, since
\begin{equation*}
\delta
=
d(u(t),u(0))
\leq
\int^{t}_{0} \vert \partial J(u(\tau)) \vert  \,d\tau
\leq
\gamma^{-1}
\int^{t}_{0} \vert \partial J(u(\tau)) \vert^{2}  \,d\tau
=
\frac{J(u(0))-J(u(t))}{\gamma}.
\end{equation*}
We therefore consider some arbitrary $\sigma$-reducible $W$
and choose $\Psi \in \Pi F$ such, that
$$\sup_{w\in W}J(\Psi(w,T_{0}))<\sigma +\varepsilon$$
and $\Phi \in \Pi F$ given by $\Psi$ during $[0,T_{0}]$
and the negative gradient flow for $t>T_{0}$. We then show
$$
\; \exists \; T\geq T_{0} \; \forall \; t\geq T\; : \; 
\Phi(W,T+t) \subset \{ J<\sigma-\varepsilon \}\cup U 
$$
in order to verify, that $C$ is $\sigma$-capturing. Hence consider some
$$
u_{0}\in \Psi(W,T_{0})=\Phi(W,T_{0})
\subset \{ J<\sigma+\varepsilon \} 
$$
as an initial data for the negative gradient flow line $u$. Then
\begin{enumerate}[label=(\roman*)]
\item in case $u_{0}\in X\setminus V_3$, the flow line $u$ can never reach 
$V_{2}\cap \{ J\geq \sigma-\varepsilon \} $, 
since otherwise $u$ would have to travel through $V_3$ bridging a distance $\delta>0$, which comes at an energetic cost $\gamma \delta > 4\varepsilon$, while we have only an energetic gap of $2\varepsilon$ at disposition.  As a consequence $u$ enters $\{ J<\sigma-\varepsilon \} $ and does so in some finite time $T_1\geq T_{0}$, which is uniformly upper bounded for all $u_{0}\in X\setminus V_3$.

\item in case $u_{0}\in V_{3}\setminus V_2$ and by the same argument as above, the flow line can never reach $V_1 \cap \{ J\geq \sigma -\varepsilon \} $ and thus enters $\{ J<\sigma-\varepsilon  \} $ in some uniformly upper bounded time $T_{2}$. 
 
\item in case $u_{0}\in V_{2}$, then the flow line $u$ can never leave $V_{3}\cup \{ J<\sigma-\varepsilon  \} $ again by energy consumption.
\end{enumerate}
As a consequence for $T=\max\{ T_{1},T_{2} \}\geq T_{0} $ we find
$$
\; \forall \; t\geq T\; : \; 
\Phi(W,T+t)\subset \{ J<\sigma+\varepsilon  \} \cup V_{3}.
$$
Recalling $U=V_{3}$, this shows, that $C=\overline{(PS)}_{\sigma}$
is $\sigma$-capturing. To prove, that $C$ is even strongly 
$\sigma$-critical, we consider some arbitrary $\sigma$-capturing $D$ and show, that $D\cap C$ is also $\sigma$-capturing. Again this follows from energy consumption flowing by the negative gradient flow away from $C=\overline{(PS)}_{\sigma}$.
\end{proof}
\begin{proposition}\label{minimality}
There exists a minimal, strongly critical $M\subseteq \overline{(PS)}_{\sigma}$.
\end{proposition}	
\begin{proof}
We may assume, that $\emptyset \subseteq E_{\sigma}$ is not strongly critical. In particular and necessarily 
$\overline{(PS)}_{\sigma}\neq \emptyset,$
 since otherwise for some $\gamma,\epsilon>0$
by \eqref{sequential_compactness}
$$\vert \partial J \vert>\gamma 
\; \text{ on } \; 
\{ \sigma-\epsilon < J < \sigma + \epsilon\}
$$  
and this implies, that every $\sigma$-reducible $W$ can be brought down into $\{ J<\sigma-\varepsilon \}$ for any $0<2\varepsilon<\epsilon$ in finite time along the negative gradient flow.
Hence $\overline{(PS)}_{\sigma}\neq \emptyset$ and by virtue of
Proposition \ref{PS_strongly_critical} 
we may consider 
\begin{equation*}
P=\{ D\subseteq \overline{(PS)}_{\sigma}\; : \; D \; \text{ strongly critical } \;  \} 
\neq \emptyset
\end{equation*}
as a by inclusion partially ordered set. Let us denote by 
$
C_{0}\supseteq C_{1} \supseteq C_{2} \supseteq \ldots 
$
an arbitrary chain in $P$. 
Then the assertion follows from Zorn's Lemma, provided
$$C=\cap_{i}C_{i}\subseteq \overline{(PS)}_{\sigma}$$ 
is strongly critical, i.e. a lower bound for this chain in $P$.
To see the latter we have to show, that 
\begin{enumerate}[label=(\roman*)]
\item 	$C$ is $\sigma$-capturing and
\item   $C\cap D$ is $\sigma$-capturing, whenever some $D\subseteq E_{\sigma}$ is $\sigma$-capturing.
\end{enumerate}

To prove (i) consider an arbitrary $U=U_{C}$.  
Then, as $C_{k}\subseteq \overline{(PS)}_{\sigma}$ is closed and $\overline{(PS)}_{\sigma}$ is sequentially compact, cf. \eqref{sequential_compactness}, there exists $k\in \N$ such, that
$U=U_{C_{k}}$ is a neighbourhood of $C_{k}$. Moreover, since $C_{k}$
is strongly critical, $C_{k}$ is in particular $\sigma$-capturing. Hence according to Definition \ref{capturing}
we find $\varepsilon >0$, such that we may capture every $\sigma$-reducible $W$ in $\{ J<\sigma-\varepsilon  \} \cup U$ by some $\Psi \in \Pi F$ as desired. Therefore and, since $U=U_{C}$ is arbitrary, $C$ is $\sigma$-capturing itself. 

\

To prove (ii) consider an arbitrary $\sigma$-capturing $D\subseteq E_{\sigma}$. Since $C_{k}$ is strongly critical, by definition $D\cap C_{k}\subseteq \overline{(PS)}_{\sigma}$ is $\sigma$-capturing for every $k\in \N$. Arguing as for (i) we then find, that 
$D\cap C$ is $\sigma$-capturing as well. 
\end{proof}
While Zorn's Lemma, as we have seen, guarantees the existence of minimal strongly $\sigma$-critical sets, we have to show uniqueness of the latter separately.
\begin{lemma}\label{uniqueness}
There exists a unique, minimal strongly critical 
$M_{\sigma}\subseteq \overline{(PS)}_{\sigma}$.
\end{lemma}	
\begin{proof} 
By Proposition \ref{minimality} there exists some minimal, strongly critical 
$M_{1}\subseteq \overline{(PS)}_{\sigma}$. 
Suppose, there exists another minimal strongly critical $M_{1}\neq M_{2} \subseteq \overline{(PS)}_{\sigma}$. 

Then, since $M_{2}$ is strongly $\sigma$-critical, $M_{2}$ is by definition $\sigma$-capturing. And, since $M_{1}$ is strongly critical, we deduce, that $M_{1}\cap M_{2}$ is $\sigma$-capturing as well. 

Moreover consider some arbitrary $\sigma$-capturing $D\subset E_{\sigma}$. Then, since $M_{2}$ is strongly critical, also $D\cap M_{2}$ is $\sigma$-capturing. And, since $M_{1}$ is strongly critical, also $D\cap M_{1} \cap M_{2}$ is $\sigma$-capturing. 

We conclude, that $M_{1}\cap M_{2}$ is strongly critical, which contradicts the minimality of $M_{1}$ and $M_{2}$.
\end{proof}

With Lemma \ref{uniqueness} at hand we then define critical points at infinity as follows.
\begin{definition}\label{critical_points_at_infinity}
We call $c\in E$ a critical point at infinity, if 
$c\in \cup_{\sigma}M_{\sigma}\setminus X$.
\end{definition}	
Note, that the definition of
$M_{\sigma}\subseteq \overline{(PS)}_{\sigma}\subseteq E_{\sigma}$ 
does \textit{only} depend on $J$ and the space $\Pi F$ of admissible deformations, in particular does not depend on a specific flow $\Phi \in \Pi F$ or for instance a presumed Morse structure around elements of 
$\overline{(PS)}_{\sigma}$. 

\

Let us show, that the unique, minimal strongly critical sets $M_{\sigma}$ are generically meaningful. 
\begin{proposition}\label{non_degenerate_implies_strongly_critical_1}
Let $c\in \{ J=\sigma \}$ be a non degenerate critical point of finite index. Then $c\in M_{\sigma}$.
\end{proposition} 
\begin{proof}
Arguing by contradiction, we suppose
$c\not \in M_{\sigma}$. Then
$$
\; \exists \; 
U_{M_{\sigma}}
 \; \wedge \; 
U_{c}
 \; : \; 
U_{M_{\sigma}}
\cap
U_{c}
= 
\emptyset.
$$
Consider in a Morse chart around 
$c = 0$, e.g.
$$
J(u)=\sigma+\vert x^{+} \vert^{2}-\vert x^{-} \vert^{2} 
\; \text{ on } \;
V=V_{c}=B_{\epsilon}(0)\subset U_{c},
$$
a sequence 
$$
V\cap X^{+}
\supset
(x^{+}_{k})\longrightarrow c
\; \text{ as } \; k\longrightarrow \infty,
$$
to which for arbitrarily small 
$\sqrt{\varepsilon }\ll \delta \ll  \epsilon$ we attach 
$m=ind(J,0)$-dimensional
disks 
$$
B^{-}_{2\delta}(x^{+}_{k}) 
=
\{ 
(x^{+},x^{-})\in V
\; : \; 
x^{+}=x^{+}_{k}  
\; \wedge \; 
\vert x^{-} \vert \leq 2\delta 
\} 
\subset V.
$$
with boundary
$
\partial B^{-}_{2\delta}(x^{+}_{k})
\subset 
\{ J\leq\sigma -\varepsilon  \}
$
Then for degree reasons, see below, 
\begin{equation}\label{stable_manifold_intersection}
\; \forall \; 
\Psi \in \Pi F  \; \wedge \; t\geq 0
\; \exists \; 
x^{+}_{k,t} \in \Psi(B^{-}_{2\delta}(x^{+}_{k}),t)
\; : \; 
x^{+}_{k,t}
\in  
V \cap X^{+} 
 \cap 
 \{\sigma \leq J\leq J(x^{+}_{k}) \}.
\end{equation}
However, 
since 
each $B^{-}_{2\delta}(x^{+}_{k})$ is $\sigma$-reducible and
$M_{\sigma}$ is $\sigma$-capturing, by definition we find
\begin{equation}\label{no_stable_manifold_intersection}
\Psi(B_{2\delta}(x^{+}_{k}),T) \subset \{ J<\sigma -\gamma \} \cup U_{M_{\sigma}}.
\end{equation}
for suitable $T,\gamma>0$ and $\Psi \in \Pi F$.
Then \eqref{stable_manifold_intersection} and
\eqref{no_stable_manifold_intersection}
lead to the obvious contradiction
$$
x^{+}_{k,t} 
\in 
V
\cap
U_{M_{\sigma}}
\subseteq
U_{c}\cap U_{M_{\sigma}}
=\emptyset.
$$
Hence we are left with proving \eqref{stable_manifold_intersection}.
On the Morse chart $V=B_{\epsilon}(0)$
consider the continuous map
\begin{equation*} 
\begin{split}
(x^{+},x^{-}) 
\longrightarrow 
x^{-}
\longrightarrow 
(
\frac{\delta}{\vert x^{-} \vert } \mathbb{1}_
{
\{\vert x^{-} \vert \geq \delta \}
}
+
\mathbb{1}_
{
\{\vert x^{-} \vert <\delta \} 
}
)
x^{-}
\longrightarrow 
\sfrac
{
(
\frac{\delta}{\vert x^{-} \vert }
\mathbb{1}_
{
\{\vert x^{-} \vert \geq \delta \}
}
+
\mathbb{1}_
{
\{\vert x^{-} \vert <\delta \} 
}
)
x^{-}
}
{\sim},
\end{split} 
\end{equation*}
with $ \sim $ denoting the natural identification
of the disk $ B^{m}_{\delta}(0) $ via
$$x\sim y \Longleftrightarrow \Vert x \Vert =\Vert y \Vert=\delta 
\; \text{ for } \; 
x,y\in B^{m}_{\delta}(0)
$$ 
with the sphere $ S^{m}_{\delta}$ with south pole $ S=0 $.
After rescaling we hence obtain a continuous map
\begin{equation*}
\begin{split}
\theta:B_{\epsilon}(0)\longrightarrow & 
S^{m}_{1}.
\end{split}
\end{equation*}
Moreover for 
$ 
x=(x^{+},x^{-})
\in 
\partial B_{\epsilon}(0)
\cap 
\{ J\leq \sigma + \varepsilon \} 
$ 
and recalling 
$ 0<\sqrt{\epsilon}\ll \delta \ll \epsilon $
we have
\begin{equation*}
\vert x^{+} \vert^{2}
+
\vert x^{-} \vert^{2}
=  
\epsilon^{2}
\; \text{ and } \;
\vert x^{+} \vert ^{2}
\leq \vert x^{-} \vert^{2}
+\varepsilon,
\end{equation*}
whence we may assume
\begin{equation*}
\begin{split}
\vert x_{-} \vert^{2} 
\geq &
\frac{\epsilon^{2}-\varepsilon}{2}
>2\delta^{2}.
\end{split}
\end{equation*}
Consequently and with $ N $ denoting the north pole of $ S_{1}^{m} $ 
$$ \theta
(
\partial B_{\epsilon}(0)
\cap 
\{ J\leq \sigma + \varepsilon \} 
)
=
\{ N \},$$ 
whence we may extend continuously and restrict
$
\theta :  
\{ J\leq \sigma +\varepsilon \} 
\longrightarrow 
S_{1}^{m}
$
by putting
\begin{equation}\label{sigma_extension}
\begin{split}
\theta\equiv N \; \text{ on } \;  
\{ J\leq \sigma +\varepsilon \} 
\setminus B_{\epsilon}(0).   
\end{split}
\end{equation}
We also find, that for all $k$ sufficiently large 
\begin{equation}\label{boundary_retracts_to_point}
\begin{split}
\forall\; t\geq 0
\;:\; 
\theta(\partial \Psi(B^{-}_{2\delta}(x_{k}^{+}),t)=\{ N \} 
\end{split}
\end{equation}
for any flow $ \Psi\in \Pi F $. In fact let
$
y_{0}=(y_{0}^{+},y_{0}^{-})
\in
\partial B_{2\delta}^{-}(x_{k}^{+})
\subset B_{\epsilon}(0).
$
Then by construction 
$$
\theta(y_{0})=N
\; \text{ and } \; 
J(y_{0})=\sigma + \vert x_{k}^{+} \vert^{2} - \vert y_{0}^{-} \vert^{2}  
=
\sigma - 4\delta^{2}+o_{\frac{1}{k}}(1).
$$
Let $y=\Psi(y_{0},\cdot)$ and suppose 
$\theta(y(t))\neq N$ for some $t>0$. Then by \eqref{sigma_extension} necessarily
$y(t)\in B_{\epsilon}(0)$ and
$$
\sigma-4\delta^{2}+o_{\frac{1}{k}}(1)
=
J(y_{0})
\geq
J(y)
=
\sigma+\vert y^{+} \vert^{2}-\vert y^{-} \vert^{2}
\geq
\sigma - \vert y^{-} \vert^{2}  
\geq
\sigma - \delta^{2},
$$
leading to a contradiction for $k$ sufficiently large.  
Then  \eqref{boundary_retracts_to_point} implies, that
for the natural embedding 
$$
i
:
B_{1}^{m}(0)
\xlongrightarrow{\simeq} 
B_{2\delta}^{-}(x_{k}^{+})$$
and for every $ t\geq 0 $ the composition
$\theta\circ \Psi(\cdot,t) \circ i$
factorizes  to a map  
$$
\theta \circ \Psi(\cdot,t)\circ \tilde \theta:S_{1}^{m}\longrightarrow S_{1}^{m}.
$$  
Since 
$\theta \circ \tilde \theta \simeq id_{S_{1}^{m}}$
as a homotopy equivalence, 
and by continuity and constancy of the degree on $ S^{m}_{1} $  
\begin{equation*}
\begin{split}
\forall \; t\geq 0 \; \exists\; s\in S^{m}_{1}
\;:\;
\theta(\Psi(\tilde \theta(s),t))=S.
\end{split}
\end{equation*}
Consequently
$  
\;\forall\; t\geq 0
\;\exists\; x\in B_{2\delta}^{-}(x_{k}^{+}) 
\; :\;  
\Psi(x,t)=(x_{k,t}^{+},0).
$
From this \eqref{stable_manifold_intersection} readily follows.
\end{proof}  
Analogous arguments then show, that a finite index Morse structure at infinity leads to the same conclusion. The spaces following are real Banach.
\begin{lemma}\label{non_degenerate_implies_strongly_critical_2}
Let $c\in \overline{(PS)}_{\sigma}$ and suppose, that for a neighbourhood $U_{c}$ of $c$ we may parameterise
\begin{enumerate}[label=(\roman*)]
\item with spaces $X^{\pm}$ and a neighbourhood $V=V_{\mathbb 0}$ of $\mathbb 0\in X^{+}\times X^{-}$
$$
\overline{(PS)}^{U}
\simeq
\{ 
(x^{+},x^{-}) \in X^{+}\times X^{-}
\mid
(x^{+},x^{-})\in V\}.
$$
\item 
$
U_{C}
\simeq
U \subset Y^{+}\times Y^{-}
$
open with spaces 
$Y^{\pm}=\Lambda^{\pm}\times X^{\pm} \times V^{\pm}$ and  
$$
\overline{(PS)}^{U}
\simeq
\{ 
(\lambda^{+},\lambda^{-},x^{+},x^{-},v^{+},v^{-})
\; : \; 
\lambda_{i}^{\pm}=\infty,\;
(x^{+},x^{-})\in V,\;
 v_{i}^{\pm}=0 
 \} 
$$
\item
$
J(u)
=
\sigma
+
\vert \lambda^{+} \vert^{-2}
-
\vert \lambda^{-} \vert^{-2}  
+
\vert x^{+} \vert^{2}
-
\vert x^{-} \vert^{2} 
+
\vert v^{+} \vert^{2}
-
\vert v^{-} \vert^{2}  
$
\end{enumerate}
Then 
\begin{enumerate}[label=(\roman*)]
\item[(1)] 
$c\in M_{\sigma} \Longrightarrow \Lambda^{-}= \mathbb{0}$ 	
\item[(2)] 
$
\Lambda^{-}=\mathbb{0}  \; \wedge \; \dim(Y^{-})<\infty 
\Longrightarrow
c\in M_{\sigma}. 
$
\end{enumerate}
\end{lemma}	
\begin{proof}
As for (1) suppose $\Lambda^{-}\neq \mathbb{0}$. We then decrease energy within $U\simeq U_{c}$ via
\begin{enumerate}[label=(\roman*)]
\item[$(\alpha)$]  	
decreasing 
$\vert x^{+} \vert$ and $\vert v^{+} \vert $ 
until 
$\vert x^{+} \vert^{2}+\vert v^{+} \vert^{2} <\varepsilon  $
to find 
$$
J(u)
<
\sigma
-
\vert \lambda^{-} \vert^{-2}
+
\vert \lambda^{+} \vert^{-2}  
-
\vert x^{-} \vert^{2} 
-
\vert v^{-} \vert^{2} 
+
\varepsilon 
\leq
\sigma
-
\vert \lambda^{-} \vert^{-2}
+
\vert \lambda^{+} \vert^{-2}  
+
\varepsilon 
$$
\item[$(\beta)$] increasing $\vert \lambda^{+} \vert $ until
$\vert \lambda^{+} \vert^{2}<\varepsilon  $ to find
$
J(u)
<
\sigma
-
\vert \lambda^{-} \vert^{-2}
+
2\varepsilon  
$
\item[$(\gamma)$] decreasing $\vert \lambda^{-} \vert $ until 
$\vert \lambda^{-} \vert^{-2}>3\varepsilon $ to find
$
J(u)
<
\sigma
-
\varepsilon.
$
\end{enumerate}
Hence and by minimality of $M_{\sigma}$ necessarily $c\not \in M_{\sigma}$, proving (1).
(2) then follows exactly as Proposition \ref{non_degenerate_implies_strongly_critical_1} upon replacing
$X^{+}$ by $Y^{+}$ and
$X^{-}$ by $Y^{-}=X^{-}\times V^{-}$.
\end{proof}
Let us comment on these Morse structure results.
\begin{remark}\label{rem_infinite_index}
\begin{enumerate}[label=(\roman*)]
\item 	
Suppose, that as in Lemma \ref{non_degenerate_implies_strongly_critical_2} 
on a neighbourhood
$U=U_{c}$
of 
$c\in \overline{(PS)}_{\sigma}\setminus X$  
we have a diffeomorphism
$$
\Phi:U_{c} \longrightarrow  \Phi(U_{c})=U\subset Y^{+}\times Y^{-}
$$
such, that the functional takes form
\begin{equation*}
\begin{split}
J(u)
= \; &
J(\Phi^{-}(y^{+},y^{-})) \\
= \; &
\sigma
+
\vert \lambda^{+} \vert^{-2} 
-
\vert \lambda^{-} \vert^{-2}  
+
\vert x^{+} \vert^{2}
-
\vert x^{-} \vert^{2}
+
\vert v^{+} \vert^{2}
-
\vert v^{-} \vert^{2}
=
F(\lambda^{-},\lambda^{+},x^{+},x^{-}).
\end{split}
\end{equation*}
Since $\lambda^{\pm}_{i}=\infty,\; v^{\pm}_{i}=0$ corresponds to the Palais-Smale limit, clearly 
$$
\vert \partial J \vert(u) 
=
o_{\sum_{i}(\vert \lambda^{-}_{i} \vert^{-1}+\vert \lambda^{+}_{i} \vert^{-1}
+
\vert v^{+}_{i} \vert
+
 \vert v^{-}_{i} \vert
) 
} (1).
$$
But evidently for $\vert x^{+} \vert+\vert x^{-} \vert \neq 0  $
$$
\vert \partial F \vert(\lambda^{-},\lambda^{+},x^{+},x^{-})
\neq 
o_{\sum_{i}(\vert \lambda^{-}_{i} \vert^{-1}+\vert \lambda^{+}_{i} \vert^{-1}
+
\vert v^{+}_{i} \vert
+
 \vert v^{-}_{i} \vert
) 
} (1).
$$
As a consequence $d\Phi$ must be degenerating and, while $J$ has a clear Morse structure at infinity, its derivative will not relate in a trivial way to that of the Morse representation.  For instance consider
\begin{equation*}
\begin{split}
\Phi
:
\R^{n}\times \R_{+}
\longrightarrow 
\Phi(\R^{n}\times \R_{+})
\subset
W^{1,2}(\R^{n})
:
(a,\lambda)
\longrightarrow 
\delta_{a,\lambda}
\end{split}
\end{equation*}
for $n\geq 5$ and the functional 
$J(\delta_{a,\lambda})=\int K\delta_{a,\lambda}^{\frac{2n}{n-2}}$, which expands as
$$
J(\delta_{a,\lambda})
=
c_{1}K(a)+c_{2}\frac{\Delta K(a)}{\lambda^{2}}+o(\frac{1}{\lambda^{2}}).
$$
The tangential space is given by 
$
T_{\delta_{a,\lambda}}\Phi(\R^{n}\times \R_{+})
=
\langle \frac{\nabla_{a}}{\lambda}\delta_{a,\lambda},\lambda\partial_{\lambda}\delta_{a,\lambda}\rangle,
$
whence for the derivative
$$
\vert \partial J(\delta_{a,\lambda}) \vert 
\simeq 
\vert \frac{\nabla K(a)}{\lambda} \vert 
+
\vert \frac{\Delta K(a)}{\lambda^{2}} \vert 
+
o(\frac{1}{\lambda^{3}}).
$$
In particular $(\infty,a)\in \overline{(PS)}$ for every $a\in \R^{n}$. On the other hand $J(\delta_{a,\lambda})$ has under \eqref{nd} readily a Morse structure for $a \in \{ \nabla K=0 \} $ and 
$\vert \nabla_{a}\Phi(a)\vert=\Vert \nabla_{a}\delta_{a,\lambda}\Vert \simeq \lambda$.
\item
The arguments leading to Proposition \ref{non_degenerate_implies_strongly_critical_1} 
and then to Lemma 
\ref{non_degenerate_implies_strongly_critical_2} 
do rely on a Morse structure of the \textit{functional}, but 
not on a corresponding structure of the derivative, cf. (i) above.
\item
Concerning the  infinite index case, 
consider for instance the functional
$$
J(u)=\sigma - \vert y \vert^{2}
\; \text{ on a Hilbertspace  } \; Y
\; \text{ with } \; \dim(Y)=\infty.
$$
Then $ c=0 \in M_{\sigma}$. In fact either
$M_{\sigma}=\emptyset$ or $M_{\sigma}=\{ c \} $, 
since necessarily $M_{\sigma} \subseteq \overline{(PS)}_{\sigma}$. If $M_{\sigma}=\emptyset$, then by definition and with $W=\{ J\leq \sigma \} $ 
$$
\; \exists \; T,\varepsilon >0 \wedge \Psi \in \Pi F
\; : \; 
\Psi(W,T) \subset \{ J<\sigma - \varepsilon  \}. 
$$
In particular $\Psi(0,t)=0$ for all times is impossible, whence there exists $t\geq 0$ such, that
$$
\Psi(0,t)=0 
\; \text{ and } \; 
\partial_{t} \Psi(0,t)=G(\Psi(0,t))=G(0) \neq 0.
$$
Let $y_{0}=-\epsilon G(0)$ and compute
$
y(t)
=
y_{0}+tG(y_{0})+o_{t}(t).
$
We then find
\begin{equation*}
\begin{split}
\partial_{t}J(y(t))
%
= &
\vert G(0) \vert^{2}
(\epsilon-t)
+
o_{t+\epsilon}(t+\epsilon).
\end{split}
\end{equation*}
Hence for $\epsilon>0$ sufficiently small 
the energy $J$ is increasing along $y$ for a short time. This of course contradicts \eqref{general_form_of_a_flow}, since $y_{0}\not \in \{ \partial J=0 \}$.  
\item The case 
$\dim(Y^{-})=\infty$ in Lemma \ref{non_degenerate_implies_strongly_critical_2} and hence the question, whether or not a critical point at infinity can have an infinite index, is more delicate.  For instance does 
  $\lambda=\infty,\, y=0$  for 
$$
J(u)=\sigma + \frac{1}{\lambda^{2}} - \vert y \vert^{2}
\; \text{ on a Hilbertspace  } \; Y
\; \text{ with } \; \dim(Y)=\infty
$$
represent an obstacle to energetic deformation, i.e. for any energy decreasing flow of type $\Psi \in \Pi F$? 
We conjecture, that the answer is no. In any case infinite indices do not occur in our framework.
\end{enumerate}
\end{remark}

We finally characterize $M_{\sigma}$ as an obstacle to energetic deformation as follows.
\begin{proposition}\label{obstacling}
Let $\sigma_{1}\leq \sigma_{2}$. Then
every $\sigma_{2}$-reducible $W$ is also $\sigma_{1}$-reducible, if and only if
$$
\; \forall \; \sigma_{1}<\sigma\leq \sigma_{2}
\; : \; 
M_{\sigma}=\emptyset.
$$
\end{proposition}	
\begin{proof}
The case $\sigma_{1}=\sigma_{2}$ trivially holds true. Hence let $\sigma_{1}<\sigma_{2}$.

Suppose, that every $\sigma_{2}$-reducible $W$ is also $\sigma_{1}$-reducible. Since for $\sigma_{1}<\sigma\leq \sigma_{2}$  trivially every $\sigma$-reducible $W$ is also $\sigma_{2}$-reducible, we find, that every $\sigma$-reducible $W$ is also $\sigma_{1}$-reducible. Consider hence for $\sigma_{1}<\sigma\leq \sigma_{2}$ an arbitrary $\sigma$-reducible $W$ and choose $\varepsilon >0$ such, that 
$\sigma_{1}+\varepsilon<\sigma - \varepsilon $. Since $W$ is also $\sigma_{1}$-reducible, we find $\Psi \in \Pi F$ and $T\geq 0$ such, that $J(\Psi(W,T))\leq \sigma_{1}+\varepsilon$, cf. Definition \ref{reducible}. As a consequence and, since $\sigma_{1}+\varepsilon <\sigma-\varepsilon$, the empty set 
$\emptyset \subseteq E_{\sigma}$ is $\sigma$-capturing, cf. Definition \ref{capturing}, and then trivially strongly $\sigma$-critical as well, cf. Definition \ref{strongly_critical}. By uniqueness of a minimal, strongly $\sigma$-critical $M_{\sigma}\subseteq E_{\sigma}$ we conclude 
$M_{\sigma}=\emptyset$. 

Vice versa suppose, that 
$
\; \forall \; \sigma_{1}<\sigma\leq \sigma_{2}
\; : \; 
M_{\sigma}=\emptyset
$
and consider
$$
s
=
\inf
\{ 
\sigma_{1}\leq \sigma \leq \sigma_{2} 
\; \mid \; 
\; \forall \; W \; \sigma_{2}\text{-reducible}\; : \; 
W\; \sigma\text{-reducible}.
\} 
$$
In view of Definition \ref{reducible} we then have to show $s=\sigma_{1}$. Arguing by contradiction we assume
$\sigma_{1}<s\leq \sigma_{2}$ and find, that every $\sigma_{2}$-reducible $W$ is also $s$-reducible. Since $M_{s}=\emptyset$ 
is strongly $s$-critical and by Definition \eqref{strongly_critical} also $s$-capturing, we find 
$\varepsilon>0$ and for every $s$-reducible $W$ some $\Psi\in \Pi F$ and $T\geq 0$ such, that
$\Psi(W,T)\subseteq \{ J< s-2\varepsilon \}$. But this implies, that every $s$-reducible $W$ is also $(s-\varepsilon)$-reducible
and therefore every $\sigma_2$-reducible $W$ is also $(s-\varepsilon )$-reducible. This contradicts the minimality of $s$ leading to the desired contradiction. 
\end{proof}
From Proposition \ref{obstacling} we recover the classical deformation lemma. 
\begin{lemma}
Let $\sigma_{1} < \sigma_{2}$ and suppose, 
that 
$$
\; \forall \; \sigma_{1}\leq \sigma \leq \sigma_{2}
\; : \; 
M_{\sigma}=\emptyset.
$$
Then 
$
\{ J\leq \sigma_{2} \} 
\xhookrightarrow{\;\;\;\;\;} 
\{ J\leq \sigma_{1} \}  
$
as a weak deformation retract. 
\end{lemma}	
\begin{proof}
Clearly $\{ J\leq \sigma_{2} \}$ is $\sigma_{2}$-reducible and by virtue of Proposition \ref{obstacling} also $\sigma_{1}$-reducible. Since $M_{\sigma_{1}}=\emptyset$ is strongly $\sigma_{1}$-critical and hence $\sigma_{1}$-capturing, cf. Definitions \ref{capturing}, \ref{strongly_critical}, we find
$\varepsilon >0$ and for $W=\{ J\leq \sigma_{2} \} $ a deformation $\Psi\in \Pi F$ and $T\geq 0$ such, that 
$\Psi(W,T)\subseteq \{ J<\sigma_{1}-\varepsilon \}$. And, since the flow $\Psi$ does not increase energy, clearly
$
\Psi : 
 \{ J\leq \sigma_{1} \} \times [0,T]
\longrightarrow 
\{ J\leq \sigma_{1} \}  
$.
\end{proof}

\section{Preliminaries}\label{section_preliminaries}
Let us start with a quantification of the deficit for some $\varphi_{a,\lambda}$ from solving \eqref{the_equation}.
\begin{lemma}\label{lem_emergence_of_the_regular_part}
There holds 
$
L_{g_{0}}\varphi_{a, \lambda}
=
O
(
\varphi_{a, \lambda}^{\frac{n+2}{n-2}}
).
$
More precisely on a geodesic ball $B_{\alpha}( a )$ for $\alpha>0$ small 
\begin{equation*}
L_{g_{0}}\varphi_{a, \lambda}
= 
4n(n-1)\varphi_{a, \lambda}^{\frac{n+2}{n-2}}
-
2nc_{n}
r_{a}^{n-2}((n-1)H_{a}+r_{a}\partial_{r_{a}}H_{a}) \varphi_{a, \lambda}^{\frac{n+2}{n-2}} 
+
\frac{u_{a}^{\frac{2}{n-2}}R_{g_{a}}}{\lambda}\varphi_{a, \lambda}^{\frac{n}{n-2}}
+
o(r_{a}^{n-2})\varphi_{a, \lambda}^{\frac{n+2}{n-2}},
\end{equation*}
where  $r_{a}=d_{g_{a}}(a, \cdot)$. In particular
\begin{enumerate}[label=(\roman*)]
 \item \quad 
$
L_{g_{0}}\varphi_{a, \lambda}
= 
4n(n-1)
[1
-
\frac{c_{n}}{2}r_{a}^{n-2}(
H_{a}(a) + n \nabla H_{a}(a)x
)
]
\varphi_{a, \lambda}^{\frac{n+2}{n-2}} 
 +
O
\begin{pmatrix}
\lambda^{-2}\varphi_{a,\lambda}
\end{pmatrix}
\; \text{ for }\; n=5
;
$
\item \quad 
$L_{g_{0}}\varphi_{a,\lambda}=
4n(n-1)\varphi_{a,\lambda}^{\frac{n+2}{n-2}}
=
4n(n-1)[1+\frac{c_{n}}{2}W(a)\ln r]\varphi_{a,\lambda}^{\frac{n+2}{n-2}}
+
O
(\lambda^{-2}\varphi_{a,\lambda}) \; \text{ for } n = 6;$
\item \quad 
$L_{g_{0}}\varphi_{a,\lambda}=
4n(n-1)\varphi_{a,\lambda}^{\frac{n+2}{n-2}}
=
O
(\lambda^{-2}\varphi_{a,\lambda}) 
\; \text{ for }\; n\geq 7.
$
\end{enumerate}
The expansions stated above persist upon taking $\lambda \partial_{\lambda}$ and $\frac{\nabla_{a}}{\lambda}$ derivatives.
\end{lemma}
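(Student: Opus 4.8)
The plan is to use conformal covariance to move the computation onto the background metric $g_{a}$, with respect to which $\varphi_{a,\lambda}$ has a fully explicit profile built from the Green's function $G_{a}$, and then exploit that $G_{a}$ is $L_{g_{a}}$-harmonic off $a$ to collapse $L_{g_{a}}$ applied to that profile to one closed expression. Write $\varphi_{a,\lambda}=u_{a}v_{a,\lambda}$ with $v_{a,\lambda}=\big(\tfrac{\lambda}{1+\lambda^{2}h}\big)^{\frac{n-2}{2}}$ and $h=\gamma_{n}G_{a}^{\frac{2}{2-n}}$, so that $h^{\frac{2-n}{2}}$ is a constant multiple of $G_{a}$ and, by the normalization of $\gamma_{n}$ and the stated expansion of $G_{a}$, one has $h=r_{a}^{2}\big(1+r_{a}^{n-2}H_{a}\big)^{\frac{2}{2-n}}$ near $a$. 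By conformal covariance of the conformal Laplacian, $L_{g_{0}}\varphi_{a,\lambda}=u_{a}^{\frac{n+2}{n-2}}L_{g_{a}}v_{a,\lambda}$. On $M\setminus\{a\}$ the relation $L_{g_{a}}G_{a}=0$ reads $\Delta_{g_{a}}\big(h^{\frac{2-n}{2}}\big)=\tfrac{R_{g_{a}}}{c_{n}}h^{\frac{2-n}{2}}$, which upon expanding the left side by the chain rule and solving for $\Delta_{g_{a}}h$ becomes the pointwise identity $\Delta_{g_{a}}h=\tfrac{n}{2h}|\nabla_{g_{a}}h|_{g_{a}}^{2}+\tfrac{2}{(2-n)c_{n}}R_{g_{a}}h$. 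Feeding this into the direct expansion of $\Delta_{g_{a}}v_{a,\lambda}$ eliminates the $\Delta_{g_{a}}h$ contributions, and after elementary algebra one reaches the exact identity, valid on all of $M$ (both sides smooth near $a$),
\begin{equation*}
L_{g_{0}}\varphi_{a,\lambda}=n(n-1)\,\frac{|\nabla_{g_{a}}h|_{g_{a}}^{2}}{h}\,\varphi_{a,\lambda}^{\frac{n+2}{n-2}}+\frac{u_{a}^{\frac{2}{n-2}}R_{g_{a}}}{\lambda}\,\varphi_{a,\lambda}^{\frac{n}{n-2}}.
\end{equation*}

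Since $h$ is bounded on $M$, one has $\varphi_{a,\lambda}\gtrsim\lambda^{-\frac{n-2}{2}}$, hence $\lambda^{-1}\varphi_{a,\lambda}^{\frac{n}{n-2}}\lesssim\varphi_{a,\lambda}^{\frac{n+2}{n-2}}$; as $\tfrac{|\nabla_{g_{a}}h|_{g_{a}}^{2}}{h}$ is a bounded smooth function on $M$, the crude estimate $L_{g_{0}}\varphi_{a,\lambda}=O\big(\varphi_{a,\lambda}^{\frac{n+2}{n-2}}\big)$ follows at once. For the precise expansion it remains to Taylor expand $\tfrac{|\nabla_{g_{a}}h|_{g_{a}}^{2}}{h}$ near $a$. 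Writing $h=r_{a}^{2}\phi$ with $\phi=(1+r_{a}^{n-2}H_{a})^{\frac{2}{2-n}}$ and using $|\nabla_{g_{a}}r_{a}|_{g_{a}}\equiv1$, one gets $\tfrac{|\nabla_{g_{a}}h|_{g_{a}}^{2}}{h}=4\phi+4r_{a}\partial_{r_{a}}\phi+\tfrac{r_{a}^{2}|\nabla_{g_{a}}\phi|_{g_{a}}^{2}}{\phi}$; since $\phi=1-\tfrac{2}{n-2}r_{a}^{n-2}H_{a}+\text{(higher order)}$, the first two terms combine to $4-\tfrac{8}{n-2}r_{a}^{n-2}\big((n-1)H_{a}+r_{a}\partial_{r_{a}}H_{a}\big)$ up to higher order, while the last term is $O(r_{a}^{2n-4})$. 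Multiplying by $n(n-1)$ and using $\tfrac{8n(n-1)}{n-2}=2nc_{n}$ gives exactly the stated main expansion.

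The refinements (i)--(iii) follow by inserting the dimension-dependent asymptotics: $H_{r,a}\in C^{2,\alpha}_{loc}$ and $H_{s,a}=O(r_{a})$, $O(\ln r_{a})$, $O(r_{a}^{6-n})$ for $n=5,6,\geq7$; and, in conformal normal coordinates for $g_{a}$, $R_{g_{a}}(a)=0$ together with $\nabla_{g_{a}}R_{g_{a}}(a)=0$, so that $R_{g_{a}}=O(r_{a}^{2})$, the Weyl tensor $W(a)$ entering for $n=6$ through the logarithmic term carried by $H_{a}$ (and the conformal-normal-coordinate expansion of $R_{g_{a}}$). Combined with the elementary pointwise bubble estimates $r_{a}^{k}\varphi_{a,\lambda}^{\frac{n+2}{n-2}}\lesssim\lambda^{-2}\varphi_{a,\lambda}$ for $k\geq4$ and $\lambda^{-1}R_{g_{a}}\varphi_{a,\lambda}^{\frac{n}{n-2}}\lesssim\lambda^{-2}\varphi_{a,\lambda}$, all non-explicit contributions get absorbed into $O(\lambda^{-2}\varphi_{a,\lambda})$ in dimensions $n\geq6$ (leaving the logarithmic term for $n=6$), while for $n=5$ the explicit term is produced by the linearization $(n-1)H_{a}+r_{a}\partial_{r_{a}}H_{a}=(n-1)H_{a}(a)+n\nabla H_{a}(a)x+O(r_{a}^{2})$ near $a$. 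For the persistence under $\lambda\partial_{\lambda}$ and $\tfrac{\nabla_{a}}{\lambda}$, note that $L_{g_{0}}$ is independent of $(a,\lambda)$, so these operators commute with $L_{g_{0}}$ and it suffices to differentiate the identities above: $\lambda\partial_{\lambda}$ acts only on the explicit $v_{a,\lambda}$-dependence, with $\lambda\partial_{\lambda}\varphi_{a,\lambda}=\tfrac{n-2}{2}\tfrac{1-\lambda^{2}h}{1+\lambda^{2}h}\varphi_{a,\lambda}=O(\varphi_{a,\lambda})$, whereas $\tfrac{\nabla_{a}}{\lambda}$ additionally differentiates $r_{a},H_{a},R_{g_{a}},u_{a}$, whose $a$-derivatives are bounded, and $\tfrac{\nabla_{a}}{\lambda}\varphi_{a,\lambda}=O(\varphi_{a,\lambda})$ by the standard concentration estimates, so the same bookkeeping reproduces the expansions with the same error orders.

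The conceptual core --- conformal covariance plus the harmonicity identity collapsing $L_{g_{a}}v_{a,\lambda}$ to the explicit expression above --- is short and clean. The bulk of the work, and the main obstacle, is the error analysis near $a$: checking that the $o(r_{a}^{n-2})$ and $O(\lambda^{-2}\varphi_{a,\lambda})$ remainders are genuinely of that size in every dimension requires careful handling of the limited regularity of the singular part $H_{s,a}$, of the conformal-normal-coordinate expansion of $R_{g_{a}}$ (including the Weyl-tensor contribution for $n=6$), and of the behaviour of all these corrections --- and of their $\lambda\partial_{\lambda}$ and $\tfrac{\nabla_{a}}{\lambda}$ derivatives --- weighed against $\varphi_{a,\lambda}^{\frac{n+2}{n-2}}$ and $\lambda^{-2}\varphi_{a,\lambda}$.
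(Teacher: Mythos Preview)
The paper does not give its own proof of this lemma; the entire proof reads ``Cf.\ Lemma~\ref{I-lem_emergence_of_the_regular_part} in \cite{MM1}.'' So there is nothing substantive to compare against here: your proposal supplies an actual argument where the paper only supplies a citation.

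That said, the route you take --- conformal covariance $L_{g_{0}}(u_{a}v)=u_{a}^{\frac{n+2}{n-2}}L_{g_{a}}v$, the identity $L_{g_{a}}G_{a}=0$ off $a$ rewritten as an equation for $\Delta_{g_{a}}h$, and the resulting closed-form
\[
L_{g_{0}}\varphi_{a,\lambda}=n(n-1)\,\frac{|\nabla_{g_{a}}h|_{g_{a}}^{2}}{h}\,\varphi_{a,\lambda}^{\frac{n+2}{n-2}}+\frac{u_{a}^{\frac{2}{n-2}}R_{g_{a}}}{\lambda}\,\varphi_{a,\lambda}^{\frac{n}{n-2}}
\]
followed by a Taylor expansion of $|\nabla h|^{2}/h$ near $a$ --- is exactly the standard derivation and is almost certainly what \cite{MM1} does as well (this computation goes back to the bubble analysis in \cite{bab} and \cite{may-cv}). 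Your algebra checks out: the substitution of $\Delta_{g_{a}}h$ into $\Delta_{g_{a}}v_{a,\lambda}$ collapses cleanly, the constant $\tfrac{c_{n}n(n-2)}{4}=n(n-1)$ is correct, and the expansion of $|\nabla h|^{2}/h$ via $h=r_{a}^{2}\phi$ gives precisely the $2nc_{n}r_{a}^{n-2}((n-1)H_{a}+r_{a}\partial_{r_{a}}H_{a})$ term. The dimension-specific refinements and the persistence under $\lambda\partial_{\lambda}$, $\tfrac{\nabla_{a}}{\lambda}$ are handled correctly at the level of a sketch; be aware that the paper's stated normalizations of $G_{a}$ and $\gamma_{n}$ are not entirely internally consistent (compare the two displayed values of $\gamma_{n}$), so do not be alarmed if tracking constants literally produces a stray power of $\gamma_{n}$ --- the structure of your argument is sound.
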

\begin{proof}
Cf. Lemma \ref{I-lem_emergence_of_the_regular_part} in \cite{MM1}.
\end{proof}

Thereby we  may describe the blow-up behaviour of Palais-Smale sequences for 
\eqref{the_functional}.
 \begin{proposition}\label{blow_up_analysis}
 Let $(u_{m})_{m} \subset W^{1,2}(M,g_0)$ be a sequence with $u_m \geq 0$ and $k_{m}=k_{u_{m}}=1$  
 satisfying 
 \begin{equation}\label{def_PS}
 J(u_{m})=r_{u_{m}}\longrightarrow r_{\infty} \; \text{ and }\;  \partial J(u_{m})\longrightarrow 0
 \;\text{ in }\; W^{-1,2}(M,g_0)
 \end{equation}
 for some $ r_{\infty}>0$.  Then  up to a subsequence there exist 
 $$
  u_\infty : M \longrightarrow [0,\infty)
 \; \text{ smooth with  } \;
 \partial J(u_{\infty})=0,
 $$  
 $q \in \mathbb{N}_{0}$ and for $i=1, \ldots,q$ sequences
 \begin{equation*}\begin{split}
\alpha_{i,m}\subset \R_{+}
\; \text{ and } \;
 M \supset (a_{i,m})\longrightarrow a_{i_{\infty}}
, \quad \R_{+}\supset \lambda_{i,m} \longrightarrow \infty
 \; \text{ as }\; m\longrightarrow  \infty
 \end{split}\end{equation*}
for some $ a_{i_{\infty}}\in M $  such, that 
 $u_{m}
 =
u_{\infty }
 +
\sum_{i}\alpha_{i,m}\varphi_{a_{i,m},\lambda_{i,m}}+v_{m}$
 and
 \begin{equation}\label{alpha_i_m_condition}
 \Vert v_{m} \Vert \longrightarrow 0 
 \; \text{ and }\; \
 \frac{r_{u_{m}}K(a_{i,m})\alpha_{i,m}^{\frac{4}{n-2}}}{4n(n-1)}\longrightarrow 1
 \end{equation} 
 and  for each pair $1\leq i<j\leq q$  there holds 
\begin{equation}\label{eq:eij}
\varepsilon_{i,j}
=
(
\frac{\lambda_{j}}{\lambda_{i}}
+
\frac{\lambda_{i}}{\lambda_{j}}
+
\lambda_{i}\lambda_{j}\gamma_{n}G_{g_{0}}^{\frac{2}{2-n}}(a _{i},a _{j})
)^{\frac{2-n}{2}}
,\; 
 \gamma_{n}=(4n(n-1))^{\frac{2}{2-n}}.
\end{equation}
\end{proposition}
\begin{proof}
Cf. Proposition \ref{I-blow_up_analysis} in \cite{MM1}.
\end{proof}
\begin{remark}\label{remark_blow_up_analysis} We remark, that 
\begin{enumerate}[label=(\roman*)]
\item  $u_{m} \xrightharpoondown{\quad} u_{\infty}$ weakly implies, that necessarily and in addition to 
\eqref{alpha_i_m_condition} there holds
\begin{equation}\label{u_infinity_equation}
\begin{split}
L_{g_{0}}u_{\infty}
= &
r_{u_{m}}K u_{\infty}^{\frac{n+2}{n-2}}+o(1),
\end{split}
\end{equation}
i.e. $L_{g_{0}}u_{\infty}=r_{\infty}Ku_{\infty}^{\frac{n+2}{n-2}}$ and therefore
$
J(u_{\infty})
=
r_{\infty}(\int Ku_{\infty}^{\frac{2n}{n-2}}d\mu_{g_{0}})^{\frac{2}{n}}.
$
\item for the limiting energy $J_{\infty}=\lim_{m \to \infty }J(u_{m})$ we then obtain
\begin{equation}\label{limiting_energy_implicit}
\begin{split}
J_{\infty}
= &
\frac
{
\int L_{g_{0}}u_{\infty}u_{\infty}d\mu_{g_{0}}+c_{n}\sum_{i}\alpha_{i_{\infty}}^{2}
}
{
(
\int Ku_{\infty}^{\frac{2n}{n-2}}+\frac{c_{n}}{4n(n-1)}\sum_{i}K(a_{i_{\infty}})\alpha_{i_{\infty}}^{\frac{2n}{n-2}}
)^{\frac{n-2}{n}}
},
\end{split}
\end{equation}
where 
$$
c_{n}
=
\lim_{\lambda \to \infty }\int L_{g_{0}}\varphi_{a,\lambda}\varphi_{a,\lambda}d\mu_{g_{0}}
=
4n(n-1)\int_{\R^{n}}\frac{1}{(1+r^{2})^{n}},
$$
cf. \eqref{eq:bubbles} and Lemma \ref{lem_emergence_of_the_regular_part}, and 
\begin{equation}\label{alpha_i_infty_condition}
\begin{split}
\alpha_{i_{\infty}}
=
(\frac{4n(n-1)}{r_{\infty}K(a_{i_{\infty}})})^{\frac{n-2}{4}},
\end{split}
\end{equation} 
cf. \eqref{alpha_i_m_condition}. Inserting \eqref{alpha_i_infty_condition} into \eqref{limiting_energy_implicit}
we conclude
\begin{equation}\label{limiting_energy_explicit}
\begin{split}
J_{\infty}
= &
(J(u_{\infty})+c_{n}\sum_{i}(\frac{4n(n-1)}{K(a_{i_{\infty}})^{\frac{n-2}{2}}}))^{\frac{2}{n}}.
\end{split}
\end{equation}
\item restricting to $X=\{ \Vert \cdot \Vert =1\} $ instead of normalising to $k=1$ is the same up to
\begin{enumerate}[label=(\roman*)]
\item[(1)] $J(u_{m})=k_{u_{m}}^{\frac{2-n}{n}}\longrightarrow k_{\infty}^{\frac{n}{2-n}}$, cf. \eqref{def_PS}
\item[(2)] $\frac{K(a_{i,m})\alpha_{i,m}^{\frac{4}{n-2}}}{4n(n-1)k_{u_{m}}}= 1+o(1)$, cf. 	\eqref{alpha_i_m_condition}
\item[(3)] $L_{g_{0}}u_{\infty}=\frac{K}{k_{u_{m}}}u_{\infty}^{\frac{n+2}{n-2}}+o(1)$, cf. \eqref{u_infinity_equation}.
\end{enumerate}
and $\Vert \cdot \Vert=1$ necessitates 
\begin{equation}\label{normalisation_to_restriction}
\begin{split}
1=c_{n}\sum_{i}\alpha_{i}^{2}+\int L_{g_{0}}u_{\infty}u_{\infty}d\mu_{g_{0}}.
\end{split}
\end{equation}

\item as a consequence of \eqref{limiting_energy_explicit} the number of bubbles and
as a consequence of  \eqref{limiting_energy_explicit} and \eqref{normalisation_to_restriction} 
norm and  energy of the weak limit  $u_{\infty}$ of a Palais-Smale sequence are bounded.
\end{enumerate}
\end{remark}

Proposition \ref{blow_up_analysis} justifies to consider the following subset of peaked function and look for zero weak limit Palais-Smale sequences thereon only.

\begin{definition}\label{V_p_e}
For $\varepsilon>0,\; q\in \N$ and $ u\in W^{1,2}(M,g_{0})$ let
\begin{enumerate}[label=(\roman*)]
\item  
$ _{} $ \vspace{-21pt}
\begin{equation*}
\begin{split}
\hspace{-58pt}
A_{u}(q, \varepsilon)
 =
 \{ 
 ( \alpha^{i}, \lambda_{i}, a_{i})\in \R^{q}\times \R^{q}_{+} \times M^{q} \mid 
& 
\underset{i\neq j}{\forall\;}\;
  \lambda_{i}^{-1}, \lambda_{j}^{-1}, \varepsilon_{i,j}, \\
&
 \vert 1-\frac{r\alpha_{i}^{\frac{4}{n-2}}K(a_{i})}{4n(n-1)k}\vert,
 \Vert u-\alpha^{i}\varphi_{a_{i}, \lambda_{i}}\Vert
 \leq\varepsilon
 \}
\end{split}
\end{equation*}
 \item \quad
 $ 
 V(q, \varepsilon)
 = 
 \{
 u\in W^{1,2}(M,g_{0})  
 \mid
 A_{u}(q, \varepsilon)\neq \emptyset
 \} 
 $
\end{enumerate} 
\end{definition}
However, 
for a precise analysis of $J$ on $V(q,\varepsilon)$  it is convenient to make the representation of its elements unique.
 \begin{proposition}\label{prop_optimal_choice} 
 For every $\varepsilon_{0}>0$  there exists $\varepsilon_{1}>0$ such, that  for $u\in V( q, \varepsilon)$ 
 with $\varepsilon<\varepsilon_{1}$
 \begin{equation*}\begin{split}
 \inf_
 {
 (\tilde\alpha_{i}, \tilde a_{i}, \tilde\lambda_{i})\in A_{u}(q,2\varepsilon_{0}) 
 }
 \Vert
 u
 -
 \tilde\alpha^{i}\varphi_{\tilde a_{i}, \tilde \lambda_{i}}
 \Vert^{2} 
 \end{split}\end{equation*}
 admits a unique minimizer $(\alpha_{i},a_{i}, \lambda_{i})\in A_{u}(q, \varepsilon_{0})$ 
 depending smoothly on $u$
 and we set 
 \begin{equation}\begin{split} \label{eq:v}
 \varphi_{i}=\varphi_{a_{i}, \lambda_{i}}, \quad v=u-\alpha^{i}\varphi_{i}, \quad K_{i}=K(a_{i}).
 \end{split}\end{equation}
 \end{proposition}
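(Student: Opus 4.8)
The plan is to realise the minimiser as the nearest point of $u$ on the noncompact submanifold $\mathcal M_{q}\subset(W^{1,2},\langle\cdot,\cdot\rangle)$ parametrised by the $q$-bubble configurations $\tilde\alpha^{i}\varphi_{\tilde a_{i},\tilde\lambda_{i}}$, and to extract existence, interiority, uniqueness and smooth dependence from a uniform tubular neighbourhood estimate for $\mathcal M_{q}$. Writing, with $\varphi_{i}=\varphi_{\tilde a_{i},\tilde\lambda_{i}}$,
\begin{equation*}
f_{u}(\tilde\alpha,\tilde a,\tilde\lambda)=\Vert u-\tilde\alpha^{i}\varphi_{i}\Vert^{2}=\Vert u\Vert^{2}-2\langle u,\tilde\alpha^{i}\varphi_{i}\rangle+\Vert\tilde\alpha^{i}\varphi_{i}\Vert^{2},\qquad(\tilde\alpha,\tilde a,\tilde\lambda)\in A_{u}(q,2\varepsilon_{0}),
\end{equation*}
I first note that, since $a\mapsto u_{a}$ and hence $(a,\lambda)\mapsto\varphi_{a,\lambda}$ is smooth, $f_{u}$ is a smooth function of finitely many real parameters which, being merely quadratic in $u$, also depends smoothly on $u$. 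I would then pass to the conformally natural coordinates $(\tilde\alpha_{i},\tilde b_{i},\tilde\ell_{i})$ with $\tilde\ell_{i}=\ln\tilde\lambda_{i}$ and $\tilde b_{i}$ the $\tilde\lambda_{i}$-rescaled conformal normal coordinates of $\tilde a_{i}$, in which the variations $\varphi_{i}$, $\lambda_{i}\partial_{\lambda_{i}}\varphi_{i}$, $\frac{\nabla_{a_{i}}}{\lambda_{i}}\varphi_{i}$ all have norms bounded above and below by universal constants, and record from Lemma~\ref{lem_emergence_of_the_regular_part} and the standard interaction estimates that inner products of variations attached to \emph{different} bubbles are $O(\varepsilon_{i,j})$, while those attached to the \emph{same} bubble $i$ form, up to errors controlled via Lemma~\ref{lem_emergence_of_the_regular_part} and small once $\lambda_{i}\gg1$, a fixed uniformly positive definite Gram matrix (the nondegeneracy of the standard bubble).

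\textbf{Existence and interiority.} Since $u\in V(q,\varepsilon)$ provides a reference point $p_{0}\in A_{u}(q,\varepsilon)$, we have $\inf f_{u}\le f_{u}(p_{0})<\varepsilon^{2}$. Along a minimising sequence the $\tilde\alpha_{i}$ stay in a fixed compact subinterval of $(0,\infty)$ (because $f_{u}<\varepsilon^{2}$ forces $\Vert\tilde\alpha^{i}\varphi_{i}\Vert\simeq(\sum_{i}\tilde\alpha_{i}^{2})^{1/2}$ to be comparable to $\Vert u\Vert$) and $\tilde a_{i}\in M$ is automatically precompact, so the only possible loss of compactness is $\tilde\lambda_{i}\to\infty$ for some $i$; in that case $\varphi_{i}\rightharpoonup0$ and, since the remaining at most $q-1$ bubbles cannot match the $q$ nearly orthogonal bubbles that $u$ is $\varepsilon$-close to, $\liminf f_{u}\ge c_{0}>\varepsilon^{2}$ — so such sequences are not minimising and a minimiser $(\alpha,a,\lambda)$ exists. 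It lies in $A_{u}(q,\varepsilon_{0})$, hence interior to $A_{u}(q,2\varepsilon_{0})$: the bound $\Vert u-\alpha^{i}\varphi_{i}\Vert<\varepsilon<\varepsilon_{0}$ is strict, and since the configuration of $(\alpha,a,\lambda)$ is then $2\varepsilon$-close in $W^{1,2}$ to the highly concentrated, well separated and well scaled configuration of $p_{0}$, the quantitative uniqueness of bubble decompositions (cf. Proposition~\ref{blow_up_analysis}) forces $\lambda_{i}^{-1}$, $\varepsilon_{i,j}$ and $\vert1-\frac{r\alpha_{i}^{\frac{4}{n-2}}K(a_{i})}{4n(n-1)k}\vert$ to be $O(\varepsilon)<\varepsilon_{0}$, provided $\varepsilon<\varepsilon_{1}(\varepsilon_{0})$.

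\textbf{Euler--Lagrange, uniqueness and smoothness.} At the interior minimiser $f_{u}$ is stationary, and as $\varphi_{i}$ is independent of $\tilde\alpha$ and $\tilde\alpha_{i}>0$, differentiating in $\tilde\alpha_{i}$, $\tilde\ell_{i}$, $\tilde b_{i}$ yields the orthogonalities $\langle v,\varphi_{i}\rangle=\langle v,\lambda_{i}\partial_{\lambda_{i}}\varphi_{i}\rangle=\langle v,\frac{\nabla_{a_{i}}}{\lambda_{i}}\varphi_{i}\rangle=0$ with $v=u-\alpha^{i}\varphi_{i}$ as in \eqref{eq:v}. For uniqueness I restrict to $S=\{f_{u}<\delta_{0}\}$ with $\varepsilon^{2}<\delta_{0}$ small: by the argument of the previous step, on $S$ one automatically has $\Vert v\Vert,\ \varepsilon_{i,j},\ \lambda_{i}^{-1}=O(\sqrt{\delta_{0}})$, so $S$ lies in a single small coordinate ball $B$ about the labels of $p_{0}$, on which
\begin{equation*}
\tfrac12\,\partial^{2}f_{u}=\big(\text{Gram matrix of }\partial_{\tilde\alpha_{i}}\psi,\ \partial_{\tilde\ell_{i}}\psi,\ \partial_{\tilde b_{i}}\psi\big)-\langle v,\partial^{2}\psi\rangle,\qquad\psi=\tilde\alpha^{i}\varphi_{i},
\end{equation*}
is positive definite, uniformly after normalising the variations to unit size: its Gram part is uniformly positive definite, block diagonal up to $O(\varepsilon_{i,j})$, and the remainder is $O(\Vert v\Vert)=O(\sqrt{\delta_{0}})$. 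Thus $f_{u}$ is strictly convex on the convex ball $B\supseteq S$ and admits at most one critical point in $S$; as every minimiser lies in $S$, where $f_{u}<\varepsilon^{2}<\delta_{0}$, the minimiser is unique, and it is the only solution of the orthogonality system in $S$. Finally the map $\mathcal F(u;\tilde\alpha,\tilde a,\tilde\lambda)=(\langle v,\varphi_{i}\rangle,\langle v,\lambda_{i}\partial_{\lambda_{i}}\varphi_{i}\rangle,\langle v,\frac{\nabla_{a_{i}}}{\lambda_{i}}\varphi_{i}\rangle)$ is smooth, vanishes at the minimiser, and has $\partial_{(\tilde\alpha,\tilde a,\tilde\lambda)}\mathcal F=\tfrac12\partial^{2}f_{u}$ invertible there, so the implicit function theorem delivers the smooth dependence $(\alpha,a,\lambda)=(\alpha,a,\lambda)(u)$.

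\textbf{Main difficulty.} The crux is the uniform positive definiteness of $\partial^{2}f_{u}$ on $S$ — equivalently, a uniform bound on the second fundamental form and normal injectivity radius of $\mathcal M_{q}$ — which rests on the nondegeneracy of the standard bubble (through Lemma~\ref{lem_emergence_of_the_regular_part}) to pin down the diagonal blocks, on the interaction estimates to bound the off-diagonal terms by $O(\varepsilon_{i,j})$, and on $\langle v,\partial^{2}\psi\rangle=O(\Vert v\Vert)$; keeping the correct normalisations of the variations and the interaction bookkeeping under control, together with the quantitative uniqueness of bubble decompositions invoked both for interiority and for localising $S$, is where essentially all of the effort goes.
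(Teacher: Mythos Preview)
Your argument is essentially correct and follows the standard route that the paper defers to (``Cf.\ Appendix~A in \cite{bc}''): realise the minimiser as a nearest-point projection onto the $q$-bubble manifold, obtain the orthogonality relations as Euler--Lagrange equations, and deduce uniqueness and smooth dependence from uniform positivity of the Hessian (equivalently, the implicit function theorem applied to $\mathcal F$). One quibble: the ``quantitative uniqueness of bubble decompositions'' you invoke twice --- to force the minimiser into $A_u(q,\varepsilon_0)$ and to localise $S$ in a single coordinate ball --- is not the content of Proposition~\ref{blow_up_analysis} (which concerns Palais--Smale sequences); what you actually need is the elementary but separate fact that two $q$-bubble configurations whose sums are $W^{1,2}$-close must, after relabelling, have close parameters, and this is where the near-orthogonality of the $\phi_{k,i}$ (Lemma~\ref{lem_interactions}) is used directly rather than through Proposition~\ref{blow_up_analysis}.
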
 
 \begin{proof}
Cf. Appendix A in \cite{bc}.
 \end{proof}
 
For the sake of brevity and recalling  \eqref{def_derivatives_g_0} we denote e.g.
$$K_{i}=K(a_{i}),\;\nabla K_{i}=\nabla K(a_{i}),\;\Delta K_{i}=\Delta K(a_{i})$$
for a  set of points $\{a_i\}_i \subset M$  and for $k,l=1,2,3$ and $ \lambda_{i} >0, \, a _{i}\in M, \,i= 1, \ldots,q$ we let
\begin{enumerate}[label=(\roman*)]
 \item \quad 
$\varphi_{i}=\varphi_{a_{i}, \lambda_{i}}$ and $(d_{1,i},d_{2,i},d_{3,i})=(1,-\lambda_{i}\partial_{\lambda_{i}}, \frac{1}{\lambda_{i}}\nabla_{a_{i}})$
 \item \quad
$\phi_{1,i}=\varphi_{i}, \;\phi_{2,i}=-\lambda_{i} \partial_{\lambda_{i}}\varphi_{i}, \;\phi_{3,i}= \frac{1}{\lambda_{i}} \nabla_{ a _{i}}\varphi_{i}
,\; \text{ in particular }\;
\phi_{k,i}=d_{k,i}\varphi_{i}
$
 \item \quad 
$
\langle \phi_{k,i}\rangle
=
span\{\phi_{k,i}\;:\; k=1,2,3\; \text{ and }\; 1\leq i \leq q\},
$
\end{enumerate}
in particular
$\nabla_{a_{i}}\varphi_{i}
= 
(\nabla_{g_{0}})_{a_{i}}\varphi_{a_{i},\lambda_{i}}
$
pointwise, i.e.
\begin{equation*}
\begin{split}
\forall\; x\in M \; \text{ and } \; w\in T_{\cdot}M\;:\;
\langle \nabla_{a_{i}}\varphi_{i}(x),w(a_{i})\rangle_{g_{0}}
=
d_{a_{i}}\varphi_{a_{i},\lambda_{i}}(x)w(a_{i}).
\end{split}
\end{equation*}
With this notation the term  
$$v=u-\alpha^{i}\varphi_{i}$$
from Proposition \ref{prop_optimal_choice}
is orthogonal to
$
 \langle \phi_{k,i}\rangle 
$ 
with respect to the scalar product 
 \begin{equation*}\begin{split}
 \langle \cdot, \cdot\rangle_{L_{g_{0}}}
 =
 \langle L_{g_{0}}\cdot,\cdot\rangle_{L^{2}_{g_{0}}}
 \end{split}\end{equation*}
and we define for  $u\in V( q, \varepsilon)$ its complement
 \begin{equation*} 
 H_{u}( q, \varepsilon)
 =
 \langle 
\phi_{k,i} \rangle
 ^{\perp_{L_{g_{0}}}}.
 \end{equation*}
A precise  analysis of $J$ on $V(q,\varepsilon)$ was performed in \cite{MM1} by testing the variation $\partial J$ separately with the bubbles 
$\varphi_{i}$ and their derivatives $-\lambda_{i}\partial_{\lambda_{i}}\varphi_{i}, \frac{1}{\lambda_{i}}\nabla_{a_{i}}\varphi_{i}$ on the one hand 
and orthogonally to them, i.e. with elements of $H_{u}(q,\varepsilon)$ on the other. 

\

Recalling \eqref{eq:eij}  we collect below some principal interactions over various integrals involving $ \phi_{k,i} $, which clearly appear in the gradient testing or expansion of the energy $J$ itself.
Note, that $ \Vert \phi_{k,i}\Vert \simeq 1 $. 
\begin{lemma}\label{lem_interactions}
For $k,l=1,2,3$ and $i,j = 1, \ldots,q$ and we have with constants 
$b_{k},c_{k}>0$ 
\begin{enumerate}[label=(\roman*)]
 \item \quad
$ \vert \phi_{k,i}\vert, 
  \vert \lambda_{i}\partial_{\lambda_{i}}\phi_{k,i}\vert,
  \vert \frac{1}{\lambda_{i}}\nabla_{a_{i}} \phi_{k,i}\vert
  \leq 
  C \varphi_{i}
$
 \item \quad
$ 
\int \varphi_{i}^{\frac{4}{n-2}} \phi_{k,i}\phi_{k,i}d\mu_{g_{0}}
=
c_{k}\cdot id
+
O(\frac{1}{\lambda_{i}^{2}}), \;c_{k}>0
$
\item \quad 
for  $i\neq j$
\begin{equation*}
\int \varphi_{i}^{\frac{n+2}{n-2}}\phi_{k,j}d\mu_{g_{0}}
= 
b_{k}d_{k,i}\varepsilon_{i,j}
=
\int \varphi_{i}d_{k,j}\varphi_{j}^{\frac{n+2}{n-2}}  d\mu_{g_{0}}
\end{equation*}
 \item \quad 
$
\int \varphi_{i}^{\frac{4}{n-2}} \phi_{k,i}\phi_{l,i}d\mu_{g_{0}}
= 
O(\frac{1}{\lambda_{i}^{2}})$
for $k\neq l$ and
$
\int \varphi_{i}^{\frac{n+2}{n-2}} \phi_{k,i}d\mu_{g_{0}}
=
O
\begin{pmatrix}
\lambda_{i}^{2-n} & \text{for } n\leq 5 \\
\frac{\ln \lambda_{i}}{\lambda_{i}^{4}} & \text{for } n=6 \\
\lambda_{i}^{4} &  \text{for } n\geq 7
\end{pmatrix} 
$
 for $k=2,3$
 \item \quad
$
\int \varphi_{i}^{\alpha }\varphi_{j}^{\beta} d\mu_{g_{0}}
=
O(\varepsilon_{i,j}^{\beta})
$
for $i\neq j,\;\alpha +\beta=\frac{2n}{n-2}, \; \alpha>\frac{n}{n-2}>\beta\geq 1$
\item \quad
$
\int \varphi_{i}^{\frac{n}{n-2}}\varphi_{j}^{\frac{n}{n-2}} d\mu_{g_{0}}
=
O(\varepsilon^{\frac{n}{n-2}}_{i,j}\ln \varepsilon_{i,j}) \; \text{ for }\; i\neq j
$
 \item \quad 
$
(1, \lambda_{i}\partial_{\lambda_{i}}, \frac{1}{\lambda_{i}}\nabla_{a_{i}})\varepsilon_{i,j}=O(\varepsilon_{i,j})
 \; \text{ for }\;
 i\neq j$. 
\end{enumerate}  
\end{lemma}
\begin{proof}
Cf. Lemma 3.4  in \cite{may-cv} or Lemma \ref{I-lem_interactions} in  \cite{MM1}.
\end{proof}

Let us comment on the following  lemmata, which describe the testing of $\partial J$. First a testing in an orthogonal direction is due to orthogonalities small.
 \begin{lemma} 
 \label{lem_testing_with_v} 
 For $u\in V(q,\varepsilon)$ with $k=1$ and $\nu\in H_{u}(q,\varepsilon)$ there holds
 \begin{equation*}
 \partial J(\alpha^{i}\varphi_{i})\nu
 = 
 O\bigg(
 \bigg[
 \sum_{r}\frac{\vert \nabla K_{r}\vert}{\lambda_{r}}
 +
 \sum_{r}\frac{1}{\lambda_{r}^{2}}
 +
 \sum_{r\neq s}\varepsilon_{r,s}^{\frac{n+2}{2n}}
 \bigg]
 \Vert \nu \Vert \bigg).  
 \end{equation*}
 \end{lemma}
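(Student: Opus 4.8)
\textbf{Proof plan for Lemma \ref{lem_testing_with_v}.}
The plan is to compute $\partial J(\alpha^i\varphi_i)\nu$ directly from the formula for $\partial J$, using the orthogonality relations satisfied by $\nu\in H_u(q,\varepsilon)$ to kill the leading-order terms. Recall that
\begin{equation*}
\partial J(u)\nu = \frac{2}{k^{\frac{n-2}{n}}}\Big[\int L_{g_0}u\,\nu\,d\mu_{g_0} - \frac{r}{k}\int K u^{\frac{n+2}{n-2}}\nu\,d\mu_{g_0}\Big].
\end{equation*}
With $u=\alpha^i\varphi_i$, the first integral becomes $\sum_i\alpha_i\langle\varphi_i,\nu\rangle_{L_{g_0}}$, which vanishes identically because $\nu\perp_{L_{g_0}}\langle\varphi_i\rangle$. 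So the whole contribution comes from the nonlinear term $\int K(\alpha^i\varphi_i)^{\frac{n+2}{n-2}}\nu\,d\mu_{g_0}$, and the strategy is to replace $L_{g_0}\varphi_i$ wherever possible via Lemma \ref{lem_emergence_of_the_regular_part}, so that the nonlinear term is matched, up to controlled errors, by a linear term $\langle\varphi_i^{\cdots},\nu\rangle$ that again dies by orthogonality.

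The key steps, in order. First I would expand $(\sum_i\alpha_i\varphi_i)^{\frac{n+2}{n-2}}$; since on $V(q,\varepsilon)$ the bubbles interact only through the $\varepsilon_{i,j}$, the cross terms contribute $\int\varphi_i^{\frac{4}{n-2}}\varphi_j\nu$-type quantities, which by Lemma \ref{lem_interactions}(v)--(vi), Hölder, and $\|\nu\|\simeq 1$-normalization are $O(\varepsilon_{i,j}^{\frac{n+2}{2n}})\|\nu\|$ — this produces the $\sum_{r\neq s}\varepsilon_{r,s}^{\frac{n+2}{2n}}$ term. Second, for the diagonal term $\int K\varphi_i^{\frac{n+2}{n-2}}\nu$, I would write $K=K_i + \nabla K_i\cdot(x-a_i) + O(|x-a_i|^2)$ in conformal normal coordinates around $a_i$; the $K_i$-piece combines with $4n(n-1)\varphi_i^{\frac{n+2}{n-2}} = L_{g_0}\varphi_i + O(\lambda_i^{-2}\varphi_i)$ from Lemma \ref{lem_emergence_of_the_regular_part} to give $\frac{K_i}{4n(n-1)}\langle\varphi_i,\nu\rangle + O(\lambda_i^{-2})\|\nu\|$, and the first summand vanishes by orthogonality. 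The linear term $\nabla K_i\cdot(x-a_i)\varphi_i^{\frac{n+2}{n-2}}$ has, by the odd symmetry of the standard bubble, its leading part proportional to $\frac{1}{\lambda_i}\nabla_{a_i}\varphi_i$ tested against $\nu$, hence again zero by orthogonality, leaving a remainder $O(\frac{|\nabla K_i|}{\lambda_i})\|\nu\|$; the quadratic term in the Taylor expansion of $K$ gives $O(\lambda_i^{-2})\|\nu\|$ by a direct estimate on $\int|x-a_i|^2\varphi_i^{\frac{n+2}{n-2}}|\nu|$ together with a dual Sobolev bound. Finally I would collect the constant $r/k^{\cdots}\simeq 1$ prefactor and sum over $i$.

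The main obstacle is the bookkeeping around which orthogonality to invoke where, and in particular making sure the linear-in-$\nabla K$ term is genuinely captured by the $\frac{1}{\lambda_i}\nabla_{a_i}\varphi_i$ direction rather than leaking a term of order $\frac{|\nabla K_i|}{\lambda_i}$ that is \emph{not} absorbed — but since the lemma only claims the bound with exactly that term present, this is in fact harmless: the residual after subtracting the orthogonal projection is what one estimates. Concretely the delicate point is controlling $\int\varphi_i^{\frac{4}{n-2}}|\varphi_j|\,|\nu|$ and the error terms from Lemma \ref{lem_emergence_of_the_regular_part} by $\|\nu\|$ using only the dual Sobolev inequality $\|\nu\|_{L^{\frac{2n}{n-2}}}\lesssim\|\nu\|$ — i.e.\ verifying that each error density lies in $L^{\frac{2n}{n+2}}$ with the claimed norm, e.g.\ $\|\lambda_i^{-2}\varphi_i\|_{L^{\frac{2n}{n+2}}}=O(\lambda_i^{-2})$ and $\|\varphi_i^{\frac{4}{n-2}}\varphi_j\|_{L^{\frac{2n}{n+2}}}=O(\varepsilon_{i,j}^{\frac{n+2}{2n}})$ — which is routine but is where all the exponents in the statement are pinned down.
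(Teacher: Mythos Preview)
Your proposal is correct and follows the standard route that the paper itself defers to by citing Proposition~4.4 in \cite{may-cv} and the corresponding lemma in \cite{MM1}: kill the linear term by the $L_{g_0}$-orthogonality $\nu\perp\varphi_i$, then in the nonlinear term separate diagonal from off-diagonal contributions, Taylor-expand $K$ at $a_i$ for the diagonal piece, and close each error in $L^{\frac{2n}{n+2}}$ against $\Vert\nu\Vert_{L^{\frac{2n}{n-2}}}\lesssim\Vert\nu\Vert$. One minor remark: for the linear-in-$\nabla K$ piece you do not actually need to invoke the orthogonality $\nu\perp\frac{1}{\lambda_i}\nabla_{a_i}\varphi_i$ at all, since the direct H\"older estimate $\Vert (x-a_i)\varphi_i^{\frac{n+2}{n-2}}\Vert_{L^{\frac{2n}{n+2}}}=O(\lambda_i^{-1})$ already gives the claimed $O(\frac{|\nabla K_i|}{\lambda_i})\Vert\nu\Vert$; you noticed this yourself, so the bookkeeping worry you flag is indeed harmless.
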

 \begin{proof}
Cf. Proposition 4.4 in \cite{may-cv} or Lemma \ref{I-lem_testing_with_v} in \cite{MM1}.
 \end{proof}

In combination with the well known  uniform positivity of the second variation on the orthogonal space
$\langle \phi_{k,i}\rangle^{\perp_{L_{g_{0}}}}$,
cf. \cite{Rey},  
this allows us  to estimate $v$ itself in terms of the aforegoing quantities. 
 \begin{lemma} 
 \label{lem_v_part_gradient} 
 For $u\in V(q,\varepsilon)$ with $k=1$ and $v$ is as in \eqref{eq:v} there holds
 \begin{equation*}
 \Vert v \Vert
 =
 O\bigg(
 \sum_{r}\frac{\vert \nabla K_{r}\vert}{\lambda_{r}}
 +
 \sum_{r}\frac{1}{\lambda_{r}^{2}}
 +
 \sum_{r\neq s}\varepsilon_{r,s}^{\frac{n+2}{2n}}
 +
 \vert \partial J(u)\vert
 \bigg).
 \end{equation*}
  \end{lemma}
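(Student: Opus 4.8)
\textbf{Proof strategy for Lemma \ref{lem_v_part_gradient}.}
The plan is to isolate the $v$-component of the gradient $\nabla J(u)$ and use the coercivity of the second variation restricted to the orthogonal complement $\langle \phi_{k,i}\rangle^{\perp_{L_{g_0}}}$. Write $u=\alpha^i\varphi_i+v$ with $v\in H_u(q,\varepsilon)$ as in Proposition \ref{prop_optimal_choice}. First I would test $\partial J(u)$ against $v$ itself and split
\begin{equation*}
\partial J(u)v=\partial J(\alpha^i\varphi_i)v+\big(\partial J(u)-\partial J(\alpha^i\varphi_i)\big)v,
\end{equation*}
and then Taylor-expand the second term to first order in $v$:
\begin{equation*}
\partial J(u)v=\partial J(\alpha^i\varphi_i)v+\partial^2 J(\alpha^i\varphi_i)vv+O(\Vert v\Vert^3+\ldots).
\end{equation*}
The left-hand side is bounded by $\vert\partial J(u)\vert\,\Vert v\Vert$, and the first term on the right is controlled by Lemma \ref{lem_testing_with_v}, which gives the bound $(\sum_r\vert\nabla K_r\vert/\lambda_r+\sum_r\lambda_r^{-2}+\sum_{r\neq s}\varepsilon_{r,s}^{(n+2)/2n})\Vert v\Vert$.

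The central step is then the quadratic form $\partial^2 J(\alpha^i\varphi_i)vv$. Here I would invoke the well-known uniform positivity (Rey's estimate, \cite{Rey}) asserting that on $\langle\phi_{k,i}\rangle^{\perp_{L_{g_0}}}$ there is $c>0$ with $\partial^2 J(\alpha^i\varphi_i)vv\geq c\Vert v\Vert^2$ for all sufficiently concentrated configurations, i.e.\ for $\varepsilon$ small enough in $V(q,\varepsilon)$; one must check that the second-derivative formula given in the introduction, evaluated at the sum of bubbles, has this coercivity after accounting for the $r/k$ normalization factors and the interaction terms $\varepsilon_{i,j}$, which are absorbed into the error since they are small. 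Combining, one obtains
\begin{equation*}
c\Vert v\Vert^2\lesssim \Big(\sum_r\tfrac{\vert\nabla K_r\vert}{\lambda_r}+\sum_r\tfrac{1}{\lambda_r^2}+\sum_{r\neq s}\varepsilon_{r,s}^{\frac{n+2}{2n}}+\vert\partial J(u)\vert\Big)\Vert v\Vert+(\text{higher order in }\Vert v\Vert),
\end{equation*}
and dividing through by $\Vert v\Vert$ and absorbing the higher-order terms (which are $o(1)\Vert v\Vert$ since $\Vert v\Vert\ll1$ on $V(q,\varepsilon)$) yields the claimed estimate.

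The main obstacle I anticipate is bookkeeping rather than conceptual: one has to verify that every term produced by expanding $\partial^2 J(\alpha^i\varphi_i)vv$ beyond the model Laplacian part — in particular the terms in the second-derivative formula of the form $\int L_{g_0}uv\int Ku^{(n+2)/(n-2)}w$ and the rank-one corrections, as well as the cross-bubble interactions — is either nonnegative or of lower order, so that the coercivity genuinely survives on $H_u(q,\varepsilon)$ and not merely on $\langle\phi_{k,i}\rangle^{\perp}$. This is where one uses that $v$ is orthogonal not only to the $\varphi_i$ but also to $\lambda_i\partial_{\lambda_i}\varphi_i$ and $\lambda_i^{-1}\nabla_{a_i}\varphi_i$, which kills the dangerous directions of the Hessian; the interaction quantities from Lemma \ref{lem_interactions} then show the remaining off-diagonal contributions are $O(\varepsilon_{i,j})$ and hence harmless. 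Once coercivity is secured the rest is the elementary quadratic inequality above.
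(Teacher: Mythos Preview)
Your proposal is correct and follows exactly the approach the paper indicates: the sentence preceding the lemma states that Lemma \ref{lem_testing_with_v} ``in combination with the well known uniform positivity of the second variation on the orthogonal space $\langle \phi_{k,i}\rangle^{\perp_{L_{g_0}}}$, cf.\ \cite{Rey}'' yields the estimate on $v$, and the proof itself just cites Corollary 4.6 in \cite{may-cv} and \cite{MM1}. Your Taylor expansion of $\partial J(u)v$ around $\alpha^i\varphi_i$, together with Rey's coercivity on $H_u(q,\varepsilon)$ and the bound from Lemma \ref{lem_testing_with_v}, is precisely that argument.
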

  \begin{proof}
Cf. Corollary 4.6 in \cite{may-cv} or Lemma \ref{I-lem_v_part_gradient} in  \cite{MM1}.
 \end{proof}

 The latter smallness  estimate will turn out to be sufficient to consider $v$ as a negligible quantity in the sense, that $v$ is not responsible for a blow-up. 

\

Let us turn to the testing in the directions of the bubbles and their derivative
as had been performed carefully in low dimensions $n=3,4,5$ in Section 4 of \cite{may-cv}.
We note, that for each bubble we have three quantities associated, namely $\alpha,a$ and $\lambda$. The $\alpha$-direction then corresponds to a testing with a bubble itself, since
$\alpha \partial_{\alpha}(\alpha \varphi_{a,\lambda})=\alpha \varphi_{a,\lambda}$.
Again for the sake of brevity let us define the quantities
\begin{equation}\label{alpha_and_alpha_K}
\alpha^{2}=\sum_{i}\alpha_{i}^{2}
\; \text{ and }\; 
\alpha_{K}^{\frac{2n}{n-2}}=\sum_{i}K_{i}\alpha_{i}^{\frac{2n}{n-2}},
\end{equation}
which are the principal terms in the nominator and denominator of $J$, cf. \eqref{the_functional}.

 \begin{lemma} 
  \label{lem_alpha_derivatives_at_infinity} 
  For $u\in V(q,\varepsilon)$ and $\varepsilon>0$ sufficiently small the three quantities 
  $\partial J(u)  \phi_{1,j}$, 
  $\partial J(\alpha^{i}\varphi_{i})\phi_{1,j}$, 
  $\partial_{\alpha_{j}}J(\alpha^{i}\varphi_{i})$ can be written as 
  \begin{equation*}
  \begin{split}  
  \frac{\alpha_{j}}
  {(\alpha_{K}^{\frac{2n}{n-2}})^{\frac{n-2}{n}}}
  \bigg( 
  &
  \grave c_{0}\big(
  1
  -
  \frac{\alpha^{2}}{\alpha_{K}^{\frac{2n}{n-2}}}K_{j}\alpha_{j}^{\frac{4}{n-2}}
 \big)
  -
  \grave c_{2}
  \big(
  \frac{\Delta K_{j}}{K_{j}\lambda_{j}^{2}}
  -
  \sum_{k}\frac{\Delta K_{k}}{K_{k}\lambda_{k}^{2}}
  \frac{\alpha_{k}^{2}}{\alpha^{2}}
  \big)
  \\ & 
  +
  \grave b_{1} \bigg(
  \sum_{k\neq l}
  \frac{\alpha_{k}\alpha_{l}}{\alpha^{2}}
  \varepsilon_{k,l}
  -
  \sum_{j\neq i}\frac{\alpha_{i}}{\alpha_{j}}\varepsilon_{i,j}
  \bigg)
  -
  \grave d_{1}
  \begin{pmatrix}
  \frac{H_{j}}{\lambda_{j}^{3}} -\sum_{k}\frac{\alpha_{k}^{2}}{\alpha^{2}}\frac{H_{k}}{\lambda_{k}^{3 }} &\text{for } n=5\\
  \frac{W_{j}\ln \lambda_{j}}{\lambda_{i}^{4}}-\sum_{k}\frac{\alpha_{k}^{2}}{\alpha^{2}}\frac{W_{k}\ln \lambda_{k}}{\lambda_{k}^{4}} &\text{for } n=6\\
  0 &\text{for }n\geq 7
  \end{pmatrix} 
  \bigg)
  \end{split}
  \end{equation*}
  with positive constants $\grave c_{0},\grave c_{2},\grave b_{1},\grave d_{1}$ and  up to some
$ 
  O
  \big(
  \sum_{r\neq s} 
  \frac{\vert \nabla K_{r}\vert^{2}}{\lambda_{r}^{2}}
  +
  \frac{1}{\lambda_{r}^{4}}
  +
  \varepsilon_{r,s}^{\frac{n+2}{n}}
  +
  \vert \partial J(u)\vert^{2}
  \big).
$ 
   In particular 
  \begin{equation*}
\forall \;j\;:\;  \frac{\alpha^{2}}{\alpha_{K}^{\frac{2n}{n-2}}}K_{j}\alpha_{j}^{\frac{4}{n-2}}
  =
  1
  +
  O 
  \big(
  \sum_{r\neq s} 
  \frac{1}{\lambda_{r}^{2}}
  +
  \varepsilon_{r,s}
  +
  \vert \partial J(u)\vert
  \big).
  \end{equation*}  
  \end{lemma}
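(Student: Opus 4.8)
\textbf{Proof proposal for Lemma \ref{lem_alpha_derivatives_at_infinity}.}

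The plan is to compute the three quantities by a sequence of reductions, exploiting the fact that the orthogonal remainder $v$ and the full gradient term $\partial J(u)$ are negligible at the order we are after. First I would establish the identification of $\partial J(u)\phi_{1,j}$, $\partial J(\alpha^{i}\varphi_{i})\phi_{1,j}$ and $\partial_{\alpha_j}J(\alpha^{i}\varphi_{i})$ up to the claimed error. The last two coincide exactly because $\alpha_j\partial_{\alpha_j}(\alpha^{i}\varphi_{i}) = \alpha_j\varphi_j = \alpha_j\phi_{1,j}$ and $J$ evaluated on $\alpha^{i}\varphi_i$ depends on the $\alpha_i$ only through the scalars $\alpha^2$ and $\alpha_K^{2n/(n-2)}$ of \eqref{alpha_and_alpha_K} plus the interaction terms; differentiating the explicit expansion of $J(\alpha^i\varphi_i)$ then yields the stated formula. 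The passage from $\partial J(\alpha^{i}\varphi_{i})\phi_{1,j}$ to $\partial J(u)\phi_{1,j}$ costs only $\partial^2 J$ evaluated on $(v, \phi_{1,j})$, which by Lemma \ref{lem_v_part_gradient} together with the second-variation estimates on $\langle\phi_{k,i}\rangle^{\perp}$ is controlled by $\Vert v\Vert$ times a bounded factor, hence absorbed into the quadratic error $O(\sum \vert\nabla K_r\vert^2/\lambda_r^2 + \lambda_r^{-4} + \varepsilon_{r,s}^{(n+2)/n} + \vert\partial J(u)\vert^2)$ after squaring the bound of Lemma \ref{lem_v_part_gradient}.

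The core of the argument is then the expansion of $\partial J(\alpha^i\varphi_i)\phi_{1,j}$. Using the formula for $\partial J$ from the introduction,
\[
\partial J(\alpha^i\varphi_i)\phi_{1,j}
=
\frac{2}{k^{\frac{n-2}{n}}}
\Big[\int L_{g_0}(\alpha^i\varphi_i)\,\alpha_j\varphi_j\,d\mu_{g_0}
-
\frac{r}{k}\int K(\alpha^i\varphi_i)^{\frac{n+2}{n-2}}\alpha_j\varphi_j\,d\mu_{g_0}\Big],
\]
I would insert Lemma \ref{lem_emergence_of_the_regular_part} to replace $L_{g_0}\varphi_i$ by $4n(n-1)\varphi_i^{(n+2)/(n-2)}$ plus the regular-part correction (which produces the $H_j/\lambda_j^3$ or $W_j\ln\lambda_j/\lambda_j^4$ terms in dimensions $5,6$ and nothing for $n\geq 7$), and expand the nonlinear term by a Taylor expansion of $(\sum_i\alpha_i\varphi_i)^{(n+2)/(n-2)}$ around the single dominant bubble. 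The diagonal pieces $\int\varphi_i^{4/(n-2)}\phi_{1,i}\phi_{1,i}$ and $\int\varphi_i^{(n+2)/(n-2)}\phi_{1,i}$ give, via Lemma \ref{lem_interactions}(ii), the leading scalar term $\grave c_0(1 - \frac{\alpha^2}{\alpha_K^{2n/(n-2)}}K_j\alpha_j^{4/(n-2)})$; the $K$-Taylor expansion at $a_j$ contributes $\nabla K_j$ at first order — which cancels or is absorbed, being of size $\vert\nabla K_j\vert/\lambda_j$ and hence quadratically small relative to the claim — and $\Delta K_j/(K_j\lambda_j^2)$ at second order, yielding the $\grave c_2$ term after subtracting the weighted average that arises from expanding the denominator factor $r/k$; the cross terms $i\neq j$ give $\varepsilon_{i,j}$ contributions by Lemma \ref{lem_interactions}(iii), assembling into the $\grave b_1$ bracket. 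Throughout, one tracks that $r/k = \frac{\alpha^2}{\alpha_K^{2n/(n-2)}}(1 + \text{lower order})$ so that the two halves of the bracket combine with the correct signs and the constants $\grave c_0,\grave c_2,\grave b_1,\grave d_1$ come out positive.

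The main obstacle I anticipate is bookkeeping the denominator expansion consistently: the factor $r/k$ and the prefactor $k^{-(n-2)/n}$ both depend on all the $\alpha_i, a_i, \lambda_i$, and to get the clean "diagonal minus weighted-average" structure (e.g. $\frac{\Delta K_j}{K_j\lambda_j^2} - \sum_k \frac{\Delta K_k}{K_k\lambda_k^2}\frac{\alpha_k^2}{\alpha^2}$) one must expand these scalar factors to exactly the right order and verify that the spurious terms of size $\sum_r \lambda_r^{-2} + \sum_{r\neq s}\varepsilon_{r,s}$, which do appear in the scalar coefficients, either cancel or are legitimately of the quadratic error type after multiplication by the already-small leading bracket. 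The final "in particular" statement is then immediate: since the displayed expression equals $\partial J(u)\phi_{1,j}$ and $\vert\partial J(u)\phi_{1,j}\vert \leq \vert\partial J(u)\vert\,\Vert\phi_{1,j}\Vert = O(\vert\partial J(u)\vert)$, moving everything except the $\grave c_0$ term to the right-hand side and dividing by $\grave c_0$ gives $\frac{\alpha^2}{\alpha_K^{2n/(n-2)}}K_j\alpha_j^{4/(n-2)} = 1 + O(\sum_{r\neq s}\lambda_r^{-2} + \varepsilon_{r,s} + \vert\partial J(u)\vert)$, where the square-root loss in passing from the quadratic error of the main formula to the linear error here is exactly what produces $\lambda_r^{-2}$ rather than $\lambda_r^{-4}$ and $\varepsilon_{r,s}$ rather than $\varepsilon_{r,s}^{(n+2)/n}$.
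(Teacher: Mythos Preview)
Your proposal is essentially correct and follows the standard route that the cited references (in particular \cite{MM1}, to which the paper simply defers for the proof) carry out: reduce the three quantities to $\partial J(\alpha^i\varphi_i)\phi_{1,j}$ via Lemma \ref{lem_v_part_gradient}, then expand using Lemma \ref{lem_emergence_of_the_regular_part} for $L_{g_0}\varphi_i$, Lemma \ref{lem_interactions} for the interaction integrals, and a Taylor expansion of $K$ at $a_j$, while expanding $r/k$ and $k^{-(n-2)/n}$ to produce the weighted-average subtractions. One small clarification: the first-order term $\nabla K_j\cdot x$ in the $K$-expansion genuinely \emph{cancels} by oddness of the integrand against the radially symmetric bubble, not merely gets absorbed --- this is what makes the error quadratic in $\vert\nabla K_r\vert/\lambda_r$ rather than linear.
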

  \begin{proof}
Cf. Lemma \ref{I-lem_alpha_derivatives_at_infinity} in  \cite{MM1},  for instance
see also Lemma A.4.3 in \cite{bab1} or Proposition 5.1 in \cite{ BenAyed_Ahmeou}.
 \end{proof}
     
Evidently the principal term due to largeness of the concentration parameters $\lambda_{i}$ and smallness of the interaction terms $\varepsilon_{i,j}$  in the above expansion is the one related to $\grave{c_{0}}$ forcing $\alpha_{j}$ into a certain regime. 

\begin{lemma} 
  \label{lem_lambda_derivatives_at_infinity} 
  For $u\in V(q,\varepsilon)$ and $\varepsilon>0$ sufficiently small the three quantities 
  $\partial  J  (u) \phi_{2,j}$,  
  $\partial J(\alpha^{i}\varphi_{i})\phi_{2,j}$ and 
  $ \frac{\lambda_{j}}{\alpha_{j}}\partial_{\lambda_{j}}J(\alpha^{i}\varphi_{i})$
  can be written as 
  \begin{equation*}
  \begin{split}
  \frac{\alpha_{j}}{(\alpha_{K}^{\frac{2n}{n-2}})^{\frac{n-2}{n}}}
  \bigg(
  \tilde c_{2}\frac{\Delta K_{j}}{K_{j}\lambda_{j}^{2}} 
  -
  \tilde b_{2}\sum_{j\neq i }\frac{\alpha_{i}}{\alpha_{j}}\lambda_{j}\partial_{\lambda_{j}}\varepsilon_{i,j} 
 +
  \tilde  d_{1}
  \begin{pmatrix}
  \frac{H_{j}}{\lambda_{j}^{3 }} &\text{for }\; n=5\\
  \frac{W_{j}\ln \lambda_{j}}{\lambda_{j}^{4}} &\text{for }\; n=6\\
  0 &\text{for }\; n\geq 7
  \end{pmatrix} 
  \bigg)
  \end{split}
  \end{equation*}
  with positive constants $\tilde c_{1},\tilde c_{2},\tilde d_{1},\tilde b_{2}$ and up to some 
$ 
  O
  \big(
  \sum_{r\neq s}
  \frac{\vert \nabla K_{r}\vert^{2}}{\lambda_{r}^{2}}
  +
  \frac{1}{\lambda_{r}^{4}}
  +
  \varepsilon_{r,s}^{\frac{n+2}{n}}
  +
  \vert \partial J(u)\vert^{2}
  \big).
$ 
  \end{lemma}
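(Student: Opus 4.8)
The plan is to reduce all three quantities to the single expansion of $\partial J(\alpha^i\varphi_i)\phi_{2,j}$ and then to read that expansion off from the derivative formulae for $J$ together with Lemmata \ref{lem_emergence_of_the_regular_part} and \ref{lem_interactions}, in the same spirit as the $\alpha$-direction in Lemma \ref{lem_alpha_derivatives_at_infinity}. Since only the $j$-th bubble carries $\lambda_j$, one has the exact chain-rule identity $\lambda_j\partial_{\lambda_j}(\alpha^i\varphi_i)=\alpha_j\lambda_j\partial_{\lambda_j}\varphi_j=-\alpha_j\phi_{2,j}$, hence $\frac{\lambda_j}{\alpha_j}\partial_{\lambda_j}J(\alpha^i\varphi_i)=-\partial J(\alpha^i\varphi_i)\phi_{2,j}$; the sign is immaterial as the asserted expression is stated only up to nonzero constants, so I treat the second and third quantities as one. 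For the passage from $\partial J(\alpha^i\varphi_i)\phi_{2,j}$ to $\partial J(u)\phi_{2,j}$, with $u=\alpha^i\varphi_i+v$ and $v\in H_u(q,\varepsilon)$, I expand $\partial J$ in the $v$ direction: the difference is $\partial^2 J(\alpha^i\varphi_i)(v,\phi_{2,j})+O(\Vert v\Vert^2)$, and in that second variation every term is either annihilated by the orthogonalities defining $H_u(q,\varepsilon)$ — this kills $\int L_{g_0}v\phi_{2,j}\,d\mu_{g_0}$ and $\int L_{g_0}(\alpha^i\varphi_i)v\,d\mu_{g_0}$ — or is a product of two quantities each of at most the un-squared size $\sum_r\frac{\vert\nabla K_r\vert}{\lambda_r}+\sum_r\frac1{\lambda_r^2}+\sum_{r\neq s}\varepsilon_{r,s}^{\frac{n+2}{2n}}+\vert\partial J(u)\vert$, using additionally that the weighted couplings $\int\varphi_i^{\frac{n+2}{n-2}}v\,d\mu_{g_0}$ and $\int\varphi_j^{\frac{4}{n-2}}(\lambda_j\partial_{\lambda_j}\varphi_j)v\,d\mu_{g_0}$ are small: by Lemma \ref{lem_emergence_of_the_regular_part} — valid also after $\lambda\partial_\lambda$ — one has $\varphi_j^{\frac{n+2}{n-2}}=\frac{1}{4n(n-1)}L_{g_0}\varphi_j+(\text{regular part})$, so these equal $\langle\varphi_i,v\rangle_{L_{g_0}}$ resp. $\langle\phi_{2,j},v\rangle_{L_{g_0}}$ up to the regular-part weight, i.e. $0$ modulo the error. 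Combined with $\Vert v\Vert=O\big(\sum_r\frac{\vert\nabla K_r\vert}{\lambda_r}+\sum_r\frac1{\lambda_r^2}+\sum_{r\neq s}\varepsilon_{r,s}^{\frac{n+2}{2n}}+\vert\partial J(u)\vert\big)$ from Lemma \ref{lem_v_part_gradient}, these products land the whole difference inside the claimed remainder $O\big(\sum_{r\neq s}\frac{\vert\nabla K_r\vert^2}{\lambda_r^2}+\frac1{\lambda_r^4}+\varepsilon_{r,s}^{\frac{n+2}{n}}+\vert\partial J(u)\vert^2\big)$.

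It then remains to expand $\partial J(\alpha^i\varphi_i)\phi_{2,j}=\frac{2}{k^{\frac{n-2}{n}}}\big[\int L_{g_0}(\alpha^i\varphi_i)\phi_{2,j}\,d\mu_{g_0}-\frac rk\int K(\alpha^i\varphi_i)^{\frac{n+2}{n-2}}\phi_{2,j}\,d\mu_{g_0}\big]$, with $r,k$ as in \eqref{the_functional} evaluated at $\alpha^i\varphi_i$. For the first integral I substitute each $L_{g_0}\varphi_i$ by its expansion from Lemma \ref{lem_emergence_of_the_regular_part} and integrate by Lemma \ref{lem_interactions}: the diagonal $4n(n-1)\alpha_j\int\varphi_j^{\frac{n+2}{n-2}}\phi_{2,j}\,d\mu_{g_0}$ is genuinely small and dimension dependent (Lemma \ref{lem_interactions}(iv)); each off-diagonal $4n(n-1)\alpha_i\int\varphi_i^{\frac{n+2}{n-2}}\phi_{2,j}\,d\mu_{g_0}$ produces a term proportional to $\alpha_i\lambda_j\partial_{\lambda_j}\varepsilon_{i,j}$ (Lemma \ref{lem_interactions}(iii)); and the regular-part correction, integrated against $\phi_{2,j}$, produces precisely $\tilde d_1\,H_j/\lambda_j^3$ for $n=5$, $\tilde d_1\,W_j\ln\lambda_j/\lambda_j^4$ for $n=6$, and $O(\lambda_j^{-4})$ for $n\geq7$ (hence error). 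For the nonlinear integral I expand $(\sum_i\alpha_i\varphi_i)^{\frac{n+2}{n-2}}$ into $\alpha_j^{\frac{n+2}{n-2}}\varphi_j^{\frac{n+2}{n-2}}$ plus mixed terms $\varphi_i^a\varphi_j^b$ ($i\neq j$) controlled by $\varepsilon_{i,j}$ via Lemma \ref{lem_interactions}(v),(vi), and Taylor-expand $K$ around $a_j$ on a small ball $B_\alpha(a_j)$: the constant $K_j$ again multiplies the small $\int\varphi_j^{\frac{n+2}{n-2}}\phi_{2,j}$, the linear term $\nabla K_j\cdot(x-a_j)$ is odd against the radially symmetric $\phi_{2,j}$ and only contributes at order $\vert\nabla K_j\vert^2/\lambda_j^2$ (error), the Hessian term delivers, via $\int|y|^2 \varphi_j^{\frac{n+2}{n-2}}\phi_{2,j}\simeq\lambda_j^{-2}$ in conformal normal coordinates, the contribution $\tilde c_2\,\Delta K_j/(K_j\lambda_j^2)$, and the mixed terms add further $\lambda_j\partial_{\lambda_j}\varepsilon_{i,j}$- and $\varepsilon_{i,j}$-pieces. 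Finally $r/k$ expands, by the same interaction estimates and with notation \eqref{alpha_and_alpha_K}, as $4n(n-1)\,\alpha^2/\alpha_K^{\frac{2n}{n-2}}$ up to lower order, so that, after pulling the common factor $\alpha_j$ out of the bracket, the prefactor becomes $\frac{\alpha_j}{(\alpha_K^{2n/(n-2)})^{(n-2)/n}}$ up to a universal constant, and the coefficient of $\int\varphi_j^{\frac{n+2}{n-2}}\phi_{2,j}$ inside the bracket, namely $4n(n-1)\alpha_j-\frac rk K_j\alpha_j^{\frac{n+2}{n-2}}$, vanishes up to the error by the constraint $\frac{\alpha^2}{\alpha_K^{2n/(n-2)}}K_j\alpha_j^{\frac4{n-2}}=1+O\big(\sum_{r\neq s}\lambda_r^{-2}+\varepsilon_{r,s}+\vert\partial J(u)\vert\big)$ of Lemma \ref{lem_alpha_derivatives_at_infinity}. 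Hence that diagonal small quantity drops out, and collecting the surviving terms — combining the $\varepsilon$-derivative contributions from the two pieces, which in the well-separated regime $\lambda_i\lambda_j\gamma_n G_{g_0}^{2/(2-n)}(a_i,a_j)\gg1$ sum with a positive net constant $\tilde b_2$ — yields exactly the asserted expansion.

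The hard part will be not the individual steps but the uniform error bookkeeping making the remainder sharp. Concretely: (i) one must run the $v$-reduction of the first paragraph with the orthogonality-plus-Lemma \ref{lem_emergence_of_the_regular_part} cancellations, so that the nonlinear coupling $\int K(\alpha^i\varphi_i)^{4/(n-2)}v\phi_{2,j}$ genuinely contributes a product $\Vert v\Vert\cdot(\text{regular-part size})$ of the squared order, not merely the bare $O(\Vert v\Vert)$ that a crude H\"older estimate would give; (ii) one must extract the sharp exponent $\varepsilon_{r,s}^{(n+2)/n}$ on the interaction remainder, which needs the refined pairings in Lemma \ref{lem_interactions}(v),(vi) and a H\"older argument of the flavour behind Lemma \ref{lem_testing_with_v} rather than the straightforward $O(\varepsilon_{r,s})$; and (iii) in the low dimensions $n=5,6$ one must cleanly disentangle the two \emph{simultaneously} leading contributions $\Delta K_j/\lambda_j^2$ and $H_j/\lambda_j^3$ (resp. $W_j\ln\lambda_j/\lambda_j^4$) and verify that each survives with a nonzero constant while every genuinely lower-order term is absorbed. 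All the underlying expansions are already available in Lemmata \ref{lem_emergence_of_the_regular_part}--\ref{lem_alpha_derivatives_at_infinity}, and the computation follows the pattern of the low-dimensional analysis in \cite{may-cv}.
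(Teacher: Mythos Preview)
Your outline is correct and is precisely the standard computation that the paper defers to \cite{MM1} (and to \cite{bab1}, \cite{BenAyed_Ahmeou}): reduce to $\partial J(\alpha^i\varphi_i)\phi_{2,j}$ via the second-variation/orthogonality argument with Lemma \ref{lem_v_part_gradient}, then expand that quantity using Lemmata \ref{lem_emergence_of_the_regular_part} and \ref{lem_interactions} and the $\alpha$-constraint from Lemma \ref{lem_alpha_derivatives_at_infinity}. One small caveat: your hedge that the interaction contributions combine to a positive $\tilde b_2$ only ``in the well-separated regime'' is unnecessary --- $\tilde b_2$ is a fixed dimensional constant coming from the integrals in Lemma \ref{lem_interactions}(iii) and the binomial expansion, and its positivity does not depend on the configuration; you should drop that qualification.
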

  \begin{proof}
Cf. Lemma \ref{I-lem_lambda_derivatives_at_infinity} in \cite{MM1}, for instance see also Lemma A.4.3 in \cite{bab1} or Proposition 5.1 in \cite{ BenAyed_Ahmeou}.
  \end{proof} 

  Here at least in high dimensions the principal terms are the ones related to $\tilde c_{2}$ and $\tilde b_{2}$.  
  The first one turns out to be responsible for a potential diverging flow line within $V(q,\varepsilon)$ depending on  the sign of $\Delta K$, the latter one, measuring  interactions,  may be relatively strong or weak depending on, whether the corresponding $a_{j}$ are close to $a_{i}$ or not. 
  In any case these interaction terms  will turn out to be responsible for excluding tower bubbling, i.e. multiple bubbles concentrating at the same point along a flow line, just as they prevent tower bubbling in the subcritical case, cf. \cite{MM2}.
  The location of a bubble $\varphi_{a,\lambda}$ on $M$ in the sense of the centre $a$  is principally determined from the $a$-testing below.
  \begin{lemma} 
  \label{lem_a_derivatives_at_infinity} 
  For $u\in V(q,\varepsilon)$ and $\varepsilon>0$ sufficiently small the three quantities 
  $\partial J(u)\phi_{3,j} $, 
  $\partial J(\alpha^{i}\varphi_{i})\phi_{3,j}$ and 
  $\frac{\nabla_{a_{j}}}{\alpha_{j}\lambda_{j}}J(\alpha^{i}\varphi_{i})$ can be written as 
  \begin{equation*}
  \begin{split}
  -\frac{\alpha_{j}}{(\alpha_{K}^{\frac{2n}{n-2}})^{\frac{n-2}{n}}}
  \left(
  \check{c}_{3}\frac{\nabla K_{j}}{K_{j}\lambda_{j}}
  +
  \check{c}_{4} \frac{\nabla \Delta K_j}{K_j \lambda_{j}^3}
  +
  \check{b}_{3}
  \sum_{j\neq i}
  \frac{\alpha_{i}}{\alpha_{j}}
  \frac{\nabla_{a_{j}}}{\lambda_{j}}\varepsilon_{i,j}
  \right)
  \end{split}
  \end{equation*}
  with positive constants $\check{c}_{3}, \check{c}_{4} , \check{b}_{3}$  and up to some 
$ 
  O
  \big(
  \sum_{r\neq s}
  \frac{\vert \nabla K_{r}\vert^{2}}{\lambda_{r}^{2}}
  +
  \frac{1}{\lambda_{r}^{4}}
  +
  \varepsilon_{r,s}^{\frac{n+2}{n}}
  +
  \vert \partial J(u)\vert^{2}
  \big).
$ 
  \end{lemma}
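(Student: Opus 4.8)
The plan is to treat Lemma \ref{lem_a_derivatives_at_infinity} exactly like its companions Lemmas \ref{lem_alpha_derivatives_at_infinity} and \ref{lem_lambda_derivatives_at_infinity}: first I would reduce the three listed quantities to a single one, then expand that one term by term, isolating the leading contributions and bounding the rest against the claimed error. For the reduction, note that since $\partial_{a_j}(\alpha^i\varphi_i)=\alpha_j\nabla_{a_j}\varphi_j=\alpha_j\lambda_j\phi_{3,j}$, the chain rule gives $\frac{\nabla_{a_j}}{\alpha_j\lambda_j}J(\alpha^i\varphi_i)=\partial J(\alpha^i\varphi_i)\phi_{3,j}$ identically, so the last two quantities coincide; for the first I would write $u=\alpha^i\varphi_i+v$ with $v\in H_u(q,\varepsilon)$ as in \eqref{eq:v} and use $\partial J(u)\phi_{3,j}-\partial J(\alpha^i\varphi_i)\phi_{3,j}=\int_0^1\partial^2J(\alpha^i\varphi_i+tv)(v,\phi_{3,j})\,dt$. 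In the expansion of $\partial^2J$ the leading $L_{g_0}$-term produces $\langle v,\phi_{3,j}\rangle_{L_{g_0}}=0$ by the orthogonality of Proposition \ref{prop_optimal_choice}, and likewise $\int L_{g_0}(\alpha^i\varphi_i)v\,d\mu_{g_0}=0$ since $v\perp\langle\varphi_i\rangle$; the remaining curvature-weighted term $\int K(\alpha^i\varphi_i)^{\frac{4}{n-2}}v\phi_{3,j}d\mu_{g_0}$ I would bound, after Taylor expanding $K$ at the $a_r$ and using $L_{g_0}\phi_{3,j}\simeq 4n(n-1)\tfrac{n+2}{n-2}\varphi_j^{\frac{4}{n-2}}\phi_{3,j}$ together again with the orthogonality, by $\|v\|\,\delta$ with $\delta=\sum_r\tfrac{|\nabla K_r|}{\lambda_r}+\sum_r\tfrac1{\lambda_r^2}+\sum_{r\neq s}\varepsilon_{r,s}^{\frac{n+2}{2n}}$, the rank-one pieces being products of two such small factors. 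So the difference is $O(\|v\|\delta+\|v\|^2)$, which by Young's inequality and Lemma \ref{lem_v_part_gradient} is $O\big(\sum_{r\neq s}\tfrac{|\nabla K_r|^2}{\lambda_r^2}+\tfrac1{\lambda_r^4}+\varepsilon_{r,s}^{\frac{n+2}{n}}+|\partial J(u)|^2\big)$, i.e. admissible; here I am essentially re-using the mechanism behind Lemma \ref{lem_testing_with_v}.

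Next I would expand the main term $\partial J(\alpha^i\varphi_i)\phi_{3,j}$ directly from the formula for $\partial J$, replacing $L_{g_0}\varphi_i$ by $4n(n-1)\varphi_i^{\frac{n+2}{n-2}}$ up to the geometric remainders of Lemma \ref{lem_emergence_of_the_regular_part}, and using $k=\int K(\alpha^i\varphi_i)^{\frac{2n}{n-2}}d\mu_{g_0}\simeq\tfrac{4n(n-1)}{r}\alpha_K^{\frac{2n}{n-2}}$ up to controlled errors, cf. \eqref{the_functional} and \eqref{alpha_and_alpha_K}, which is where the normalising prefactor $-\tfrac{\alpha_j}{(\alpha_K^{2n/(n-2)})^{(n-2)/n}}$ emerges, from the diagonal weight $\alpha_j^{\frac{n+2}{n-2}}$ and the $r/k$-factor. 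For the diagonal $i=j$ part of $\int K(\alpha^i\varphi_i)^{\frac{n+2}{n-2}}\phi_{3,j}d\mu_{g_0}$ I would Taylor expand $K$ at $a_j$: the constant and Hessian terms pair with the reflection-odd field $\phi_{3,j}=\tfrac1{\lambda_j}\nabla_{a_j}\varphi_j$ and vanish to leading order, the linear term gives $\check c_3\tfrac{\nabla K_j}{K_j\lambda_j}$ from the corresponding radial model integral, and the cubic term gives $\check c_4\tfrac{\nabla\Delta K_j}{K_j\lambda_j^3}$, the symmetric fourth-order tensor integral $\int x^{\otimes3}\otimes(\text{radial odd})$ contracted against $\nabla^3K_j$ tracing precisely to the vector $\nabla\Delta K_j$. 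The off-diagonal $i\neq j$ contributions, arising both from $\alpha^i\int L_{g_0}\varphi_i\,\phi_{3,j}d\mu_{g_0}$ and from the cross terms in $(\alpha^i\varphi_i)^{\frac{n+2}{n-2}}$, I would evaluate by Lemma \ref{lem_interactions}(iii),(v),(vii); they combine into $\check b_3\sum_{j\neq i}\tfrac{\alpha_i}{\alpha_j}\tfrac{\nabla_{a_j}}{\lambda_j}\varepsilon_{i,j}$ with one positive constant.

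It then remains to collect the discarded terms: higher Taylor terms of $K$, which are $O(\lambda_r^{-4})$; the geometric remainders of Lemma \ref{lem_emergence_of_the_regular_part} (the $H_a$ and $u_a^{2/(n-2)}R_{g_a}/\lambda$ pieces), which upon an $a_j$-differentiation remain, thanks to the conformal normal coordinate normalisation of $g_a$ about $a$, cf. \cite{lp}, of order $\sum_r\lambda_r^{-4}$ respectively $\sum_{r\neq s}\varepsilon_{r,s}^{(n+2)/n}$ — this is the reason that, in contrast to Lemma \ref{lem_lambda_derivatives_at_infinity}, no $H_j$- or $W_j$-term survives in the $a$-derivative; and the $v$-contributions already handled. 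The computation parallels Section~4 of \cite{may-cv} in low dimensions and its higher-dimensional analogues in \cite{bab1} and \cite{BenAyed_Ahmeou}, and it rests on the same bubble-interaction estimates that govern the subcritical analysis of \cite{MM1} and \cite{MM2}.

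The part I expect to be the main obstacle is twofold: verifying that the cubic Taylor coefficient genuinely contracts to $\nabla\Delta K_j$ with the correct sign, which hinges on the exact reflection symmetry of $\varphi_{a,\lambda}$ in the chosen coordinates and a careful evaluation of the radial model integrals; and checking that no purely geometric term of order $\lambda_j^{-3}$ for $n=5$ (or $\lambda_j^{-4}\ln\lambda_j$ for $n=6$) survives the $a_j$-differentiation, equivalently that the gradient of the associated Robin-type function is of lower order in the present normalisation. The remaining delicate but routine point is closing the error estimate at quadratic order through the bootstrap use of Lemma \ref{lem_v_part_gradient}.
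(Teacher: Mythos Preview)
The paper does not give a proof of this lemma at all: it simply writes ``Cf.\ Lemma \ref{I-lem_a_derivatives_at_infinity} in \cite{MM1}, for instance see also Lemma A.4.3 in \cite{bab1} or Proposition 5.1 in \cite{BenAyed_Ahmeou}.'' Your outline is precisely the standard computation carried out in those references (and in Section~4 of \cite{may-cv} for low dimensions), so your approach coincides with the intended one; the reduction of the three quantities via the chain rule and the $v$-expansion, the Taylor expansion of $K$ about $a_j$ with parity cancellations yielding the $\nabla K_j/\lambda_j$ and $\nabla\Delta K_j/\lambda_j^3$ terms, and the use of Lemma \ref{lem_interactions}(iii) for the off-diagonal $\nabla_{a_j}\varepsilon_{i,j}$ term are exactly what those sources do.
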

  \begin{proof}
Cf. Lemma \ref{I-lem_a_derivatives_at_infinity} in \cite{MM1}, for instance see also Lemma A.4.3 in \cite{bab1} or Proposition 5.1 in \cite{ BenAyed_Ahmeou}.
 \end{proof}
 
  Evidently the principal terms are  the one related to $\check{c}_{3}$,  trying to force the centres of concentration to be close to critical points of $K$, and the one related to $\check{b}_{3}$.
  
  \

  Of course the source of delicacy is, that the principle terms above are related by their error terms.
  
  \begin{proposition}\label{prop_gradient_bounds}
 For $\varepsilon>0$ sufficiently small there holds uniformly on
 $
 V(q,\varepsilon) \; \cap \; \{k=1\}
 $ 
 \begin{equation*}
\sum_{r\neq s}\frac{\vert \nabla K_{r}\vert}{\lambda_{r}}+\frac{1}{\lambda_{r}^{2}}
 +
 \vert
 1
 -
 \frac{\alpha^{2} K_{r}\alpha_{r}^{\frac{4}{n-2}} }{\alpha_{K}^{\frac{2n}{n-2}}}
 \vert
 +\varepsilon_{r,s}
 \lesssim
 \vert \partial J\vert
  \lesssim 
 \sum_{r\neq s}\frac{\vert \nabla K_{r}\vert}{\lambda_{r}}
 +
 \frac{1}{\lambda_{r}^{2}}
 +
 \vert 
 1
 -
 \frac{\alpha^{2}K_{r}\alpha_{r}^{\frac{4}{n-2}} }{\alpha_{K}^{\frac{2n}{n-2}}}
\vert
 +
 \varepsilon_{r,s}^{\frac{n+2}{2n}}
 +
 \Vert v \Vert
.
 \end{equation*}
  \end{proposition}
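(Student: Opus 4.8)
The plan is to establish the two-sided estimate of Proposition \ref{prop_gradient_bounds} by combining the three bubble-direction testings of Lemmata \ref{lem_alpha_derivatives_at_infinity}, \ref{lem_lambda_derivatives_at_infinity}, \ref{lem_a_derivatives_at_infinity} with the orthogonal testing of Lemma \ref{lem_testing_with_v} and the $v$-estimate of Lemma \ref{lem_v_part_gradient}. First I would prove the upper bound. Since $\{\phi_{k,i}\}$ together with $H_u(q,\varepsilon)$ spans $W^{1,2}$ and $\Vert \phi_{k,i}\Vert \simeq 1$, for any $u \in V(q,\varepsilon)\cap\{k=1\}$ one has $\vert \partial J(u)\vert \lesssim \sum_{k,i}\vert \partial J(u)\phi_{k,i}\vert + \sup_{\nu \in H_u,\,\Vert\nu\Vert=1}\vert\partial J(u)\nu\vert$. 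The second summand is controlled by writing $\partial J(u)\nu = \partial J(\alpha^i\varphi_i)\nu + (\partial J(u)-\partial J(\alpha^i\varphi_i))\nu$, estimating the first piece by Lemma \ref{lem_testing_with_v} and the second by $\Vert v\Vert$ times a $C^1$-bound on $\partial J$, which feeds back via Lemma \ref{lem_v_part_gradient}; after absorbing, this contributes the $\sum \frac{\vert\nabla K_r\vert}{\lambda_r}+\frac{1}{\lambda_r^2}+\varepsilon_{r,s}^{\frac{n+2}{2n}}+\Vert v\Vert$ terms. For the first summand, the leading parts of $\partial J(u)\phi_{1,j}$, $\partial J(u)\phi_{2,j}$, $\partial J(u)\phi_{3,j}$ from the three lemmata are, respectively, $\grave c_0(1-\frac{\alpha^2}{\alpha_K^{2n/(n-2)}}K_j\alpha_j^{4/(n-2)})$ up to lower order, $\tilde c_2\frac{\Delta K_j}{K_j\lambda_j^2}$ up to interactions, and $\check c_3\frac{\nabla K_j}{K_j\lambda_j}$ up to interactions; hence each of $\vert 1-\frac{\alpha^2 K_r\alpha_r^{4/(n-2)}}{\alpha_K^{2n/(n-2)}}\vert$, $\frac{1}{\lambda_r^2}$, $\frac{\vert\nabla K_r\vert}{\lambda_r}$, $\varepsilon_{r,s}$ is $\lesssim \sum\vert\partial J(u)\phi_{k,j}\vert + (\text{error terms})$, and the error terms are quadratically small, so they can be absorbed into the left side for $\varepsilon$ small. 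This gives both inequalities simultaneously.

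The lower bound $\sum_{r\neq s}\frac{\vert\nabla K_r\vert}{\lambda_r}+\frac{1}{\lambda_r^2}+\vert 1-\frac{\alpha^2 K_r\alpha_r^{4/(n-2)}}{\alpha_K^{2n/(n-2)}}\vert+\varepsilon_{r,s}\lesssim\vert\partial J\vert$ is the more delicate direction, and this is where I expect the main obstacle. The issue is that the leading terms in the three lemmata are not independent: the $\Delta K_j$ term appears both in Lemma \ref{lem_alpha_derivatives_at_infinity} (with a weighted average subtracted) and in Lemma \ref{lem_lambda_derivatives_at_infinity}, and the interaction terms $\varepsilon_{i,j}$ and $\lambda_j\partial_{\lambda_j}\varepsilon_{i,j}$, $\frac{\nabla_{a_j}}{\lambda_j}\varepsilon_{i,j}$ couple different $j$'s. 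So one cannot simply read off each quantity from a single testing. The plan is to argue quantity by quantity: to recover $\frac{\vert\nabla K_j\vert}{\lambda_j}$, use Lemma \ref{lem_a_derivatives_at_infinity} and note that the only other leading contribution there is $\check b_3\sum_{i\neq j}\frac{\alpha_i}{\alpha_j}\frac{\nabla_{a_j}}{\lambda_j}\varepsilon_{i,j}$, which by Lemma \ref{lem_interactions}(vii) is $O(\varepsilon_{i,j})$, hence summing $\vert\partial J(u)\phi_{3,j}\vert$ over $j$ controls $\sum_j\frac{\vert\nabla K_j\vert}{\lambda_j}$ modulo $\sum_{i\neq j}\varepsilon_{i,j}$; then recover $\varepsilon_{i,j}$ separately and feed back.

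To extract the interaction terms $\varepsilon_{i,j}$ and the $\frac{1}{\lambda_r^2}$, $\Delta K$-terms, the standard device is to take suitable linear combinations of the $\lambda$-testings. From Lemma \ref{lem_lambda_derivatives_at_infinity}, $\sum_j \frac{\alpha_j}{\alpha^2}\cdot(\text{the bracket})$ — more precisely a combination that makes the $\Delta K_j/\lambda_j^2$ contributions add with a definite sign once one uses $\Delta K_j < 0$ near the critical points, which is where hypothesis \eqref{nd} enters implicitly through the relevant regime — shows $\sum_r\frac{\vert\Delta K_r\vert}{\lambda_r^2} + \sum_{r\neq s}\varepsilon_{r,s}\lesssim \sum_j\vert\partial J(u)\phi_{2,j}\vert + \text{errors}$; combined with the elementary bound $\frac{1}{\lambda_r^2}\lesssim \frac{\vert\nabla K_r\vert}{\lambda_r}+\frac{\vert\Delta K_r\vert}{\lambda_r^2}$ which holds on the concentrated regime by the none-degeneracy \eqref{nd} (away from $\{\nabla K=0\}$, $\frac{1}{\lambda^2}\lesssim\frac{\vert\nabla K\vert}{\lambda}$; near it $\Delta K$ is bounded away from $0$), one gets the $\frac{1}{\lambda_r^2}$ term. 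Finally the $\alpha$-discrepancy $\vert 1-\frac{\alpha^2 K_r\alpha_r^{4/(n-2)}}{\alpha_K^{2n/(n-2)}}\vert$ comes from Lemma \ref{lem_alpha_derivatives_at_infinity}, whose $\grave c_0$-term is the dominant one, the remaining terms being $O(\sum\frac{1}{\lambda_r^2}+\varepsilon_{r,s})$ already controlled. Assembling these, the total of $\vert\partial J(u)\phi_{k,j}\vert$ over all $k,j$ dominates the asserted left-hand side up to quadratically small errors, which absorb for $\varepsilon$ small; since each $\vert\partial J(u)\phi_{k,j}\vert \le \Vert\phi_{k,j}\Vert\,\vert\partial J(u)\vert \lesssim \vert\partial J(u)\vert$, the lower bound follows. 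The bookkeeping of which error term is absorbed into which leading term, and the order in which the four quantities are peeled off, is the part requiring care; conceptually it mirrors exactly the subcritical estimates of \cite{MM1}.
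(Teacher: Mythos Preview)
Your upper bound is fine and is indeed what the paper (via \cite{MM1}) does. The lower bound, however, has a genuine gap precisely at the step you flag as ``the main obstacle'' but then do not actually resolve.

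The problem is your treatment of the $\lambda$-testings. You assert that a linear combination of the $\partial J(u)\phi_{2,j}$ makes the terms $\tilde c_{2}\frac{\Delta K_{j}}{K_{j}\lambda_{j}^{2}}$ and $-\tilde b_{2}\sum_{i\neq j}\frac{\alpha_{i}}{\alpha_{j}}\lambda_{j}\partial_{\lambda_{j}}\varepsilon_{i,j}$ add with a definite sign. Two things fail. First, you invoke $\Delta K_{j}<0$, but the proposition is stated on all of $V(q,\varepsilon)$, where the $a_{j}$ are arbitrary; nondegeneracy \eqref{nd} only gives $\Delta K\neq 0$ at critical points, not a global sign, so the $\Delta K_{j}/\lambda_{j}^{2}$ terms can cancel among each other and against the interactions. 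Second, and more seriously, a plain sum over $j$ of the interaction terms does not recover $\sum_{r\neq s}\varepsilon_{r,s}$: from \eqref{lambda_derivative_epsilon_i_j} one computes, for each pair $i\neq j$,
\[
\lambda_{i}\partial_{\lambda_{i}}\varepsilon_{i,j}+\lambda_{j}\partial_{\lambda_{j}}\varepsilon_{i,j}
= -(n-2)\,\varepsilon_{i,j}^{\frac{n}{n-2}}\,\lambda_{i}\lambda_{j}\gamma_{n}G_{g_{0}}^{\frac{2}{2-n}}(a_{i},a_{j}),
\]
which vanishes when $a_{i}=a_{j}$. Hence in the tower-bubble regime the symmetrized sum of the interaction pieces is $o(\varepsilon_{i,j})$, and no unweighted combination of the $\phi_{2,j}$-testings can dominate $\sum_{r\neq s}\varepsilon_{r,s}$. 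This is exactly the case the lower bound must handle to be useful, since the exclusion of tower bubbles later in the paper relies on it.

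What the paper actually uses (and cites as Theorem \ref{I-lem_top_down_cascade} in \cite{MM1}) is a \emph{top-down cascade}: one orders the bubbles by the size of $\lambda_{j}$ and tests with weights that increase along this ordering, so that for each pair the contribution from the larger-$\lambda$ bubble dominates, yielding $-\lambda_{j}\partial_{\lambda_{j}}\varepsilon_{i,j}\simeq \varepsilon_{i,j}$ for $\lambda_{j}\geq \lambda_{i}$. The same mechanism appears explicitly in Section \ref{section_flow_construction} of the present paper via the weights $m_{\lambda_{j}}=\kappa^{\sum_{i\neq j}m_{\lambda_{j},\lambda_{i}}}$ satisfying $m_{\lambda_{j}}\geq \kappa\, m_{\lambda_{i}}$ for $\lambda_{j}\geq 2\lambda_{i}$; see the passage around \eqref{lambda_derivative_epsilon_i_j} in the proof of Proposition \ref{prop_decreasing_the_energy}. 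Your sketch is missing this weighted/ordered combination, and without it the lower bound does not close.
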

  \begin{proof}
The lower bound  is due to  Theorem \ref{I-lem_top_down_cascade} in \cite{MM1},  the upper bound   due to  Lemma \ref{I-lem_upper_bound} in \cite{MM1}. 
\end{proof}    

Here and later on we use for two functions $a,b\geq 0$ the shorthand notation 
$$ a\lesssim b :\Longleftrightarrow \;\exists\; C>0\;:\; a\leq Cb.$$ 
We note, that the latter gradient estimates evidently prevent the existence of a solution in $V(q,\varepsilon)$ and allow us to compare the quantities appearing to  $\vert \partial J\vert $ and vice versa.
Finally we may perform an expansion of the energy itself on $V(q,\varepsilon)$, which reads as

  \begin{lemma}
  For $u=\alpha^{i}\varphi_{i}+v\in V(q,\varepsilon)$ and $\varepsilon>0$, 
  both $J(u)$ and $J ( \alpha^{i}  \varphi_{i}) $ can be written as 
  \begin{equation*}
  \frac
  {\hat c_{0}\alpha^{2}}
  {(\alpha_{K}^{\frac{2n}{n-2}})^{\frac{n-2}{n}}}
  \Bigg(
  1
  -
  \hat c_{2}\sum_{i}\frac{\Delta K_{i}}{K_{i}\lambda_{i}^{2}} \frac{\alpha_{i}^{2} }{\alpha^{2}} 
  -
  \hat b_{1}\sum_{i\neq j}\frac{\alpha_{i}\alpha_{j}}{\alpha^{2}}
  \varepsilon_{i,j}
  -
  \hat d_{1} \sum_{i}
  \frac{\alpha_{i}^{2}}{\alpha^{2}}
  \begin{pmatrix}
  \frac{H_{i}}{\lambda_{i}^{3}} & \text{for }\; n=5\\
  \frac{W_{i}\ln \lambda_{i}}{\lambda_{i}^{4}} & \text{for }\; n=6\\
  0 & \text{for }\; n\geq 7
  \end{pmatrix}
  \Bigg) 
  \end{equation*}
  with positive constants $\hat c_{0},\hat c_{1},\hat c_{2},\hat b_{1},\hat d_{1}$ 
  and up to  some
$ 
  O
  (
  \sum_{r}\frac{\vert \nabla K_{r}\vert^{2}}{\lambda_{r}^{2}}
  +
  \frac{1}{\lambda_{r}^{4}}
  +
  \sum_{r\neq s}\varepsilon_{r,s}^{\frac{n+2}{n}}
  +
  \vert \partial J(u)\vert^{2}
$ 
  \end{lemma}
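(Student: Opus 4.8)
The plan is to reduce the statement to an expansion of $J(\alpha^{i}\varphi_{i})$ alone by absorbing the $v$--dependence, then to expand the numerator $r$ and denominator $k$ of $J(\alpha^{i}\varphi_{i})$ separately using Lemmas \ref{lem_emergence_of_the_regular_part} and \ref{lem_interactions}, and finally to recombine the two through the binomial series $(1+x)^{\pm\frac{n-2}{n}}=1\pm\frac{n-2}{n}x+O(x^{2})$, reading off the constants $\hat c_{0},\hat c_{2},\hat b_{1},\hat d_{1}$ from the normalization $4n(n-1)$ and the constants $c_{k},b_{k}$ of Lemma \ref{lem_interactions}.

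\emph{Reduction to $\alpha^{i}\varphi_{i}$.} By scale invariance of $J$ we may assume $k(u)=1$, so $k(\alpha^{i}\varphi_{i})=1+O(\Vert v\Vert)$ and the segment joining $\alpha^{i}\varphi_{i}$ to $u$ stays inside $V(q,\varepsilon)\cap\{c^{-1}<k<c\}$, where $J$ is $C^{2,\alpha}$ with bounded second derivative. Taylor's formula gives
\begin{equation*}
J(u)=J(\alpha^{i}\varphi_{i})+\partial J(\alpha^{i}\varphi_{i})v+O(\Vert v\Vert^{2}).
\end{equation*}
Since $v\in H_{u}(q,\varepsilon)$, Lemma \ref{lem_testing_with_v} bounds the linear term by $\big(\sum_{r}\frac{\vert\nabla K_{r}\vert}{\lambda_{r}}+\sum_{r}\frac{1}{\lambda_{r}^{2}}+\sum_{r\neq s}\varepsilon_{r,s}^{\frac{n+2}{2n}}\big)\Vert v\Vert$, and Lemma \ref{lem_v_part_gradient} bounds $\Vert v\Vert$ by $\sum_{r}\frac{\vert\nabla K_{r}\vert}{\lambda_{r}}+\sum_{r}\frac{1}{\lambda_{r}^{2}}+\sum_{r\neq s}\varepsilon_{r,s}^{\frac{n+2}{2n}}+\vert\partial J(u)\vert$. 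Using $ab\leq a^{2}+b^{2}$ together with $\varepsilon_{r,s}^{2}\leq\varepsilon_{r,s}^{\frac{n+2}{n}}$ (valid since $\varepsilon_{r,s}<1$ and $\frac{n+2}{n}<2$), both correction terms fall inside the announced error $O\big(\sum_{r}\frac{\vert\nabla K_{r}\vert^{2}}{\lambda_{r}^{2}}+\frac{1}{\lambda_{r}^{4}}+\sum_{r\neq s}\varepsilon_{r,s}^{\frac{n+2}{n}}+\vert\partial J(u)\vert^{2}\big)$; this is exactly where the $\frac{\vert\nabla K_{r}\vert^{2}}{\lambda_{r}^{2}}$ enters. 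It therefore suffices to expand $J(\alpha^{i}\varphi_{i})=r(\alpha^{i}\varphi_{i})/k(\alpha^{i}\varphi_{i})^{\frac{n-2}{n}}$, cf. \eqref{the_functional} and \eqref{alpha_and_alpha_K}.

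\emph{Expanding numerator and denominator.} For $r(\alpha^{i}\varphi_{i})=\Vert\alpha^{i}\varphi_{i}\Vert^{2}=\sum_{i}\alpha_{i}^{2}\langle\varphi_{i},\varphi_{i}\rangle+\sum_{i\neq j}\alpha_{i}\alpha_{j}\langle\varphi_{i},\varphi_{j}\rangle$ I would insert the pointwise expansion of Lemma \ref{lem_emergence_of_the_regular_part} into the diagonal terms: $\langle\varphi_{i},\varphi_{i}\rangle=\int\varphi_{i}L_{g_{0}}\varphi_{i}\,d\mu_{g_{0}}$ equals $4n(n-1)\int\varphi_{i}^{\frac{2n}{n-2}}d\mu_{g_{0}}$ minus the regular--part integral $\int r_{a_{i}}^{n-2}H_{a_{i}}\varphi_{i}^{\frac{2n}{n-2}}d\mu_{g_{0}}=O(\lambda_{i}^{2-n})$, which, tracking the three cases for $H_{s,a}$, produces precisely $\frac{H_{i}}{\lambda_{i}^{3}}$ for $n=5$, $\frac{W_{i}\ln\lambda_{i}}{\lambda_{i}^{4}}$ for $n=6$ (the logarithm from $H_{s,a}=O(\ln r_{a})$), and nothing of order $\lambda_{i}^{-4}$ for $n\geq7$ (since $H_{s,a}=O(r_{a}^{6-n})$ forces $\lambda_{i}^{2-n}=o(\lambda_{i}^{-4})$), plus an $O(\lambda_{i}^{-4})$ remainder; for $i\neq j$, Lemma \ref{lem_interactions}(iii) gives $\langle\varphi_{i},\varphi_{j}\rangle=b_{1}\varepsilon_{i,j}+O(\varepsilon_{i,j}^{\frac{n+2}{n}})$, the correction controlled by \ref{lem_interactions}(v)--(vi). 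For $k(\alpha^{i}\varphi_{i})=\int K(\sum_{i}\alpha_{i}\varphi_{i})^{\frac{2n}{n-2}}d\mu_{g_{0}}$ I would localise near each $a_{i}$, where $\alpha_{i}\varphi_{i}$ dominates the remaining bubbles because $\varepsilon_{i,j}\ll1$, expand $(\sum_{i}\alpha_{i}\varphi_{i})^{\frac{2n}{n-2}}=\sum_{i}\alpha_{i}^{\frac{2n}{n-2}}\varphi_{i}^{\frac{2n}{n-2}}+\frac{2n}{n-2}\sum_{i\neq j}\alpha_{i}^{\frac{n+2}{n-2}}\alpha_{j}\varphi_{i}^{\frac{n+2}{n-2}}\varphi_{j}+\dots$, and Taylor--expand $K$ about $a_{i}$ in the $g_{a_{i}}$--conformal normal coordinates: the constant term assembles $\alpha_{K}^{\frac{2n}{n-2}}=\sum_{i}K_{i}\alpha_{i}^{\frac{2n}{n-2}}$, the linear term $\nabla K_{i}\cdot(\cdot-a_{i})$ integrates to a negligible remainder by oddness against the radial profile and flatness of the conformal volume element, the Hessian trace yields the term $\frac{\Delta K_{i}}{K_{i}\lambda_{i}^{2}}$, the higher Taylor terms are $O(\lambda_{i}^{-4})$, and the off--diagonal pieces again contribute $\varepsilon_{i,j}$--terms through Lemma \ref{lem_interactions}. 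Thus $r(\alpha^{i}\varphi_{i})=\hat c_{0}\alpha^{2}\cdot(1-\dots)$ and $k(\alpha^{i}\varphi_{i})=\alpha_{K}^{\frac{2n}{n-2}}\cdot(1-\dots)$, the dots being weighted sums of the three correction types (regular--part, $\Delta K$, $\varepsilon$), up to the stated error.

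\emph{Recombination and the main obstacle.} Dividing $r$ by $k^{\frac{n-2}{n}}$ via the binomial series, multiplying out, and rewriting the subleading weights through $\frac{K_{i}\alpha_{i}^{\frac{2n}{n-2}}}{\alpha_{K}^{\frac{2n}{n-2}}}=\frac{\alpha_{i}^{2}}{\alpha^{2}}\big(1+O(\sum_{r}\frac{1}{\lambda_{r}^{2}}+\sum_{r\neq s}\varepsilon_{r,s}+\vert\partial J(u)\vert)\big)$ from Lemma \ref{lem_alpha_derivatives_at_infinity} produces the asserted formula with positive constants. The genuinely delicate point is the error bookkeeping, which must hold \emph{uniformly} on $V(q,\varepsilon)$, where the ratios $\lambda_{i}/\lambda_{j}$ and the distances $d(a_{i},a_{j})$ are not individually controlled: one has to check that every product of two small quantities --- $\varepsilon_{i,j}\lambda_{k}^{-2}$, $\lambda_{i}^{-2}\lambda_{j}^{-2}$, $\varepsilon_{i,j}\varepsilon_{k,l}$, the rewriting error above multiplied against $\Delta K_{i}/\lambda_{i}^{2}$, and the higher binomial and interaction remainders of Lemma \ref{lem_interactions}(iv)--(vi) (including the slow decays $\varepsilon_{i,j}^{\beta}$ and $\varepsilon_{i,j}^{n/(n-2)}\ln\varepsilon_{i,j}$) together with $\Vert v\Vert^{2}$ --- is dominated by $O\big(\sum_{r}\frac{\vert\nabla K_{r}\vert^{2}}{\lambda_{r}^{2}}+\frac{1}{\lambda_{r}^{4}}+\sum_{r\neq s}\varepsilon_{r,s}^{\frac{n+2}{n}}+\vert\partial J(u)\vert^{2}\big)$; the exponent $\frac{n+2}{n}$ on $\varepsilon$ is precisely what is needed for this, and throughout one must keep the three dimensional regimes $n=5$, $n=6$, $n\geq7$ apart, since the regular--part contribution changes form, acquires a logarithm at $n=6$, and drops out to this order at $n\geq7$. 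The underlying integral computations are those already carried out, dimension by dimension, in \cite{may-cv} and \cite{MM1}.
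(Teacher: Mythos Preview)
Your proposal is correct and follows precisely the standard route that the paper defers to by citing Proposition~\ref{I-prop_functional_at_infinity} in \cite{MM1} and Proposition~5.6 in \cite{bab}: reduce to $J(\alpha^{i}\varphi_{i})$ via Taylor expansion and Lemmata~\ref{lem_testing_with_v}--\ref{lem_v_part_gradient}, expand $r$ and $k$ separately through Lemmata~\ref{lem_emergence_of_the_regular_part} and~\ref{lem_interactions}, then recombine binomially and convert the $\alpha$--weights using the last assertion of Lemma~\ref{lem_alpha_derivatives_at_infinity}. One small remark: the $\frac{|\nabla K_{r}|^{2}}{\lambda_{r}^{2}}$ error is not produced \emph{only} by the $v$--reduction as you suggest; it also arises in the expansion of $k(\alpha^{i}\varphi_{i})$ itself, since the linear Taylor term of $K$ against $\varphi_{i}^{\frac{2n}{n-2}}$ vanishes only to leading order and its first nonzero contribution, together with cross terms from the binomial recombination, lands in that same bin --- but this does not affect the argument.
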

  \begin{proof}
Cf. Proposition \ref{I-prop_functional_at_infinity} in \cite{MM1}, see also   Proposition 5.6 in \cite{bab}
  \end{proof}
In the following we will work with the normalisation to the unit sphere, i.e. on $\{\Vert \cdot \Vert=1\}$, whereas 
in \cite{MM1} and \cite{MM2} we have been restricting to $\{k=1\}$, i.e. to the unit sphere with respect to the 
conformal $K$-volume, cf. \eqref{the_functional}. However, along an energy decreasing flow line $u$ we have 
\begin{equation*}
 0<c<J(u)=\frac{r}{k^{\frac{n-2}{n}}}=\frac{\Vert u \Vert^{2}}{k^{\frac{n-2}{n}}}<J(u_{0})
\end{equation*}
thanks to the positivity of the Yamabe invariant, cf. \eqref{Yamabe_invariant},
and hence a control of $k$ via $\Vert u \Vert$ and vice versa.  Moreover on $V(q,\varepsilon)$ there holds 
\begin{equation*}
J(u)=\frac{\alpha^{2}}{(\alpha_{K}^{\frac{2n}{n-2}})^{\frac{n-2}{n}}} +o(1)
\; \text{ and }\; 
\frac{\alpha^{2}}{\alpha_{K}^{\frac{2n}{n-2}}}K_{i}\alpha_{i}^{\frac{4}{n-2}}=1+o(1),
\end{equation*}
cf. \eqref{alpha_and_alpha_K},
whence, as an easy computation shows, we have uniform energy control on each $V(q,\varepsilon)$ via
\begin{equation*}
J(u)=\sum_{i}(K_{i}^{\frac{2-n}{2}})^{\frac{2}{n}}+o(1) \; \text{ for }\; u\in V(q,\varepsilon).
\end{equation*}
In particular the aforegoing Lemmata are still applicable, when working on $\{\Vert \cdot \Vert=1\}$
instead of $\{k=1\}$.

\section{Flow construction}\label{section_flow_construction}
\begin{lemma} 
\label{lem_shadow_flow}
For  bounded 
$\nu \in \langle \varphi_{i},\lambda_{i}\partial_{\lambda_{i}}\varphi_{i},\frac{\nabla_{a_{i}}}{\lambda_{i}}\varphi_{i}\rangle^{\perp_{L_{g_{0}}}}$
and
$$\beta_{\alpha,a,\lambda}=(\beta_{\alpha_{i}},\beta_{a_{i}},\beta_{\lambda_{i}})
\in C^{0}(V(q,\varepsilon))$$ 
there exist 
$b_{\alpha},b_{v} \in C^{0}(V(q,\varepsilon))$
with
\begin{enumerate}[label=(\roman*)]
 \item \quad 
 $
 b_{\alpha}
=
-
\frac
{
\sum_{i}\beta_{\alpha_{i}}\alpha_{i}^{2}\Vert \varphi_{i}\Vert^{2}
}
{
\sum_{i}\alpha_{i}^{2}\Vert \varphi_{i}\Vert^{2}
}
+
O(\sum_{r \neq s}\frac{1}{\lambda_{r}^{2}}+\varepsilon_{r,s}+\Vert v \Vert^{2} + \Vert \nu \Vert^{2})
;$
\item \quad 
$
b_{v} 
=
O(\Vert v \Vert (\sum_{i}\vert \beta_{\lambda_{i}}\vert + \vert \beta_{a_{i}}\vert ) \sum_{r} \varphi_{r})\in \langle \phi_{k,i}\rangle
$
\end{enumerate}
such, that moving $u=\alpha^{i}\varphi_{i}+v \in V(q,\varepsilon) \cap \{\Vert \cdot \Vert=1\}$ along
\begin{equation*}
\frac{\dot \alpha_{i}}{\alpha_{i}}=\beta_{\alpha_{i}}+b_{\alpha}
,\;
\frac{\dot \lambda_{i}}{\lambda_{i}}=\beta_{\lambda_{i}}
,\;
\lambda_{i}\dot a_{i}=\beta_{a_{i}}
\; \text{ and }\;
\partial_{t}v=b_{v}+\nu
\end{equation*}
there holds 
$
\partial_{t}\Vert u \Vert^{2}=0
\; \text{ and }\; 
v\in  \langle \varphi_{i},\lambda_{i}\partial_{\lambda_{i}}\varphi_{i},\frac{\nabla_{a_{i}}}{\lambda_{i}}\varphi_{i}\rangle^{\perp_{L_{g_{0}}}}.
$ 
\end{lemma}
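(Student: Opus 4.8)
The plan is to differentiate the decomposition $u=\alpha^{i}\varphi_{i}+v$ along the prescribed motion and to read off $b_{\alpha}$ and $b_{v}$ from the two required constraints, which become \emph{linear} equations. With $\phi_{1,i}=\varphi_{i}$, $\phi_{2,i}=-\lambda_{i}\partial_{\lambda_{i}}\varphi_{i}$, $\phi_{3,i}=\frac{1}{\lambda_{i}}\nabla_{a_{i}}\varphi_{i}$ as in Section \ref{section_preliminaries}, the equations $\frac{\dot\lambda_{i}}{\lambda_{i}}=\beta_{\lambda_{i}}$ and $\lambda_{i}\dot a_{i}=\beta_{a_{i}}$ give $\partial_{t}\varphi_{i}=-\beta_{\lambda_{i}}\phi_{2,i}+\beta_{a_{i}}\cdot\phi_{3,i}$, hence together with $\frac{\dot\alpha_{i}}{\alpha_{i}}=\beta_{\alpha_{i}}+b_{\alpha}$ and $\partial_{t}v=b_{v}+\nu$
\begin{equation*}
\partial_{t}u
=
\sum_{i}\alpha_{i}(\beta_{\alpha_{i}}+b_{\alpha})\phi_{1,i}
-\sum_{i}\alpha_{i}\beta_{\lambda_{i}}\phi_{2,i}
+\sum_{i}\alpha_{i}\beta_{a_{i}}\cdot\phi_{3,i}
+b_{v}+\nu .
\end{equation*}
Imposing $b_{v}\in\langle\phi_{k,i}\rangle$ and recalling $\nu\in\langle\phi_{k,i}\rangle^{\perp}$, I would then ensure the two conditions (a) $\partial_{t}\langle v,\phi_{k,i}\rangle=0$ for all $k,i$ --- which keeps $v$ orthogonal and hence, by Proposition \ref{prop_optimal_choice}, keeps $(\alpha_{i},a_{i},\lambda_{i})$ the optimal representation of $u$ --- and (b) $\partial_{t}\Vert u\Vert^{2}=2\langle u,\partial_{t}u\rangle=0$.

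For (a) I would use $\nu\perp\phi_{k,i}$ and $\partial_{t}\phi_{k,i}=\beta_{\lambda_{i}}\lambda_{i}\partial_{\lambda_{i}}\phi_{k,i}+\beta_{a_{i}}\cdot\frac{1}{\lambda_{i}}\nabla_{a_{i}}\phi_{k,i}$ to get $\partial_{t}\langle v,\phi_{k,i}\rangle=\langle b_{v},\phi_{k,i}\rangle+\langle v,\partial_{t}\phi_{k,i}\rangle$. Here $\partial_{t}\phi_{1,i}=-\beta_{\lambda_{i}}\phi_{2,i}+\beta_{a_{i}}\cdot\phi_{3,i}\in\langle\phi_{l,i}\rangle$, so $\langle v,\partial_{t}\phi_{1,i}\rangle=0$ since $v\perp\langle\phi_{k,i}\rangle$; for $k=2,3$, the pointwise bounds $|\lambda_{i}\partial_{\lambda_{i}}\phi_{k,i}|,|\frac{1}{\lambda_{i}}\nabla_{a_{i}}\phi_{k,i}|\leq C\varphi_{i}$ of Lemma \ref{lem_interactions}(i) together with $\Vert\varphi_{i}\Vert\simeq1$ give $\langle v,\partial_{t}\phi_{k,i}\rangle=O(\Vert v\Vert(|\beta_{\lambda_{i}}|+|\beta_{a_{i}}|))$. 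Thus (a) is the linear system $\langle b_{v},\phi_{k,i}\rangle=-\langle v,\partial_{t}\phi_{k,i}\rangle$ for the coefficients of $b_{v}=\sum_{k,i}c^{k,i}\phi_{k,i}$; by Lemma \ref{lem_interactions}(ii)--(iv) the Gram matrix of $\{\phi_{k,i}\}$ is, for $\varepsilon$ small, a perturbation of a block diagonal matrix bounded below, hence uniformly invertible, and solving yields $c^{k,i}=O(\Vert v\Vert\sum_{r}(|\beta_{\lambda_{r}}|+|\beta_{a_{r}}|))$ and, again via $|\phi_{k,i}|\leq C\varphi_{i}$, statement (ii); note that the $k=1$ rows read simply $\langle b_{v},\phi_{1,i}\rangle=0$.

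For (b) I would expand $\langle u,\partial_{t}u\rangle$ with $u=\alpha^{j}\varphi_{j}+v$ and $v\perp\langle\phi_{k,i}\rangle$. The key cancellation is $\langle u,b_{v}\rangle=\sum_{j}\alpha_{j}\langle\phi_{1,j},b_{v}\rangle=0$ by the $k=1$ rows above; moreover $\langle u,\nu\rangle=\langle v,\nu\rangle=O(\Vert v\Vert\Vert\nu\Vert)$, and Lemma \ref{lem_interactions} (with Lemma \ref{lem_emergence_of_the_regular_part}) gives $\langle u,\phi_{1,i}\rangle=\alpha_{i}\Vert\varphi_{i}\Vert^{2}+O(\sum_{j\neq i}\varepsilon_{i,j})$ and $\langle u,\phi_{2,i}\rangle,\langle u,\phi_{3,i}\rangle=O(\lambda_{i}^{-2}+\sum_{j\neq i}\varepsilon_{i,j})$. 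So $\langle u,\partial_{t}u\rangle=0$ reduces to the scalar equation
\begin{equation*}
b_{\alpha}\sum_{i}\alpha_{i}^{2}\Vert\varphi_{i}\Vert^{2}
=
-\sum_{i}\beta_{\alpha_{i}}\alpha_{i}^{2}\Vert\varphi_{i}\Vert^{2}
+O\Big(\sum_{r\neq s}\tfrac{1}{\lambda_{r}^{2}}+\varepsilon_{r,s}+\Vert v\Vert^{2}+\Vert\nu\Vert^{2}\Big),
\end{equation*}
and since $\sum_{i}\alpha_{i}^{2}\Vert\varphi_{i}\Vert^{2}\simeq\Vert u\Vert^{2}=1$ is bounded below, dividing gives (i). Continuity of $b_{\alpha},b_{v}$ in $u$ follows since all the ingredients --- $(\alpha_{i},a_{i},\lambda_{i})$ via Proposition \ref{prop_optimal_choice}, the $\phi_{k,i}$ and their derivatives, the Gram matrix and its inverse, and $\beta$ --- are continuous in $u\in V(q,\varepsilon)$. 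Finally $\partial_{t}\Vert u\Vert^{2}=0$ with $\Vert u\Vert=1$ initially keeps $u$ on the unit sphere, and $\partial_{t}\langle v,\phi_{k,i}\rangle=0$ with $v\in\langle\phi_{k,i}\rangle^{\perp}$ initially keeps, by uniqueness for the resulting ODE on the finitely many quantities $\langle v,\phi_{k,i}\rangle$, the orthogonality $v\in\langle\varphi_{i},\lambda_{i}\partial_{\lambda_{i}}\varphi_{i},\frac{\nabla_{a_{i}}}{\lambda_{i}}\varphi_{i}\rangle^{\perp}$ along the flow.

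The point I would watch most carefully is the absence of circular dependence between the two corrections: $b_{v}$ influences the norm balance (b) only through $\langle u,b_{v}\rangle$, which is annihilated precisely by the homogeneous $k=1$ rows $\langle b_{v},\phi_{1,i}\rangle=0$, so (b) decouples into a single scalar equation for $b_{\alpha}$ alone; conversely $b_{\alpha}$ does not enter (a), since $\partial_{t}\phi_{k,i}$ depends on $\dot\lambda_{i},\dot a_{i}$ but not on $\dot\alpha_{i}$. Hence the two systems are genuinely uncoupled and linear with uniformly invertible coefficient operators, which is what both makes the construction work and produces the stated asymptotics; the only quantitative care needed is the perturbative invertibility of the Gram matrix and the bookkeeping of the error terms $\frac{1}{\lambda_{r}^{2}},\varepsilon_{r,s},\Vert v\Vert^{2},\Vert\nu\Vert^{2}$, all of which is routine given the interaction estimates of Lemma \ref{lem_interactions}.
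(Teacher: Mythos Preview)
Your argument is correct and follows essentially the same route as the paper's proof: derive the orthogonality conditions $\partial_t\langle v,\phi_{k,i}\rangle=0$, observe that the $k=1$ rows are homogeneous since $\partial_t\phi_{1,i}\in\langle\phi_{l,i}\rangle$, solve the resulting linear system for $b_v\in\langle\phi_{k,i}\rangle$, and then use the key cancellation $\langle u,b_v\rangle=0$ to reduce $\partial_t\Vert u\Vert^2=0$ to a single scalar equation for $b_\alpha$. The only cosmetic difference is that the paper solves for $b_v$ in the dual basis $\{\phi_{k,i}^*\}$ (setting the $\phi_{1,i}^*$-coefficients to zero), whereas you expand in the primal basis and invert the Gram matrix; these are equivalent, and your decoupling remark makes explicit what the paper leaves implicit.
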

\begin{remark}
Lemma \ref{lem_shadow_flow} simply tells us, that for every principal movement 
$$
(\alpha_{i},a_{i},\lambda_{i}) 
\; \text{ and } \; v
\; \text{ along } \quad 
(\beta_{\alpha_{i}},\beta_{a_{i}},\beta_{\lambda_{i}})
\; \text{ and } \quad 
\nu
$$  
we may
\begin{enumerate}[label=(\roman*)]
\item 
preserve the movement of $ (a_{i},\lambda_{i}) $ along
$ (\beta_{a_{i}},\beta_{\lambda_{i}}) $

\item modify the movement of $ (\alpha_{i},v) $
along $ (\beta_{\alpha_{i}},\nu) $ only slightly by 
$ (b_{\alpha},b_{v}) $ such, that we

\item ensure, that writing
$$
u
=
\alpha^{i}\varphi_{a_{i},\lambda_{i}}+v
=
\alpha^{i}\varphi_{i}+v
$$ 
remains compatible with the representation on 
$ V(q,\varepsilon) $  in the sense of proposition  \ref{prop_optimal_choice} and

\item preserve the norm 
$ \Vert u \Vert=\Vert u \Vert_{L_{g_{0}}} $
of 
$ u=\alpha^{i}\varphi_{i}+v$.
\end{enumerate}
\end{remark}
\begin{proof}
Let us suppose a $\langle v, \langle \phi_{k,i}\rangle\rangle_{L_{g_{0}}}=0$ orthogonality preserving evolution
\begin{equation*}
\begin{split}
\partial_{t}u
= &
\partial_{t}(\alpha^{i}\varphi_{i})+\partial_{t}v
= 
\beta_{\alpha_{i}}\alpha^{i}\varphi_{i}
+
b_{\alpha}\alpha^{i}\varphi_{i}
+
\beta_{\lambda_{i}}\alpha^{i}\lambda_{i}\partial_{\lambda_{i}}\varphi_{i}
+
\beta_{a_{i}}\alpha^{i}\frac{\nabla_{a_{i}}}{\lambda_{i}}\varphi_{i}
+
b_{v}
+
\nu \\
= &
\beta^{k,i}_{\alpha,a,\lambda}\phi_{k,i}+b_{\alpha}\alpha^{i}\varphi_{i}+b_{v}+\nu
\end{split}
\end{equation*}
exists. Then the preservation of orthogonality, i.e. 
$ 
\partial_{t}\langle v, \phi_{k,i}\rangle
=
0
$ 
for 
$\langle \cdot,\cdot \rangle = \langle \cdot,\cdot \rangle_{L_{g_{0}}}$,
necessitates
\begin{enumerate}[label=(\roman*)]
 \item in the $\alpha$-variable \quad 
$
0
\overset{!}{=}
\partial_{t}\langle v, \varphi_{i}\rangle
= 
\langle b_{v}+\nu,\varphi_{i}\rangle 
+ 
\langle 
v
,
\beta_{\lambda_{i}}\lambda_{i}\partial_{\lambda_{i}}\varphi_{i}
+
\beta_{a_{i}}\frac{\nabla_{a_{i}}}{\lambda_{i}}\varphi_{i}
\rangle 
= 
\langle b_v,\varphi_{i}\rangle 
;
$
\item in the $\lambda$-variable
 \begin{equation*}
\begin{split}
0
\overset{!}{=} &
\partial_{t}\langle v, \lambda_{i}\partial_{\lambda_{i}}\varphi_{i}\rangle
=
\langle b_{v}+\nu,\lambda_{i}\partial_{\lambda_{i}}\varphi_{i}\rangle
+
\langle
v
,
\beta_{\lambda_{i}}(\lambda_{i}\partial_{\lambda_{i}})^{2}\varphi_{i}
+
\beta_{a_{i}}\nabla_{a_{i}}\partial_{\lambda_{i}}\varphi_{i}
\rangle \\
= &
\langle b_{v},\lambda_{i}\partial_{\lambda_{i}}\varphi_{i}\rangle
+
\beta_{\lambda_{i}}
\langle
v
,
(\lambda_{i}\partial_{\lambda_{i}})^{2}\varphi_{i}
\rangle
+
\beta_{a_{i}}
\langle 
v,\nabla_{a_{i}}\partial_{\lambda_{i}}\varphi_{i}
\rangle
;
\end{split}
\end{equation*}
\item in the $a$-variable
 \begin{equation*}
\begin{split}
0
\overset{!}{=}&
\partial_{t}\langle v, \frac{\nabla_{a_{i}}}{\lambda_{i}}\varphi_{i}\rangle
=
\langle b_{v}+\nu,\frac{\nabla_{a_{i}}}{\lambda_{i}}\varphi_{i}\rangle
+
\langle
v
,
\beta_{\lambda_{i}}\lambda_{i}\partial_{\lambda_{i}}\frac{\nabla_{a_{i}}}{\lambda_{i}}\varphi_{i})
+
\beta_{a_{i}}(\frac{\nabla_{a_{i}}}{\lambda_{i}})^{2}\varphi_{i}
\rangle \\
= &
\langle b_{v},\frac{\nabla_{a_{i}}}{\lambda_{i}}\varphi_{i}\rangle
+
\beta_{\lambda_{i}}
\langle
v
,
\lambda_{i}\partial_{\lambda_{i}}\frac{\nabla_{a_{i}}}{\lambda_{i}}\varphi_{i}
\rangle
+
\beta_{a_{i}}
\langle 
v,(\frac{\nabla_{a_{i}}}{\lambda_{i}})^{2}\varphi_{i}
\rangle .
\end{split}
\end{equation*}
\end{enumerate}
Since $\vert \phi_{k,i}\vert \leq C\varphi_{i}$ pointwise and, as follows from Lemmata 
 \ref{lem_emergence_of_the_regular_part} and \ref{lem_interactions},
\begin{equation*}
\langle \phi_{k,i},\phi_{l,j} \rangle
=
c_{k,i,l,j}\delta_{k,l}\delta_{i,j}
+
O(\sum_{r\neq s}\frac{1}{\lambda_{r}^{2}}+\varepsilon_{r,s})
\; \text{ for some }\; 
c<c_{k,i,l,j}<C,
\end{equation*}
we may choose some $O(\varphi_{j})=\phi_{l,j}^{*}\in \langle \phi_{k,i} \rangle $ dual to 
$\phi_{k,i}=(\varphi_{i},\lambda_{i}\partial_{\lambda_{i}}\varphi_{i},\frac{\nabla_{a_{i}}}{\lambda_{i}}\varphi_{k,i})$ such, that 
\begin{equation}\label{phi_k_i_star_estimate}
\langle \phi_{l,j}^{*},\phi_{k,i}\rangle=\delta_{k,l}\delta_{i,j}
\; \text{ and } \; 
\Vert \phi_{k,i}^{*}-\phi_{k,i}\Vert =
O(\sum_{r\neq s}\frac{1}{\lambda_{r}^{2}}+\varepsilon_{r,s}).
\end{equation}
We then may solve $(i)$-$(iii)$ via
$
b_{v}=b^{k,i}\phi_{k,i}^{*},\; b_{1,i}=0 $ and  for $k=2,3$ 
\begin{equation}\label{precise_form_of_b}
b_{k,i}=
-
\beta_{\lambda_{i}}
\langle
v
,
\lambda_{i}\partial_{\lambda_{i}}\varphi_{k,i}
\rangle
-
\beta_{a_{i}}
\langle 
v,\frac{\nabla_{a_{i}}}{\lambda_{i}}\phi_{k,i}
\rangle.
\end{equation}
Conversely solving 
$\partial_{t}v=b_{v}+\nu$
with the above choice of $b_{v}$
we find
\begin{equation*}
\begin{split}
\langle v,\phi_{k,i}\rangle_{t=\tau}
= &
\langle v,\phi_{k,i}\rangle_{t=0}
+
\int^{\tau}_{0}
\partial_{t}\langle v,\phi_{k,i}\rangle 
=
\int^{\tau}_{0}\langle b_{v},\phi_{k,i}\rangle + \langle v,\partial_{t}\phi_{k,i}\rangle \\
= &
\int^{\tau}_{0}b_{k,i} 
+
\beta_{\lambda_{i}}
\langle v,\lambda_{i}\partial_{\lambda_{i}}\phi_{k,i}\rangle
+
\beta_{a_{i}}\langle v,\frac{\nabla_{a_{i}}}{\lambda_{i}}\phi_{k,i}\rangle 
=0.
\end{split}
\end{equation*}
Hence the statement on $b_{v}$ follows.  Therefore and in particular due to $b_{1,i}=0$  we find as well
\begin{equation*}
\begin{split}
\partial_{t}\Vert u \Vert^{2}
= & 
2
\langle 
\alpha^{i}\varphi_{i}+v
,
\frac{\dot\alpha_{i}}{\alpha_{i}}\alpha^{i}\varphi_{i}
+
\frac{\dot \lambda_{i}}{\lambda_{i}}\alpha^{i}\lambda_{i}\partial_{\lambda_{i}}\varphi_{i}
+
\lambda_{i}\dot a_{i}\alpha^{i}\frac{\nabla_{a_{i}}}{\lambda_{i}}\varphi_{i}
+
b_{v}+\nu\rangle \\
= &
2\sum_{i}\alpha_{i}^{2}(\beta_{\alpha_{i}}+b_{\alpha})\Vert \varphi_{i}\Vert^{2}
+
O(\sum_{r \neq s}\frac{1}{\lambda_{r}^{2}}+\varepsilon_{r,s}+\Vert v \Vert^{2} +\Vert \nu \Vert^{2}) ,
\end{split}
\end{equation*}
where $\Vert \cdot \Vert=\Vert \cdot \Vert_{L_{g_{0}}}$,
whence $\partial_{t}\Vert u \Vert^{2}=0$ is equivalent to putting 
\begin{equation*}
b_{\alpha}
=
-
\frac
{
\sum_{i}\beta_{\alpha_{i}}\alpha_{i}^{2}\Vert \varphi_{i}\Vert^{2}
}
{
\sum_{i}\alpha_{i}^{2}\Vert \varphi_{i}\Vert^{2}
}
+
O(\sum_{r \neq s}\frac{1}{\lambda_{r}^{2}}+\varepsilon_{r,s}+\Vert v \Vert^{2}+ \Vert \nu \Vert^{2}),
\end{equation*}
noticing that $c<\alpha_{i},\Vert \varphi_{i}\Vert<C$ on $V(q,\varepsilon)\cap \{ \Vert \cdot \Vert=1\}$.
\end{proof}

\begin{definition}\label{def_flow}
Let $ q\in \N $. Then for arbitrary constants 
\begin{equation}\label{constants}
\begin{split}
0<\varepsilon=\varepsilon_{q} \ll 1
\; \text{ and } \;
\kappa_{\alpha}=\kappa_{\alpha,q}
,\;
\kappa_{\lambda}=\kappa_{\lambda,q}
,\;
\kappa_{a}=\kappa_{a,q}
,\;
\kappa_{v}=\kappa_{v,q}\gg 1
\end{split}
\end{equation}
consider on $V(q,2\varepsilon)\cap \{\, \Vert \cdot \Vert=1\}$ the subsets
\begin{enumerate}[label=(\roman*)]
\item \quad 
$
A_{v,\kappa_{v}}=
\{
\Vert v \Vert 
\geq 
\kappa_{v}
(
\sum_{r\neq s} \frac{\vert \nabla K_{r}\vert}{\lambda_{r}}+\frac{1}{\lambda_{r}^{2}}
+
\vert 1
-
\frac{\alpha^{2}}{\alpha_{K}^{\frac{2n}{n-2}}}
K_{r}\alpha_{r}^{\frac{4}{n-2}} \vert
+
\varepsilon_{r,s}^{\frac{n+2}{2n}}
)\}
;$
 \item \quad 
 $
 A_{\alpha,\kappa_{\alpha}}
 =
 \{
\sum_{j}\vert 1-\frac{\alpha^{2}}{\alpha_{K}^{\frac{2n}{n-2}}}K_{j}\alpha_{j}^{\frac{4}{n-2}}\vert 
> \kappa_{\alpha}
(
\sum_{r\neq s}\frac{\vert \nabla K_{r}\vert}{\lambda_{r}}+\frac{1}{\lambda_{r}^{2}}
+
\varepsilon_{r,s}^{\frac{n+2}{2n}}
)
 \}
; $
\item \quad 
$
A_{a_{j},\kappa_{a}}
=
\{
\frac{\vert \nabla K_{j}\vert}{\lambda_{j}}
\geq 
\frac{\kappa_{a}}{\lambda_{j}^{2}}
\}
;$
\item \quad 
$
A_{\lambda_{j},\kappa_{\lambda}}^{\geq}
=
\{\sum_{r\neq s}\varepsilon_{r,s} \geq \frac{\kappa_{\lambda}}{\lambda_{j}^{2}}\}
;$ 
\item \quad  
$
A_{\lambda_{j},\kappa_{\lambda}}^{\leq }
=
\{\sum_{r\neq s}\varepsilon_{r,s} \leq \frac{\kappa_{\lambda}^{-1}}{\lambda_{j}^{2}}\}
$
\end{enumerate}
and to each of these subsets a  corresponding cut-off functions 
$$\eta_{\;\cdot\;}\in C^{\infty}(V(q,\varepsilon),[0,1])$$ 
satisfying 
\begin{equation*}
\begin{array}{clllllll}
(i) \quad &
\eta_{v}\lfloor_{A_{v,\kappa_{v}}} 
& = & 
1
& \; \text{ and }\; &
\eta_{v}\lfloor_{(A_{v,\sqrt{\kappa_{v}}})^{c}}
& = & 
0;
\\[1em]
(ii) \quad &  
\eta_{\alpha}\lfloor_{A_{\alpha,\kappa_{\alpha}}} 
& = & 
1
& \; \text{ and } \;  &
\eta_{\alpha}\lfloor_{(A_{\alpha,\sqrt{\kappa_{\alpha}}})^{c}}
& =& 
0;
\\[1em]
(iii) \quad &  
\eta_{a_{j}}\lfloor_{A_{a_{j},\kappa_{a}}} 
& = & 
1
& \; \text{ and } \; &
\eta_{a_{j}}\lfloor_{(A_{a_{j},\sqrt{\kappa_{a}}})^{c}}
& = & 
0;
\\[1em]
(iv) \quad &
\eta^{\geq}_{\lambda_{j}}\lfloor_{A_{\lambda_{j},\kappa_{\lambda}}^{\geq}} 
& = & 
1
& \; \text{ and } \; &
\eta^{\geq}_{\lambda_{j} }\lfloor_{(A_{\lambda_{j},\sqrt{\kappa_{\lambda}}}^{\geq})^{c}}
& = & 
0
;
\\[1em]
(v) \quad &
\eta^{\leq}_{\lambda_{j}}\lfloor_{A_{\lambda_{j},\kappa_{\lambda}}^{\leq}}
& = & 
1
& \; \text{ and } \;  &
\eta^{\leq}_{\lambda_{j} }\lfloor_{(A_{\lambda_{j},\sqrt{\kappa_{\lambda}}}^{\leq})^{c}}
& = &
0
.
\end{array}
\end{equation*}
Moreover for some monotone cut-off function 
$\eta\in C^{\infty}(\R,[0,1])$ 
with 
$$
\eta\lfloor_{(-\infty,\frac{1}{2}]}=0,
\;
\eta\lfloor_{[1,\infty)}=1
$$
define 
\begin{equation*}
m_{\lambda_{j},\lambda_{i}}=\eta(\frac{\lambda_{j}}{\lambda_{i}})\text{ and }
m_{\lambda_{j}}
=
\kappa^{\sum_{j\neq i} m_{\lambda_{j},\lambda_{i}}},\; \kappa \gg 1.
\end{equation*}
As set out in Lemma \ref{lem_shadow_flow}
we then evolve on $ V(q,2\varepsilon) $ according to 
\begin{equation*}
\begin{array}{clllllll}
(\alpha)\quad &
 \frac{\dot\alpha_{j}}{\alpha_{j}} 
& = &
\beta_{\alpha_{j}} & + & b_{\alpha}
& = &
 -
 \eta_{\alpha} (1-\eta_{v}) \Vert \varphi_{j}\Vert^{-2}
 (
 1
 -
 \frac{\alpha^{2}}{\alpha_{K}^{\frac{2n}{n-2}}}K_{j}\alpha_{j}^{\frac{4}{n-2}}
 )
 +
 b_{ \alpha};
\\[1em]
(a)\quad &
\lambda_{j}\dot a_{j}
& = &
\beta_{a_{j}} &&
& = &
(1-\eta_{\alpha})(1-\eta_{v})\eta_{a_{j}}
\frac{\nabla K_{j}}{\vert \nabla K_{j}\vert};
\\[1em]
(\lambda)\quad &
\frac{\dot \lambda_{j}}{\lambda_{j}}
& = &
\beta_{\lambda_{j}} &&
& = &
 -
 (1-\eta_{\alpha})(1-\eta_{v})
 (
\eta_{\lambda_{j}}^{\leq} \Pi_{i}(1-\eta_{a_{i}})^{m_{\lambda_{j},\lambda_{i}}}\frac{\Delta K_{j}}{\vert \Delta K_{j}\vert}
 +
 \eta_{\lambda_{j}}^{\geq}m_{\lambda_{j}}
 );
\\[1em]
(v)\quad &
\partial_{t}v
& = & 
b_{v} & + &\nu
& = &
b_{v}-C_{v}v
\\
\end{array}
\end{equation*}
with a constant $ C_{v}=C_{v,q}\gg 1 $, i.e. 
$$ 
\partial_{t}u
=
\partial_{t}(\alpha^{i}\varphi_{a_{i},\lambda_{i}})
+
\partial_{t}v
=
A_{q}(\alpha,a,\lambda,v), 
$$ 
where
\begin{equation}\label{A_alpha_a_lambda_v}
\begin{split}
A_{q}(\alpha,a,\lambda,v)
= &
- \eta_{\alpha}(1-\eta_{v})\Vert \varphi_{j}\Vert^{-2}\alpha^{j} 
(
 1
 -
 \frac{\alpha^{2}}{\alpha_{K}^{\frac{2n}{n-2}}}K_{j}\alpha_{j}^{\frac{4}{n-2}}
)
\varphi_{j} 
+
b_{\alpha}\alpha^{i}\varphi_{i}
\\
& +
(1-\eta_{\alpha})(1-\eta_{v}) \eta_{a_{j}}\alpha^{j}\langle\frac{\nabla K_{j}}{\vert \nabla K_{j}\vert},\frac{\nabla_{a_{j}}}{\lambda_{j}}\varphi_{j}\rangle \\
& - 
(1-\eta_{\alpha})(1-\eta_{v})
 \alpha^{j} 
( 
\eta_{\lambda_{j}}^{\leq}\Pi_{i}(1-\eta_{a_{i}})^{m_{\lambda_{j},\lambda_{i}}}\frac{\Delta K_{j}}{\vert \Delta K_{j}\vert}
 + 
 \eta_{\lambda_{j}}^{\geq }m_{\lambda_{j}}
 )
 \lambda_{j}\partial_{\lambda_{j}}\varphi_{j} 
 \\
 & +
 b_{v}-C_{v}v
 .
\end{split}
\end{equation}
\end{definition}

\begin{remark}\label{rem_flow_first_properties} 
\begin{enumerate}[label=(\roman*)] 
\item There holds
\begin{equation}\label{b_estimates}
b_{\alpha}
=
o(\Vert v \Vert)
+
O(\sum_{r \neq s}\frac{1}{\lambda_{r}^{2}}+\varepsilon_{r,s}),  
\end{equation}
cf. Lemma \ref{lem_shadow_flow}, since from 
\begin{equation*}
 \frac{\dot\alpha_{j}}{\alpha_{j}} 
 =
 -
 \eta_{\alpha} (1-\eta_{v}) \Vert \varphi_{j}\Vert^{-2}
 (
 1
 -
 \frac{\alpha^{2}}{\alpha_{K}^{\frac{2n}{n-2}}}K_{j}\alpha_{j}^{\frac{4}{n-2}}
 )
 +
 b_{ \alpha}
 =
 \beta_{\alpha_{j}}+b_{\alpha}
\end{equation*} 
we find 
\begin{equation*}
\sum_{i}\beta_{\alpha_{i}}\alpha_{i}^{2}\Vert \varphi_{i}\Vert^{2}
=
 0,
\end{equation*}
cf.  \eqref{alpha_and_alpha_K}. Secondly,  since
$
\nu=-C_{v}v
$, we find $\Vert \nu\Vert^{2}=o(\Vert v \Vert)$, provided 
$$\Vert \nu \Vert =C_{v}\Vert v \Vert\leq C\varepsilon=o(1),$$ 
which we may  assume as $\varepsilon \longrightarrow \infty.$ 

\item 
The  purpose of introducing $C_{v}$ is to obtain
$\partial_{t} \Vert v \Vert^{2}\leq 0$
and in fact  $b_{v}$ does not depend on $C_{v}$.  Moreover from \eqref{A_alpha_a_lambda_v} and \eqref{b_estimates} it is clear, that 
$$
\partial_{t}u\geq - C\alpha^{i}\varphi_{i}-C_{v}
$$
for a universal  $C_{\kappa,q}>0$ independent of $C_{v}$. Hence 
$\partial_{t}u\geq -C_{v}u$
by fixing $C_{v}\gg C$.

\item Whereas the movement in $\alpha$ and $a$ are obviously well defined, we note, that
 \begin{equation*}
\begin{split}
\{\eta_{\lambda_{j}}^{\leq}\Pi_{i}(1-\eta_{a_{i}})^{m_{\lambda_{j},\lambda_{i}}}\neq 0\}
\subseteq &
\{\eta_{a_{j}}\neq 1\}
=
\{\vert \nabla K_{j}\vert \leq \frac{\kappa_{a}}{\lambda_{j}}\},
\end{split}
 \end{equation*}
whence by non degeneracy, cf. \eqref{nd},
 also the movement in $\lambda$ is well defined.
\item 
The union $\cup A_{\cdot,\kappa_{\cdot}}$ covers $V(q,2\varepsilon)$ for $\varepsilon>0$ sufficiently small. Indeed 
we have 
\begin{equation*}
\forall \;1\leq i,j \leq q\;:\;
\frac{1}{\lambda_{i}^{2}}\simeq \sum_{r\neq s}\varepsilon_{r,s}\simeq \frac{1}{\lambda_{j}^{2}}
\; \text{ and }\; 
\vert \nabla K_{i}\vert \lesssim \frac{1}{\lambda_{i}}
\end{equation*}
on
$
A^{c}=V(q,2\varepsilon)\setminus \cup A_{\cdot,\kappa_{\cdot}} 
$, 
whence $\frac{1}{\lambda_{i}^{2}}\simeq (\frac{1}{\lambda_{i}\lambda_{j}})^{\frac{n-2}{2}}\simeq \frac{1}{\lambda_{j}^{2}}$, a contradiction for $n \geq 5$.
\item 
In view of Lemmata  \ref{lem_alpha_derivatives_at_infinity},   \ref{lem_lambda_derivatives_at_infinity} and  \ref{lem_a_derivatives_at_infinity} we call 
\begin{enumerate}
\item[($\alpha$)] \quad
$P(\partial J(u)\varphi_{j})=(1-\frac{\alpha^{2}}{\alpha_{K}^{\frac{2n}{n-2}}}K_{j}\alpha_{j}^{\frac{4}{n-2}});$ 
\item[($\lambda$)] \quad 
$P(\partial J(u)\lambda_{j}\partial_{\lambda_{j}})=\frac{\Delta K_{j}}{K_{j}\lambda_{j}^{2}},\;\sum_{j\neq i}\lambda_{j}\partial_{\lambda_{j}}\varepsilon_{i,j};$
\item \quad
$P(\partial J(u)\frac{\nabla_{a_{j}}}{\lambda_{j}})=\frac{\nabla K_{j}}{K_{j}\lambda_{j}}$
\end{enumerate}
the principal terms in $\alpha, \lambda$ and $a$. The flow is designed in such a way, that whenever the principal terms are dominant in the expansion of the lemmata above, their corresponding movement in $\alpha,\lambda$ and $a$ will decrease energy. Notably we move $\lambda=\max \lambda_{i}$ as little as possible by the Laplacian of $K$.
\end{enumerate}
\end{remark}
From Lemma \ref{lem_shadow_flow} and  Definition \ref{def_flow}, see \eqref{A_alpha_a_lambda_v} in particular, we may generate a flow. 
\begin{lemma}\label{easy_properties}
On
$
X=\{0\leq u \in W^{1,2}(M) \mid u\neq 0,\; \Vert u \Vert=1\}
$
the evolution 
\begin{equation*}
\partial_{t}u=\eta \sum_{q}A_{q}(\alpha,a,\lambda,v)-(1-\eta )\nabla J(u),\; \nabla=\nabla_{L_{g_{0}}}
\end{equation*}
for  a  cut-off function $\eta\in C^{\infty}_{loc}(X,[0,1])$ satisfying 
\begin{equation*}
\eta\lfloor_{\cup_{q}V(q,\varepsilon_{q})}=1 \; \text{ and } \;\eta \lfloor_{X\setminus \cup_{q}V(q,2\varepsilon_{q})}=0  
\end{equation*}
induces a semi-flow
 \begin{equation*}
\Phi:\R_{\geq 0} \times X\longrightarrow X,
 \end{equation*}
i.e. the flow exists for all times, remains non negative and preserves $\Vert \cdot \Vert=1$, 
provided  
$$
\forall\; q\in \N \;:\;
\kappa_{v}\gg 1
,\;
\kappa_{\alpha}\gg 1
,\;
\kappa_{a}\gg \kappa_{\lambda}^{2}
,\;
\sqrt{\kappa_{\lambda}}\gg \kappa^{q^{2}}
,\; 
\kappa \gg 1 \; \text{ and } \; 
0<\varepsilon \ll 1,
$$
cf. \eqref{constants}.
\end{lemma}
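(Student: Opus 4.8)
The plan is to read the evolution as an autonomous ODE $\dot u=V(u)$ on the Banach space $W^{1,2}(M,g_0)$ with $V=\eta A-(1-\eta)\nabla J$, and to establish, in this order, (a) local existence and uniqueness, (b) invariance of $\{u\geq 0\}$ and of $\{\Vert\cdot\Vert=1\}$, and (c) an a priori bound forcing global existence; the semigroup identity then follows from uniqueness. For (a): $\nabla J=L_{g_0}^{-1}\partial J$ inherits the regularity of $\partial J$, and since every $u\in X$ has $0<k(u)<\infty$ it lies in some $\mathcal A\cap\{c^{-1}<k<c\}$ on which $J\in C^{2,\alpha}$, so $\nabla J$ is locally $C^{1,\alpha}$, hence locally Lipschitz, on $X$. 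The field $A$ of \eqref{A_alpha_a_lambda_v} is assembled from the optimal parameters $(\alpha_i,a_i,\lambda_i)$ --- smooth in $u$ by Proposition \ref{prop_optimal_choice} --- from $\varphi_{a,\lambda}$ and its $\lambda\partial_\lambda$ and $\tfrac{\nabla_a}{\lambda}$ derivatives, from the smooth cut-offs $\eta_\cdot$, $m_{\lambda_j}$, and from $b_\alpha,b_v$, which solve the near-diagonal linear system of Lemma \ref{lem_shadow_flow} (uniformly solvable on $V(q,2\varepsilon)$ by \eqref{phi_k_i_star_estimate}); all the normalised gradients $\tfrac{\nabla K_j}{\vert\nabla K_j\vert}$ and Laplacians $\tfrac{\Delta K_j}{\vert\Delta K_j\vert}$ that enter $A$ are legitimate on the supports of the relevant cut-offs by Remark \ref{rem_flow_first_properties}(iii), which is precisely where the hypotheses $\kappa_a\gg\kappa_\lambda^2$, $0<\varepsilon\ll1$ and the non-degeneracy \eqref{nd} are used. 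Thus $V$ is locally Lipschitz on $X$ and Picard--Lindel\"of yields a unique maximal solution $u:[0,T^*)\to X$ through each $u_0$.

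For (b), both $A$ and $\nabla J$ are $\langle\cdot,\cdot\rangle_{L_{g_0}}$-orthogonal to $u$: the former because the precise movement ($\alpha$)--($v$) of Definition \ref{def_flow} is exactly of the shape treated in Lemma \ref{lem_shadow_flow}, with $\nu=-C_vv$ and with $\sum_i\beta_{\alpha_i}\alpha_i^2\Vert\varphi_i\Vert^2=0$ as in Remark \ref{rem_flow_first_properties}(i), which forces $\partial_t\Vert u\Vert^2=0$; the latter because scale invariance of $J$ gives $\langle\nabla J(u),u\rangle=\partial J(u)u=0$. Hence $V\perp u$ and $\Vert u(t)\Vert\equiv1$. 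For positivity I would prove the pointwise inequality $V(u)\geq-C(u)\,u$ on $M$ with $C(u)=\max\{C_v,\,2k(u)^{-\frac{n-2}{n}}\}$: inside $V(q,2\varepsilon)$ every summand of \eqref{A_alpha_a_lambda_v} other than $-C_vv$ is $\geq-C_{\kappa,q}\,\alpha^i\varphi_i$ because $\vert\phi_{k,i}\vert\leq C\varphi_i$ pointwise (Lemma \ref{lem_interactions}(i)) while the coefficients $\vert1-\tfrac{\alpha^2}{\alpha_K^{2n/(n-2)}}K_j\alpha_j^{4/(n-2)}\vert$, $m_{\lambda_j}\leq\kappa^q$, $b_\alpha$, $b_v$ are all bounded and $c<\alpha_i,\Vert\varphi_i\Vert<C$ on $V(q,2\varepsilon)\cap\{\Vert\cdot\Vert=1\}$, so, $C_v\geq C_{\kappa,q}$ being fixed (Remark \ref{rem_flow_first_properties}(ii)), one gets $A\geq-C_vu$; outside $V(q,2\varepsilon)$ positivity of the Green's function of $L_{g_0}$ gives $-\nabla J(u)=\tfrac{2}{k^{(n-2)/n}}\big(\tfrac rk L_{g_0}^{-1}(Ku^{\frac{n+2}{n-2}})-u\big)\geq-\tfrac{2}{k^{(n-2)/n}}u$, valid for every $u\geq0$; and a convex combination of such lower bounds is again one. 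Testing $\partial_t u\geq-C(u(t))u$ against $u_-=\min(u,0)$ then yields $\tfrac{d}{dt}\Vert u_-\Vert_{L^2}^2\leq-2C(u(t))\Vert u_-\Vert_{L^2}^2\leq0$, and since $u_0\geq0$ this forces $u(t)\geq0$ throughout.

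For (c) I claim $k(u(t))$ stays bounded below on $[0,T^*)$. Since $\Vert u(t)\Vert\equiv1$ we have $J(u(t))=k(u(t))^{-(n-2)/n}$, so it suffices to bound $J\circ u$ from above: whenever $u(t)\notin V(q,2\varepsilon_q)$ one has $\eta(u(t))=0$, hence $\partial_t J(u(t))=-\Vert\nabla J(u(t))\Vert^2\leq0$, whereas whenever $u(t)\in V(q,2\varepsilon_q)$ the energy expansion of Section \ref{section_preliminaries} gives $J(u(t))=\sum_i(K(a_i)^{\frac{2-n}{2}})^{\frac{2}{n}}+o(1)\leq C_q$; since $J$ is continuous and $\leq C_q$ on the closure of $V(q,2\varepsilon_q)$, combining the two cases along $[0,T^*)$ gives $J(u(t))\leq M_0:=\max\{J(u_0),C_q\}$, so $k(u(t))\geq M_0^{-n/(n-2)}=:k_0>0$. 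On the slab $\{k\geq k_0\}\cap X$ the field $V$ is uniformly bounded --- $\Vert A\Vert\leq C$ since all its constituents have norm $\simeq1$, and $\Vert\nabla J(u)\Vert\lesssim k_0^{-(n-2)/n}\big(1+k_0^{-1}\Vert u\Vert_{L^{2n/(n-2)}}^{\frac{n+2}{n-2}}\big)\lesssim1$ by elliptic regularity, Sobolev embedding and $\Vert u\Vert\equiv1$ --- so $u(\cdot)$ is Lipschitz on $[0,T^*)$, extends continuously to $u(T^*)\in X$, and by (a) the solution continues past $T^*$; hence $T^*=\infty$. Uniqueness then gives $\Phi(t+s,\cdot)=\Phi(t,\Phi(s,\cdot))$, so $\Phi:\R_{\geq0}\times X\to X$ is a semi-flow.

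The step I expect to be the main obstacle is (c): the observation that ``$J$ large'' forces the trajectory into the region $V^c(q,2\varepsilon_q)$ where the flow is a pure negative gradient flow, which clamps $J$ from above, and, coupled with it, the ensuing control of $\Vert\nabla J\Vert$ away from the locus $\{k=0\}$, which is exactly what makes the flow complete. By contrast the positivity is a mechanical Gronwall argument once \eqref{A_alpha_a_lambda_v} is inspected term by term --- this being the sole purpose of the damping constant $C_v$ --- and local existence and uniqueness are soft given Proposition \ref{prop_optimal_choice}, the regularity of $J$, and the $\kappa$-inequalities that make every normalised gradient and Laplacian occurring in $A$ well defined.
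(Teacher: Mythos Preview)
Your proof is correct and follows the same skeleton as the paper's --- local Lipschitz gives short-time existence, scaling invariance of $J$ plus Lemma~\ref{lem_shadow_flow} give $\partial_t\Vert u\Vert^2=0$, the pointwise lower bound $\partial_t u\geq -Cu$ (from $\vert\phi_{k,i}\vert\lesssim\varphi_i$ inside and Green's-function positivity outside) gives nonnegativity, and a uniform bound on the vector field along the trajectory gives global existence.

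The one genuine difference is in step (c). The paper obtains the energy bound by a \emph{forward reference} to Proposition~\ref{prop_decreasing_the_energy}, which yields $\partial_t J\leq 0$ everywhere and hence $J(u(t))\leq J(u_0)$; from this $k$ is bounded below and $\nabla J$ is uniformly bounded. You instead avoid that forward reference and argue directly: off $V(q,2\varepsilon_q)$ the evolution is pure negative gradient flow, so $\partial_t J\leq 0$, whereas on $V(q,2\varepsilon_q)$ the energy expansion $J=\sum_i K_i^{(2-n)/n}+o(1)$ pins $J\leq C_q$; patching the two regimes gives $J(u(t))\leq\max\{J(u_0),C_q\}$. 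Your route is more self-contained and does not need the delicate sign analysis of Proposition~\ref{prop_decreasing_the_energy} (nor the precise $\kappa$-hierarchy beyond what makes $A$ well defined), at the cost of the slightly weaker bound $J\leq\max\{J(u_0),C_q\}$ rather than $J\leq J(u_0)$; for the purposes of the lemma either suffices. Contrary to your closing remark, in the paper's presentation (c) is in fact the soft step --- once one is willing to cite Proposition~\ref{prop_decreasing_the_energy} --- and the substantive work is deferred to that proposition.
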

\begin{proof}
 Since $\nabla J=\nabla_{L_{g_{0}}} J$ is the $L_{g_{0}}$-gradient, we may write
$ 
\partial_{t}u=f(u)
 $ 
and $f$ is a locally smooth vectorfield on $W^{1,2}(M,\R_{\geq 0}) \setminus\{0\}$. So we have short time existence and due to 
\begin{equation*}
\partial_{t} \Vert u \Vert^{2}
=
-2(1-\eta )\langle \nabla J(u),u \rangle_{L_{g_{0}}}
+
2\eta \langle \sum_{q}A_{q}(\alpha,a,\lambda,v),u\rangle_{L_{g_{0}}}
=0,
\end{equation*}
as $\langle A_{q}(\alpha,a,\lambda,v),u\rangle_{L_{g_{0}}}=0$ on $ V(q,2\varepsilon) $ by construction and 
\begin{equation*}
\langle \nabla J(u),u \rangle_{L_{g_{0}}}
=
\partial J(u)u=0
\end{equation*}
by scaling invariance of $J$,
the $L_{g_{0}}$-norm  is preserved.   Moreover, as Proposition \ref{prop_decreasing_the_energy} will show, there holds
$ 
\partial_{t}J(u)\leq 0,
$ 
whence in combination with the positivity of the Yamabe invariant we have 
\begin{equation}\label{uniform_energy_bound}
c_{K}
<
J(u)
=
\frac{\int L_{g_{0}}uud\mu_{g_{0}}}{(\int Ku^{\frac{2n}{n-2}}d\mu_{g_{0}})^{\frac{n-2}{n}}}
=
\frac{\Vert u \Vert^{2}}{k^{\frac{n-2}{n}}}
=
\frac{1}{k^{\frac{n-2}{n}}}
<
J(u_{0})
\end{equation}
along each flow line for its time of existence.  Hence $\nabla J$ and thus $f$  are uniformly bounded along each flow line and, as is easy to see, locally smooth. Therefore every flow line exists for all times. 
Moreover, as $\vert \phi_{k,i}\vert \lesssim \varphi_{i}$, we find
from \eqref{A_alpha_a_lambda_v}, \eqref{b_estimates} and (ii) of Lemma \ref{lem_shadow_flow},
that
\begin{equation*}
A_{q}(\alpha,a,\lambda,v)
\geq 
-C (\alpha^{i}\varphi_{i})-C_{v}v \geq C_{v}u
\; \text{ on } \; V(q,2\varepsilon) 
\; \text{ for some }\; C>0
\end{equation*}
 provided $C_{v}>C$, i.e. $C_{v}>0$ is sufficiently large. Hence
\begin{equation*}
\nabla J(u)=L_{g_{0}}^{-}(\partial J(u))=\frac{2L_{g_{0}}^{-}}{k^{\frac{n-2}{n}}}
(L_{g_{0}}u-\frac{r}{k}Ku^{\frac{n+2}{n-2}})
<
\frac{2u}{k^{\frac{n-2}{n}}}
\end{equation*}
by definition of the $L_{g_{0}}$-gradient and positivity of $L_{g_{0}}^{-}\simeq G_{g_{0}}>0$.  
We conclude $\partial_{t}u >-cu$ using \eqref{uniform_energy_bound}, 
so every initially non negative or positive  flow line becomes or remains  positive for all times
 to come. 
\end{proof}
We  point out, that the long time existence part is not critical, since the flow is based on a strong gradient, i.e. the gradient corresponding  to the metric on the variational space, and thus falls into the class of ordinary differential equations, cf. \cite{bc} or \cite{BenAyed_Ahmeou},  in contrast to 
Yamabe type flows as in \cite{Brendle} or \cite{may-cv}. 
\begin{proposition}\label{prop_decreasing_the_energy}
Under $u=\Phi(\cdot,u_{0})$ there holds
\begin{equation*}
\partial_{t}J(u)
\leq
-
c_{q}
(
\sum_{r\neq s}
\frac{\vert \nabla K_{r}\vert ^{2}}{\lambda_{r}^{2}}
+
\frac{1}{\lambda_{r}^{4}}
+
\vert 1
-
\frac{\alpha^{2}}{\alpha_{K}^{\frac{2n}{n-2}}}
K_{r}\alpha_{r}^{\frac{4}{n-2}} \vert^{2}
+
\varepsilon_{r,s}^{\frac{n+2}{n}}
+
\Vert v \Vert^{2})
\; \text{ on }\; V(q,2\varepsilon)
\end{equation*}
for some $ c_{q}>0$, 
provided  
$$
\kappa_{v}\gg 1
,\;
\kappa_{\alpha}\gg 1
,\;
\kappa_{a}\gg \kappa_{\lambda}^{2}
,\;
\sqrt{\kappa_{\lambda}}\gg \kappa^{q^{2}}
,\; 
\kappa \gg 1 \; \text{ and } \; 
0<\varepsilon \ll 1,
$$
cf. \eqref{constants}, while 
$
\partial_{t}J(u)
=
-\vert \nabla J(u)\vert^{2}
\; \text{ on } \; (\cup_{q} V(q,2\varepsilon))^{c}.
$ 
\end{proposition}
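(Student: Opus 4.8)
The plan is to split according to where $u$ sits. On $(\cup_q V(q,2\varepsilon))^c$ the cut-off $\eta$ of Lemma \ref{easy_properties} vanishes, so $\partial_t u=-\nabla J(u)$ and $\partial_t J(u)=\langle\nabla J(u),\partial_t u\rangle_{L_{g_0}}=-|\nabla J(u)|^2$; this is the second assertion and nothing more is needed. On $V(q,2\varepsilon)$ one has $\partial_t u=\eta\,A(\alpha,a,\lambda,v)+(1-\eta)(-\nabla J(u))$ with $\langle A,u\rangle_{L_{g_0}}=0$ by construction and $\langle\nabla J(u),u\rangle_{L_{g_0}}=\partial J(u)u=0$ by scaling invariance of $J$, hence
\[
\partial_t J(u)=\eta\,\partial J(u)[A]-(1-\eta)\,|\nabla J(u)|^2 .
\]
Writing $R(u)$ for the bracketed right-hand side of the Proposition, the last term is nonpositive, and where $\eta<1$ (near $\partial V(q,2\varepsilon)$, where a defining constraint of $A_u(q,\varepsilon)$ has become macroscopic) the lower bound of Proposition \ref{prop_gradient_bounds} shows $-(1-\eta)|\nabla J(u)|^2$ only reinforces the bound. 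So it suffices to prove $\partial J(u)[A]\le -c_q\,R(u)$ uniformly on $V(q,2\varepsilon)$, and, using the explicit form \eqref{A_alpha_a_lambda_v} of $A$, to split $\partial J(u)[A]$ into the four groups coming from the $\alpha$-, $a$-, $\lambda$- and $v$-motions, each carrying its own cut-offs.

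For the $v$-part $\partial J(u)[b_v-C_v v]$: since $v\in H_u(q,\varepsilon)$, Lemma \ref{lem_testing_with_v} gives $\partial J(\alpha^i\varphi_i)v=o(R(u))$, and combined with the uniform positivity of $\partial^2J$ on $\langle\phi_{k,i}\rangle^{\perp_{L_{g_0}}}$ (cf.\ \cite{Rey}) one gets $\partial J(u)v\ge c\|v\|^2-o(R(u))$, whence $-C_v\partial J(u)v\le-\tfrac{c}{2}C_v\|v\|^2+o(R(u))$; since $b_v\in\langle\phi_{k,i}\rangle$ with $\|b_v\|=O(\|v\|(\sum_i|\beta_{\lambda_i}|+|\beta_{a_i}|))$ and the velocities are $O(1)$, the upper bound of Proposition \ref{prop_gradient_bounds} gives $|\partial J(u)[b_v]|\lesssim|\partial J(u)|\,\|v\|$, absorbed by $C_v\|v\|^2$ via Young's inequality for $C_v$ large. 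The correction $b_\alpha\alpha^i\varphi_i$ is harmless because $\sum_i\beta_{\alpha_i}\alpha_i^2\|\varphi_i\|^2=0$ identically — this is the identity $\sum_i\alpha_i^2(1-\tfrac{\alpha^2}{\alpha_K^{2n/(n-2)}}K_i\alpha_i^{4/(n-2)})=0$ of Remark \ref{rem_flow_first_properties}(i) — so $b_\alpha=o(\|v\|)+O(\sum_r\lambda_r^{-2}+\varepsilon_{r,s})$ by Lemma \ref{lem_shadow_flow} and its pairing with $\partial J(u)$ is higher order.

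For the $\alpha$-, $a$- and $\lambda$-groups I would feed in Lemmas \ref{lem_alpha_derivatives_at_infinity}, \ref{lem_a_derivatives_at_infinity} and \ref{lem_lambda_derivatives_at_infinity}. The $\alpha$-group becomes $-\eta_\alpha(1-\eta_v)\,c\sum_j(1-\tfrac{\alpha^2}{\alpha_K^{2n/(n-2)}}K_j\alpha_j^{4/(n-2)})^2+\mathrm{err}$ (its principal term is $\grave c_0(1-\tfrac{\alpha^2}{\cdots}K_j\alpha_j^{4/(n-2)})$); the $a$-group becomes $-(1-\eta_\alpha)(1-\eta_v)\,c\sum_j\eta_{a_j}\tfrac{|\nabla K_j|}{\lambda_j}+\mathrm{err}$ (using $\langle\tfrac{\nabla K_j}{|\nabla K_j|},\nabla K_j\rangle=|\nabla K_j|$); the $\lambda$-group becomes $-(1-\eta_\alpha)(1-\eta_v)\,c\sum_j(\eta^{\le}_{\lambda_j}\Pi_i(1-\eta_{a_i})^{m_{\lambda_j,\lambda_i}}\tfrac{|\Delta K_j|}{\lambda_j^2}+\eta^{\ge}_{\lambda_j}m_{\lambda_j}\sum_{i\ne j}|\lambda_j\partial_{\lambda_j}\varepsilon_{i,j}|)+\mathrm{err}$, where on $\{\eta_{a_j}\ne1\}$ the point $a_j$ is so close to a critical point that $|\Delta K_j|\gtrsim1$ by \eqref{nd}, and $|\lambda_j\partial_{\lambda_j}\varepsilon_{i,j}|\simeq\varepsilon_{i,j}$ by Lemma \ref{lem_interactions}(vii) with the sign arranged — via the weights $m_{\lambda_j}$ — to decrease $J$. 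Every $\mathrm{err}$ has the shape $\sum_{r\ne s}\tfrac{|\nabla K_r|^2}{\lambda_r^2}+\tfrac1{\lambda_r^4}+\varepsilon_{r,s}^{(n+2)/n}+|\partial J(u)|^2$, and $|\partial J(u)|^2\lesssim_q R(u)$ by squaring Proposition \ref{prop_gradient_bounds} (its $\varepsilon_{r,s}^{(n+2)/(2n)}$ squares to exactly $\varepsilon_{r,s}^{(n+2)/n}$). Finally, by Remark \ref{rem_flow_first_properties}(iv), $V(q,2\varepsilon)$ is covered by $A_{v,\kappa_v}$, $A_{\alpha,\kappa_\alpha}$, the $A_{a_j,\kappa_a}$ and the $A^{\ge}_{\lambda_j,\kappa_\lambda},A^{\le}_{\lambda_j,\kappa_\lambda}$; on each member the matching principal term dominates, by definition of that set, $\kappa_\bullet$ times the remaining quantities (squared where $R(u)$ is, noting $\varepsilon_{r,s}^{(n+2)/n}\le\varepsilon_{r,s}$, so the linear control in $A^{\ge}_{\lambda_j}$ suffices a fortiori, and $\varepsilon_{r,s}^{(n+2)/n}\ll\lambda_r^{-2}$ on $A^{\le}_{\lambda_j}$), hence beats all errors and cross terms once $\kappa_v,\kappa_\alpha\gg1$, $\kappa_a\gg\kappa_\lambda^2$, $\sqrt{\kappa_\lambda}\gg\kappa^{q^2}$, $\kappa\gg1$ and $\varepsilon\ll1$, chosen large in this order; summing the cases yields $\partial J(u)[A]\le-c_q R(u)$.

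The hard part is the bookkeeping across the overlaps of this covering, concentrated in the $\lambda$-motion. When several bubbles interact ($u\in A^{\ge}_{\lambda_j,\kappa_\lambda}$), the weight $m_{\lambda_j}=\kappa^{\sum_{i\ne j}m_{\lambda_j,\lambda_i}}$ makes the \emph{most} concentrated bubble contract fastest: decreasing the largest $\lambda_j$ \emph{increases} $\varepsilon_{i,j}$ and so decreases $J$, whereas decreasing a non-maximal $\lambda_j$ would increase $J$ — but only with coefficient $O(1)$, beaten by the factor $\kappa$ gained at the next level of the cascade. One must verify this wins uniformly over all configurations of the (at most $q$) concentrations, and simultaneously that on $\{\eta_{a_j}\ne1\}\cap A^{\le}_{\lambda_j,\kappa_\lambda}$ the factor $\Pi_i(1-\eta_{a_i})^{m_{\lambda_j,\lambda_i}}$ does not switch the $\Delta K_j$-motion off unless some other bubble's $a$-motion already furnishes the required $\lambda^{-2}$-control. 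This is exactly the mechanism excluding tower bubbling and is what forces the strict hierarchy $\kappa\ll\sqrt{\kappa_\lambda}$, $\kappa_\lambda^2\ll\kappa_a$; everything else is a routine, if lengthy, expansion built on Lemmas \ref{lem_emergence_of_the_regular_part}--\ref{lem_a_derivatives_at_infinity} and Proposition \ref{prop_gradient_bounds}.
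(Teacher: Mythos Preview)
Your proposal is essentially correct and follows the paper's approach: split $\partial J(u)[A]$ into the $I_v,I_\alpha,I_a,I_\lambda$ contributions, feed in Lemmata \ref{lem_testing_with_v}--\ref{lem_a_derivatives_at_infinity}, and use the cut-off thresholds to absorb the cross errors; the gluing with $-\nabla J$ is handled exactly as you say via Proposition \ref{prop_gradient_bounds}.

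Two points of comparison with the paper's execution are worth noting. First, the paper does \emph{not} organise the $I_\lambda+I_a$ estimate as a covering argument over the $A_{\bullet,\kappa_\bullet}$; instead it runs an explicit trichotomy on the size of $\max_j\tfrac{|\nabla K_j|}{\lambda_j}$ versus $\tfrac{\kappa_a}{2\underline\lambda^2}$ and of $\sum\varepsilon_{r,s}$ versus $\tfrac{1}{\kappa_\lambda\max_r m_{\lambda_r}\underline\lambda^2}$, and in the third case introduces a further decomposition $q=q_1\cup q_2$ with $q_1=\{\lambda_i>\Lambda\underline\lambda\}$ for an auxiliary $\Lambda\gg\kappa_\lambda$. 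This extra scale $\Lambda$ is what lets one drop the factor $\eta_{\lambda_j}^{\geq}$ (passing from $\sum_{j\neq i}\eta_{\lambda_j}^{\geq}m_{\lambda_j}\lambda_j\partial_{\lambda_j}\varepsilon_{i,j}$ to the unweighted sum) at the cost of an error $O(\tfrac{\kappa_\lambda\max_r m_{\lambda_r}}{\Lambda^2\underline\lambda^2})$; your covering sketch hides this step. Second, your line ``$|\lambda_j\partial_{\lambda_j}\varepsilon_{i,j}|\simeq\varepsilon_{i,j}$ by Lemma \ref{lem_interactions}(vii)'' is not quite right as stated (that lemma gives only $O(\varepsilon_{i,j})$, and for an individual pair the quantity can carry the wrong sign); what is actually used is the pairwise symmetrisation
\[
m_{\lambda_j}\lambda_j\partial_{\lambda_j}\varepsilon_{i,j}+m_{\lambda_i}\lambda_i\partial_{\lambda_i}\varepsilon_{i,j}
\;\lesssim\;-\,m_{\min(\lambda_i,\lambda_j)}\,\varepsilon_{i,j},
\]
which follows from \eqref{lambda_derivative_epsilon_i_j} together with $m_{\lambda_j}\geq m_{\lambda_i}$ for $\lambda_j\geq\lambda_i$ and $m_{\lambda_j}\geq\kappa\,m_{\lambda_i}$ for $\lambda_j\geq 2\lambda_i$. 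You do describe this cascade correctly in your final paragraph, so this is a matter of precision rather than a gap.
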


\begin{proof}
First consider the flow 
$ \partial_{t}u=A_{q}(\alpha,a,\lambda,v) $ 
on  $V(q,2\varepsilon)$ for some $q\in \N$. We then have  
\begin{equation*}
\begin{split}
I_{v}
= & 
\partial J(u) \partial_{t}v=\partial J(u)(-C_{v}v+b_{v}).
\end{split}
\end{equation*}
From \eqref{phi_k_i_star_estimate} and \eqref{precise_form_of_b} we have  
\begin{equation*}
\partial J(u)b_{v}
=
O(\Vert v \Vert (\sum_{i}\vert \beta_{\lambda_{i}}\vert + \vert \beta_{a_{i}}\vert ))
\left(
\sum_{i}(\vert \partial J(u)\phi_{2,i}\vert + \vert \partial J(u)\phi_{3,i}\vert)
+
O(\vert\partial J(u)\vert )
(\sum_{r\neq s}\frac{1}{\lambda_{r}^{2}}+\varepsilon_{r,s})
\right),
\end{equation*}
whence by virtue of Lemmata 
\ref{lem_lambda_derivatives_at_infinity},  \ref{lem_a_derivatives_at_infinity}
and Proposition \ref{prop_gradient_bounds} 
\begin{equation*}
\partial J(u)b_{v}
=
O
(
\sum_{r\neq s} \frac{\vert \nabla K_{r}\vert}{\lambda_{r}}+\frac{1}{\lambda_{r}^{2}}
+
\varepsilon_{r,s}
+
\vert 
1
-
\frac{\alpha^{2}}{\alpha_{K}^{\frac{2n}{n-2}}}
K_{r}\alpha_{r}^{\frac{4}{n-2}} 
\vert^{2}
+
\Vert v \Vert^{2}
) 
\Vert v \Vert
\end{equation*}
and thus
\begin{equation*}
\partial J(u)b_{v}
=
O(
\sum_{r}\vert 
1
-
\frac{\alpha^{2}}{\alpha_{K}^{\frac{2n}{n-2}}}
K_{r}\alpha_{r}^{\frac{4}{n-2}} 
\vert^{3}
+
\Vert v \Vert^{3})
+
O
(
\sum_{r\neq s} \frac{\vert \nabla K_{r}\vert}{\lambda_{r}}+\frac{1}{\lambda_{r}^{2}}
+
\varepsilon_{r,s}
)
\Vert v \Vert.
\end{equation*}
Moreover
by the well known positivity of 
$\partial^{2}J(\alpha^{i}\varphi_{i})>0$
on 
$H_{u}(q,2\varepsilon)=\langle \phi_{k,i}\rangle^{\perp_{L_{g_{0}}}}$  we find by expansion 
\begin{equation}\label{partial_J_u_v} 
\begin{split}
-\partial J(u)v 
\simeq &  
-\Vert v \Vert^{2}
+
O
(
\sum_{r\neq s} 
\frac{\vert \nabla K_{r}\vert^{2}}{\lambda_{r}^{2}}
+
\frac{1}{\lambda_{r}^{4}}
+
\varepsilon_{r,s}^{\frac{n+2}{n}}
),
\end{split}
\end{equation}
where we made use of  Lemma \ref{lem_testing_with_v}. 
Hence 
\begin{equation}\label{I_v_estimate_final} 
\begin{split}
I_{v}
\simeq &
-\Vert v \Vert^{2}
+
O(
\sum_{r}\vert 
1
-
\frac{\alpha^{2}}{\alpha_{K}^{\frac{2n}{n-2}}}
K_{r}\alpha_{r}^{\frac{4}{n-2}} 
\vert^{3}
+
\Vert v \Vert^{3})
+
O
(
\sum_{r\neq s} 
\frac{\vert \nabla K_{r}\vert^{2}}{\lambda_{r}^{2}}
+
\frac{1}{\lambda_{r}^{4}}
+
\varepsilon_{r,s}^{\frac{n+2}{n}}
).
\end{split}
\end{equation}
Secondly for 
\begin{equation*}
\begin{split}
I_{\alpha}
= &
\frac{\dot \alpha^{i}}{\alpha_{i}}\partial J(u)\alpha_{i}\varphi_{i}
=
\partial J(u)
(
- \eta_{\alpha}(1-\eta_{v})\Vert \varphi_{j}\Vert^{-2}\alpha^{j} 
(
 1
 -
 \frac{\alpha^{2}}{\alpha_{K}^{\frac{2n}{n-2}}}K_{j}\alpha_{j}^{\frac{4}{n-2}}
)
\varphi_{j} 
+
b_{\alpha}\alpha^{i}\varphi_{i}
)
\end{split} 
\end{equation*}
we have due to $\partial J(u)u=0$ by scaling invariance of $J$ and due to 
\eqref{b_estimates}, \eqref{partial_J_u_v}  
\begin{equation*}
b_{\alpha}\partial J(u)\alpha^{i}\varphi_{i}
= 
O(\sum_{r \neq s}\frac{1}{\lambda_{r}^{2}}+\varepsilon_{r,s}+\Vert v \Vert )
\partial J(u)v
=
O(\Vert v \Vert^{3} )
+
O
(
\sum_{r\neq s} 
\frac{\vert \nabla K_{r}\vert^{2}}{\lambda_{r}^{2}}
+
\frac{1}{\lambda_{r}^{4}}
+
\varepsilon_{r,s}^{\frac{n+2}{n}}
).
\end{equation*}
Moreover Lemma  \ref{lem_alpha_derivatives_at_infinity}  and Proposition \ref{prop_gradient_bounds} show
\begin{equation*}
\begin{split}
- \Vert \varphi_{j}\Vert^{-2}\alpha^{j} 
(
 1
 -
 \frac{\alpha^{2}}{\alpha_{K}^{\frac{2n}{n-2}}}K_{j}\alpha_{j}^{\frac{4}{n-2}}
)
\partial J(u)
\varphi_{j} 
\lesssim & 
-
\sum_{j}
\vert
1
 -
 \frac{\alpha^{2}}{\alpha_{K}^{\frac{2n}{n-2}}}K_{i}\alpha_{i}^{\frac{4}{n-2}}
 \vert^{2}   
 \\
 & + 
 O
(
\sum_{r\neq s} 
\frac{1}{\lambda_{r}^{4}}
+
\varepsilon_{r,s}^{2}
+
\vert 
1
-
\frac{\alpha^{2}}{\alpha_{K}^{\frac{2n}{n-2}}}
K_{r}\alpha_{r}^{\frac{4}{n-2}} 
\vert^{4}
+
\Vert v \Vert^{4}
)
\end{split} 
\end{equation*} 
and we obtain  
\begin{equation}\label{I_alpha_estimate_final}
\begin{split}
I_{\alpha}
\lesssim &
-
\eta_{\alpha} (1-\eta_{v})\sum_{j}\vert 1
 -
 \frac{\alpha^{2}}{\alpha_{K}^{\frac{2n}{n-2}}}K_{i}\alpha_{i}^{\frac{4}{n-2}}
 \vert^{2}  \\
& + 
O(
\sum_{r}\vert 
1
-
\frac{\alpha^{2}}{\alpha_{K}^{\frac{2n}{n-2}}}
K_{r}\alpha_{r}^{\frac{4}{n-2}} 
\vert^{4}
+
\Vert v \Vert^{3})
+
O
(
\sum_{r\neq s} 
\frac{\vert \nabla K_{r}\vert^{2}}{\lambda_{r}^{2}}
+
\frac{1}{\lambda_{r}^{4}}
+
\varepsilon_{r,s}^{\frac{n+2}{n}}
).
\end{split} 
\end{equation}
Therefore  combining \eqref{I_v_estimate_final} and \eqref{I_alpha_estimate_final} 
\begin{equation*}
\begin{split}
I_{\alpha}+I_{v}
\lesssim &
-
\Vert v \Vert^{2}
-
\eta_{\alpha} (1-\eta_{v})\sum_{j}\vert 1
 -
 \frac{\alpha^{2}}{\alpha_{K}^{\frac{2n}{n-2}}}K_{i}\alpha_{i}^{\frac{4}{n-2}}
 \vert^{2}   \\
& +
O(
\sum_{r}\vert 
1
-
\frac{\alpha^{2}}{\alpha_{K}^{\frac{2n}{n-2}}}
K_{r}\alpha_{r}^{\frac{4}{n-2}} 
\vert^{3}
+
\Vert v \Vert^{3})
+
O
(
\sum_{r\neq s} 
\frac{\vert \nabla K_{r}\vert^{2}}{\lambda_{r}^{2}}
+
\frac{1}{\lambda_{r}^{4}}
+
\varepsilon_{r,s}^{\frac{n+2}{n}}
)
\end{split}
\end{equation*} 
and recalling  the definitions of $\eta_{v}$ and $\eta_{\alpha}$ from Definition \ref{def_flow} we conclude
\begin{equation}\label{I_alpha_+_I_v_final_estimate}
\begin{split}
I_{\alpha}+I_{v}
\lesssim &
-
\Vert v \Vert^{2}
-
\eta_{\alpha} (1-\eta_{v})\sum_{j}\vert 1
 -
 \frac{\alpha^{2}}{\alpha_{K}^{\frac{2n}{n-2}}}K_{i}\alpha_{i}^{\frac{4}{n-2}}
 \vert^{2}   \\
& + 
(1-\eta_{\alpha})(1-\eta_{v})
O
(
\sum_{r\neq s} \frac{\vert \nabla K_{r}\vert^{2}}{\lambda_{r}^{2}}+\frac{1}{\lambda_{r}^{4}}
+
\varepsilon_{r,s}^{\frac{n+2}{n}}
).
\end{split} 
\end{equation}
We turn to the $\lambda$ and $a$ evolution.  By Proposition \ref{prop_gradient_bounds} and the  definitions of $\eta_{v}$ and $\eta_{\alpha}$ we find  up to some 
\begin{equation*}
(1-\eta_{\alpha})(1-\eta_{v})
O
(
\sum_{r\neq s} \frac{\vert \nabla K_{r}\vert^{2}}{\lambda_{r}^{2}}+\frac{1}{\lambda_{r}^{4}}
+
\varepsilon_{r,s}^{\frac{n+2}{n}}
)  
\end{equation*}
from  Lemma \ref{lem_lambda_derivatives_at_infinity} the relation
\begin{equation*}
\begin{split}
I_{\lambda}
=
&
\frac{\dot \lambda^{j}}{\lambda_{j}} \partial   J(u)
 \alpha_{j} \lambda_{j}\partial_{\lambda_{j}}\varphi_{j} 
= 
-
 (1-\eta_{\alpha})(1-\eta_{v})
 (
\eta_{\lambda_{j}}^{\leq} \Pi_{i}(1-\eta_{a_{i}})^{m_{\lambda_{j},\lambda_{i}}}\frac{\Delta K_{j}}{\vert \Delta K_{j}\vert}
 +
 \eta_{\lambda_{j}}^{\geq}m_{\lambda_{j}}
 )
 \\
&\quad\quad\quad\quad\quad\quad\quad\quad\quad\quad\;\;\,
(
\frac{\alpha^{j}\alpha_{j}}{(\alpha_{K}^{\frac{2n}{n-2}})^{\frac{n-2}{n}}}
(
\tilde c_{2}\frac{\Delta K_{j}}{K_{j}\lambda_{j}^{2}} 
-
\tilde b_{2}\sum_{j\neq i }\frac{\alpha_{i}}{\alpha_{j}}\lambda_{j}\partial_{\lambda_{j}}\varepsilon_{i,j}
+
\tilde  d_{1}
\begin{pmatrix}
\frac{H_{j}}{\lambda_{j}^{3 }} &\text{for }\; n=5\\
\frac{W_{j}\ln \lambda_{j}}{\lambda_{j}^{4}} &\text{for }\; n=6\\
0 &\text{for }\; n\geq 7
\end{pmatrix} 
)
) \\
= & 
 -
 \tilde c_{2} (1-\eta_{\alpha})(1-\eta_{v}) \sum_{j}
\eta_{\lambda_{j}}^{\leq} \Pi_{i}(1-\eta_{a_{i}})^{m_{\lambda_{j},\lambda_{i}}} 
\frac{\alpha_{j}^{2}}{(\alpha_{K}^{\frac{2n}{n-2}})^{\frac{n-2}{n}}}
(
\frac{\vert \Delta K_{j}\vert}{K_{j}\lambda_{j}^{2}} 
+
o(\frac{1}{\lambda_{j}^{2}})
+
O(\sum_{j\neq i}\varepsilon_{i,j})
) \\ 
& 
 +\tilde b_{2}
 (1-\eta_{\alpha})(1-\eta_{v}) 
 \sum_{j\neq i}\eta_{\lambda_{j}}^{\geq} m_{\lambda_{j}} 
(\frac{\alpha_{i}\alpha_{j}}{(\alpha_{K}^{\frac{2n}{n-2}})^{\frac{n-2}{n}}}\lambda_{j}\partial_{\lambda_{j}}\varepsilon_{i,j}
+
O(\frac{1}{\lambda_{j}^{2}}) )
\end{split}
\end{equation*}
and likewise  from Lemma \ref{lem_a_derivatives_at_infinity}
\begin{equation*}
\begin{split}
I_{a}
= &
\lambda_{j}\dot a^{j}\partial J(u)\alpha_{j}\frac{\nabla_{a_{j}}}{\lambda_{j}}\varphi_{j} 
=  
-(1-\eta_{\alpha})(1-\eta_{v})\eta_{a_{j}}\frac{\alpha^{j}\alpha_{j}}{(\alpha_{K}^{\frac{2n}{n-2}})^{\frac{n-2}{n}}}\frac{\nabla K_{j}}{\vert \nabla K_{j}\vert} 
\big(
\dot{c}_{3}\frac{\nabla K_{j}}{K_{j}\lambda_{j}}
+
O(
\frac{1}{\lambda_{j}^{3}}+\sum_{j\neq i}\varepsilon_{i,j})
\big).
\end{split}
\end{equation*} 
Thus recalling $(iii)$ of Remark \ref{rem_flow_first_properties}  and the definition of $\eta_{a_{j}}$
we have for some positive constants  $c_{i}>0$
\begin{equation}\label{dt_J_u_general_version}
\begin{split}
I_{\lambda}+I_{a}
\leq &
-
c_{1}(1-\eta_{\alpha})(1-\eta_{v})\sum_{j} \eta_{a_{j}}
\frac{\alpha_{j}^{2}}{(\alpha_{K}^{\frac{2n}{n-2}})^{\frac{n-2}{n}}}
(\frac{\vert \nabla K_{j}\vert }{K_{j}\lambda_{j}}
+
O(\sum_{j\neq i}\varepsilon_{i,j}))
\\
&
 -
 c_{2} (1-\eta_{\alpha})(1-\eta_{v}) \sum_{j}
\eta_{\lambda_{j}}^{\leq} \Pi_{i}(1-\eta_{a_{i}})^{m_{\lambda_{j},\lambda_{i}}} 
\frac{\alpha_{j}^{2}}{(\alpha_{K}^{\frac{2n}{n-2}})^{\frac{n-2}{n}}}
(
\frac{\vert \Delta K_{j}\vert}{K_{j}\lambda_{j}^{2}} 
+
O(\sum_{j\neq i}\varepsilon_{i,j})
) \\ 
&
+
c_{3}
 (1-\eta_{\alpha})(1-\eta_{v}) \sum_{j\neq i}\eta_{\lambda_{j}}^{\geq} m_{\lambda_{j}} 
(\frac{\alpha_{i}\alpha_{j}}{(\alpha_{K}^{\frac{2n}{n-2}})^{\frac{n-2}{n}}}\lambda_{j}\partial_{\lambda_{j}}\varepsilon_{i,j}
+
O(\frac{1}{\lambda_{j}^{2}}) )
\end{split}
\end{equation}
up to some $(1-\eta_{\alpha})(1-\eta_{v})
O
(
\sum_{r\neq s} \frac{\vert \nabla K_{r}\vert^{2}}{\lambda_{r}^{2}}+\frac{1}{\lambda_{r}^{4}}
+
\varepsilon_{r,s}^{\frac{n+2}{n}}
)$. 
Let us now suppose
\begin{equation*}
\exists \; 1\leq i \leq q\;:\;\frac{\vert \nabla K_{i}\vert}{\lambda_{i}}
=
\max_{j}\frac{\vert \nabla K_{j}\vert}{\lambda_{j}}
\geq \frac{\kappa_{a}}{2\underline \lambda^{2}} 
,
\; \text{ where }\; \underline \lambda=\min_{i}\lambda_{i}.
\end{equation*}
In particular we may assume  $\eta_{a_{i}}\geq \frac{1}{2}$, cf.  Definition \ref{def_flow}, and this  implies 
\begin{equation*}
\exists\; 1\leq i \leq q \;:\; 
\sum_{j}\eta_{a_{j}}\frac{\vert \nabla K_{j}\vert}{\lambda_{j}}
\simeq
\max_{j}\{\eta_{a_{j}}\frac{\vert \nabla K_{j}\vert}{\lambda_{j}}\}
\simeq 
\frac{\vert \nabla K_{i}\vert}{\lambda_{i}}
\simeq 
\sum_{j}\frac{\vert \nabla K_{j}\vert}{\lambda_{j}}.
\end{equation*}
We thus  infer from \eqref{dt_J_u_general_version}, that 
up to some 
\begin{equation*}
(1-\eta_{\alpha})(1-\eta_{v})
O
(
\sum_{r\neq s} \frac{\vert \nabla K_{r}\vert^{2}}{\lambda_{r}^{2}}+\frac{1}{\lambda_{r}^{4}}
+
\varepsilon_{r,s}^{\frac{n+2}{n}}
) 
\end{equation*}
and with possibly different constants $c_{i}>0$
\begin{equation}\label{I_lambda_I_a_first_estimate}
\begin{split}
I_{\lambda}+I_{a}
\leq &
-
c_{1}(1-\eta_{\alpha})(1-\eta_{v}) 
(
\sum_{j}\frac{\vert \nabla K_{j}\vert}{K_{j}\lambda_{j}}
+
\frac{\kappa_{a}+O(\sum_{r}m_{\lambda_{r}})}{\underline \lambda^{2}}
+
O(\sum_{r \neq s}\varepsilon_{r,s})
)
\\
&
+
c_{3}
 (1-\eta_{\alpha})(1-\eta_{v}) \sum_{j\neq i}\eta_{\lambda_{j}}^{\geq} m_{\lambda_{j}} 
\frac{\alpha_{i}\alpha_{j}}{(\alpha_{K}^{\frac{2n}{n-2}})^{\frac{n-2}{n}}}\lambda_{j}\partial_{\lambda_{j}}\varepsilon_{i,j} \\
\leq &
-
c_{1}(1-\eta_{\alpha})(1-\eta_{v})
(
\sum_{j}\frac{\vert \nabla K_{j}\vert}{K_{j}\lambda_{j}}
+
\frac{\kappa_{a}+O(\sum_{r}\kappa_{\lambda}m_{\lambda_{r}})}{\underline \lambda^{2}}
+
O(\sum_{r\neq s}\varepsilon_{r,s})
)
\\
&
+
c_{3}
 (1-\eta_{\alpha})(1-\eta_{v}) 
 \sum_{j\neq i} m_{\lambda_{j}} 
\frac{\alpha_{i}\alpha_{j}}{(\alpha_{K}^{\frac{2n}{n-2}})^{\frac{n-2}{n}}}\lambda_{j}\partial_{\lambda_{j}}\varepsilon_{i,j}
\end{split}
\end{equation}
recalling the definition of $\eta^{\geq }_{\lambda_{j}}$, cf. Definition \ref{def_flow}, for the last inequality. 
Note, that
 \begin{equation}\label{lambda_derivative_epsilon_i_j}
 -\lambda_{j}\partial_{\lambda_{j}}\varepsilon_{i,j}
 =
 \frac{n-2}{2}
 \frac
 {
 \frac{\lambda_{j}}{\lambda_{i}}
 -
 \frac{\lambda_{i}}{\lambda_{j}}
 +
 \lambda_{i}\lambda_{j}\gamma_{n}G_{g_{0}}^{\frac{2}{2-n}}(a_{i},a_{j})
 }
 {
 (
 \frac{\lambda_{j}}{\lambda_{i}}
 +
 \frac{\lambda_{i}}{\lambda_{j}}
 +
 \lambda_{i}\lambda_{j}\gamma_{n}G_{g_{0}}^{\frac{2}{2-n}}(a_{i},a_{j})
 )^{\frac{n}{2}}
 },
 \end{equation}
cf.  \eqref{eq:eij},
and recalling Definition \ref{def_flow}, there holds 
\begin{enumerate}[label=(\roman*)]
 \item \quad $m_{\lambda_{j}}\geq m_{\lambda_{i}}$ for $\lambda_{j}\geq \lambda_{i};$
\item \quad $m_{\lambda_{j}}\geq \kappa  m_{\lambda_{i}} $ for $\;\text{ for }\; \lambda_{j}\geq 2\lambda_{i}$.
\end{enumerate}
Therefore
 \begin{equation*}
\begin{split}
 \sum_{j\neq i} m_{\lambda_{j}} 
\alpha_{i}\alpha_{j}\lambda_{j}\partial_{\lambda_{j}}\varepsilon_{i,j}
\lesssim
-\sum_{i \neq  j}m_{\lambda_{i}}\varepsilon_{i,j} 
\end{split}
 \end{equation*}
 and, since $m_{\lambda_{r}}\geq \kappa \gg 1$, cf. Definition \ref{def_flow},  plugging this into \eqref{I_lambda_I_a_first_estimate} we conclude
 \begin{equation}\label{I_lambda_I_a_first_estimate_final}
\begin{split}
I_{\lambda}+I_{a}
\lesssim&
-
(1-\eta_{\alpha})(1-\eta_{v})
(
\sum_{j}\frac{\vert \nabla K_{j}\vert}{K_{j}\lambda_{j}}
+
\frac{1}{\lambda_{j}^{2}}
+
\sum_{j\neq i}\varepsilon_{i,j}
+
O(
\sum_{r\neq s} \frac{\vert \nabla K_{r}\vert^{2}}{\lambda_{r}^{2}}+\frac{1}{\lambda_{r}^{4}}
+
\varepsilon_{r,s}^{\frac{n+2}{n}}
)),
\end{split}
\end{equation}
 provided  
$\kappa_{a}\gg \sum_{r}\kappa_{\lambda}m_{\lambda_{r}}\gg 1$.
We now assume contrarily and in addition 
\begin{equation*}
\forall\; 1\leq j \leq q\;:\; \frac{\vert \nabla K_{j}\vert}{\lambda_{j}}
<
\frac{\kappa_{a}}{2\underline \lambda^{2}}
\; \text{ and } \; 
\sum_{r\neq s}  \varepsilon_{r,s}
<
\frac{1}{\kappa_{\lambda}\max_{r}m_{\lambda_{r}}\underline \lambda^{2}} . 
\end{equation*}  
Then from \eqref{dt_J_u_general_version} and recalling the definition of $\eta_{\lambda_{\cdot}}^{\geq }$
we find with possibly different constants $c_{i}>0$
\begin{equation*} 
\begin{split}
I_{\lambda}+I_{a}
\leq &
-
c_{1}(1-\eta_{\alpha})(1-\eta_{v})
(
\sum_{j} \eta_{a_{j}}
\frac{\vert \nabla K_{j}\vert }{K_{j}\lambda_{j}} 
+
O(\max_{r}m_{\lambda_{r}}\sum_{r\neq s}\varepsilon_{r,s} )
)
\\
& -
 c_{2} (1-\eta_{\alpha})(1-\eta_{v}) \sum_{j}
\eta_{\lambda_{j}}^{\leq} \Pi_{i}(1-\eta_{a_{i}})^{m_{\lambda_{j},\lambda_{i}}}  
\frac{\vert \Delta K_{j}\vert}{K_{j}\lambda_{j}^{2}} 
)
\end{split}
\end{equation*}
up to some $(1-\eta_{\alpha})(1-\eta_{v})
O
(
\sum_{r\neq s} \frac{\vert \nabla K_{r}\vert^{2}}{\lambda_{r}^{2}}+\frac{1}{\lambda_{r}^{4}}
+
\varepsilon_{r,s}^{\frac{n+2}{n}}
)$.
Note, that 
\begin{equation*}
\sum_{r\neq s}  \varepsilon_{r,s}
\leq 
\frac{1}{\kappa_{\lambda}\max_{r}m_{\lambda_{r}}\underline \lambda^{2}}  
\quad \Longrightarrow \quad
\sum_{r\neq s}\varepsilon_{r,s}
\leq 
\max_{r}m_{\lambda_{r}}
\sum_{r\neq s}\varepsilon_{r,s}
\leq 
\frac{1}{\kappa_{\lambda}\underline \lambda^{2}}
\end{equation*}
and in particular $\eta_{\underline \lambda}^{\leq}=1$. Hence up to the same error as above
\begin{equation*} 
\begin{split}
I_{\lambda}+I_{a}
\lesssim &
-
(1-\eta_{\alpha})(1-\eta_{v})
(
\sum_{j} \eta_{a_{j}}
\frac{\vert \nabla K_{j}\vert }{K_{j}\lambda_{j}} 
+
\frac{ \Pi_{i}(1-\eta_{a_{i}})^{m_{\underline \lambda ,\lambda_{i}}}   }{\underline{\lambda}^{2}}
+
O(\frac{1}{\kappa_{\lambda}\underline{\lambda}^{2}})
).
\end{split}
\end{equation*} 
Recalling Definition \ref{def_flow} there holds  
\begin{equation*}
\Pi_{i}(1-\eta_{a_{i}})^{m_{\underline \lambda,\lambda_{i}}}
=o(1)
\Longrightarrow
\exists  \; \underline \lambda \leq \lambda_{i}\leq 2 \underline \lambda
\; : \; 
(1-\eta_{a_{i}})=o(1)
\end{equation*}
and  in the latter case $\eta_{a_{i}}\simeq 1$, i.e. $ \frac{\vert \nabla K_{i}\vert}{\lambda_{i}}\geq \frac{\kappa_{a}}{2\lambda_{i}^{2}}\geq \frac{\kappa_{a}}{4\underline \lambda^{2}}$. Hence we deduce, that in any case  
\begin{equation*}
\begin{split}
I_{\lambda}+I_{a} 
\lesssim &
-
(1-\eta_{\alpha})(1-\eta_{v})
(\frac{1}{\underline \lambda^{2}}
+
O(\frac{1}{\kappa_{\lambda}\underline{\lambda}^{2}}
+
\sum_{r\neq s} \frac{\vert \nabla K_{r}\vert^{2}}{\lambda_{r}^{2}}+\frac{1}{\lambda_{r}^{4}}
+
\varepsilon_{r,s}^{\frac{n+2}{n}}
))
\end{split}
\end{equation*}
and \eqref{I_lambda_I_a_first_estimate_final} follows again, provided
$\kappa_{\lambda}\gg 1 $ is sufficiently large. 
We finally consider the remaining case 
\begin{equation*}
\forall \; 1\leq j \leq q \;:\;
\frac{\vert \nabla K_{j}\vert}{\lambda_{j}}
<
\frac{\kappa_{a}}{2\underline \lambda^{2}}
\;\text{ and }\;
\sum_{r\neq s}  \varepsilon_{r,s}
\geq 
\frac{1}{\kappa_{\lambda}\max_{r}m_{\lambda_{r}}\underline \lambda^{2}}.
\end{equation*}
Then recalling again the definition of $\eta_{\lambda_{\cdot}}^{\geq}$ we find from \eqref{dt_J_u_general_version} 
\begin{equation*}
\begin{split}
I_{\lambda}+I_{a}
\leq &
c_{3}
 (1-\eta_{\alpha})(1-\eta_{v}) 
(
 \sum_{j\neq i}\eta_{\lambda_{j}}^{\geq} m_{\lambda_{j}} 
\frac{\alpha_{i}\alpha_{j}}{(\alpha_{K}^{\frac{2n}{n-2}})^{\frac{n-2}{n}}}\lambda_{j}\partial_{\lambda_{j}}\varepsilon_{i,j} 
+
O
(
(1+\frac{\max_{r}m_{\lambda_{r}}}{\sqrt{\kappa_{\lambda}}})
\sum_{r\neq s}\varepsilon_{r,s}
)
)
\end{split}
\end{equation*} 
up to some 
\begin{equation*}
(1-\eta_{\alpha})(1-\eta_{v})
O(
\sum_{r\neq s} \frac{\vert \nabla K_{r}\vert^{2}}{\lambda_{r}^{2}}+\frac{1}{\lambda_{r}^{4}}
+
\varepsilon_{r,s}^{\frac{n+2}{n}}
).
\end{equation*}
Let us decompose for  some $\Lambda\gg 1 $
and with a slight abuse of notation
\begin{equation*}
q=q_{1}+q_{2}, \text{ where } 
q_{1}=\{1\leq i \leq q \mid \lambda_{i}>\Lambda \underline \lambda \}
\text{ and } q_{2}=q\setminus q_{1}.
\end{equation*}
In particular for $i,j\in q_{2}$ we have  
\begin{equation*}  
\Lambda^{-1}\leq \frac{\lambda_{i}}{\lambda_{j}}\leq \Lambda
\; \text{ and } \; 
\vert \nabla K_{i}\vert \leq \frac{\kappa_{a}\Lambda}{2\underline{\lambda}}
\xrightarrow{\varepsilon \to 0} 0.
\end{equation*}
Hence, if $a_{i},a_{j}$ for $i,j\in q_{2}$ were close to the same critical point, we find, cf. \ref{eq:eij},  the  contradiction
\begin{equation*}
0\xleftarrow{\,\varepsilon\to 0\,}
\varepsilon_{i,j}
\simeq
(
\frac{1}
{\Lambda+\Lambda^{2}\underline \lambda^{2}(\frac{\kappa_{a}\Lambda}{2\underline{\lambda}})^{2}}
)^{\frac{n-2}{2}}
\simeq 
\frac{1}{(\kappa_{a}\Lambda^{2})^{n-2}}
\hspace{4pt}\not \hspace{-4pt}\longrightarrow 0\; \text{ as }\; \varepsilon \longrightarrow 0.
\end{equation*}
Hence for $i,j \in q_{2}$ we may assume, that  $a_{i},a_{j}$ are close to different critical points of $K$, whence 
\begin{equation*}
\varepsilon_{i,j}\simeq (\frac{1}{\lambda_{i}\lambda_{j}})^{\frac{n-2}{2}}
\leq 
\frac{1}{\underline{\lambda}^{n-2}}.
\end{equation*} 
Moreover for $j\in q_{1}$ 
\begin{equation*}
\{\eta_{\lambda_{j}}^{\geq}<1\}
=
\{\sum_{r \neq s }\varepsilon_{r,s}<\frac{\kappa_{\lambda}}{\lambda_{j}^{2}}\}
\subseteq
\{\sum_{r \neq s }\varepsilon_{r,s}<\frac{\kappa_{\lambda}}{\Lambda^{2} \underline \lambda^{2}}\} .
\end{equation*}
Consequently 
\begin{equation*}
\begin{split}
I_{\lambda} + I_{a}
\leq &
c_{3}
 (1-\eta_{\alpha})(1-\eta_{v}) \sum_{q_{1}\ni j\neq i}m_{\lambda_{j}} 
\frac{\alpha_{i}\alpha_{j}}{(\alpha_{K}^{\frac{2n}{n-2}})^{\frac{n-2}{n}}}\lambda_{j}\partial_{\lambda_{j}}\varepsilon_{i,j}
 \\
& +
c_{3}
 (1-\eta_{\alpha})(1-\eta_{v}) \sum_{q_{2}\ni j\neq i \in q_{1}}\eta_{\lambda_{j}}^{\geq} m_{\lambda_{j}} 
\frac{\alpha_{i}\alpha_{j}}{(\alpha_{K}^{\frac{2n}{n-2}})^{\frac{n-2}{n}}}\lambda_{j}\partial_{\lambda_{j}}\varepsilon_{i,j}
\\
&+
(1-\eta_{\alpha})(1-\eta_{v}) 
O
(
\frac{\kappa_{\lambda}\max_{r}m_{\lambda_{r}}}{\Lambda^{2} \underline \lambda^{2}}
+
\frac{\max_{r}m_{\lambda_{r}}}{\underline \lambda^{n-2}} 
+
(1+\frac{\max_{r}m_{\lambda_{r}}}{\sqrt{\kappa_{\lambda}}})\sum_{r\neq s}\varepsilon_{r,s}
)
\end{split}
\end{equation*}
 up to some 
\begin{equation*}
(1-\eta_{\alpha})(1-\eta_{v})
O
(
\sum_{r\neq s} \frac{\vert \nabla K_{r}\vert^{2}}{\lambda_{r}^{2}}+\frac{1}{\lambda_{r}^{4}}
+
\varepsilon_{r,s}^{\frac{n+2}{n}}
)
\end{equation*}
and rearranging this we obtain up to the same error
\begin{equation*}
\begin{split}
I_{\lambda}+I_{a}
\leq &
c_{3}
 (1-\eta_{\alpha})(1-\eta_{v}) \sum_{q_{1}\ni j\neq i \in q_{1}}m_{\lambda_{j}} 
\frac{\alpha_{i}\alpha_{j}}{(\alpha_{K}^{\frac{2n}{n-2}})^{\frac{n-2}{n}}}\lambda_{j}\partial_{\lambda_{j}}\varepsilon_{i,j} \\
& +
c_{3}
 (1-\eta_{\alpha})(1-\eta_{v})
 \sum_{q_{1}\ni j\neq i \in q_{2}}
 \frac{\alpha_{i}\alpha_{j}}{(\alpha_{K}^{\frac{2n}{n-2}})^{\frac{n-2}{n}}}
 (
 m_{\lambda_{j}} \lambda_{j}\partial_{\lambda_{j}}
 +
 \eta_{\lambda_{i}}^{\geq} m_{\lambda_{i}} 
\lambda_{i} \partial_{\lambda_{i}}
 )
\varepsilon_{i,j}
\\
&+
(1-\eta_{\alpha})(1-\eta_{v}) 
O
(
\frac{\kappa_{\lambda}\max_{r}m_{\lambda_{r}}}{\Lambda^{2} \underline \lambda^{2}}
+
\frac{\max_{r}m_{\lambda_{r}}}{\underline \lambda^{n-2}} 
+
(1+\frac{\max_{r}m_{\lambda_{r}}}{\sqrt{\kappa_{\lambda}}})\sum_{r\neq s}\varepsilon_{r,s}
) .
\end{split}
\end{equation*}
Recalling \eqref{lambda_derivative_epsilon_i_j}
and from  Definition \ref{def_flow} 
\begin{enumerate}[label=(\roman*)]
 \item \quad $m_{\lambda_{j}}\geq m_{\lambda_{i}}$ for $\lambda_{j}\geq \lambda_{i};$
\item \quad $m_{\lambda_{j}}\geq \kappa  m_{\lambda_{i}} $ for $\;\text{ for }\; \lambda_{j}\geq 2\lambda_{i}$,
\end{enumerate}
we find
\begin{equation*}
 \sum_{q_{1}\ni j\neq i \in q_{1}}m_{\lambda_{j}} 
\frac{\alpha_{i}\alpha_{j}}{(\alpha_{K}^{\frac{2n}{n-2}})^{\frac{n-2}{n}}}\lambda_{j}\partial_{\lambda_{j}}\varepsilon_{i,j}
\lesssim
-
\sum_{q_{1}\ni j\neq i \in q_{1}}m_{\lambda_{i}} 
\varepsilon_{i,j}
\end{equation*}
and  using $\lambda_{j}\geq \lambda_{i}$ for $j\in q_{1}$ and $i\in q_{2}$
\begin{equation*}
\sum_{q_{1}\ni j\neq i \in q_{2}}
 \frac{\alpha_{i}\alpha_{j}}{(\alpha_{K}^{\frac{2n}{n-2}})^{\frac{n-2}{n}}}
 (
 m_{\lambda_{j}} \lambda_{j}\partial_{\lambda_{j}}
 +
 \eta_{\lambda_{i}}^{\geq} m_{\lambda_{i}} 
\lambda_{i} \partial_{\lambda_{i}}
 )
\varepsilon_{i,j}
\lesssim 
\sum_{q_{1}\ni j\neq i \in q_{2}}
m_{\lambda_{i}}
\varepsilon_{i,j}.
\end{equation*}
Therefore and recalling $\varepsilon_{i,j}\lesssim \frac{1}{\underline{\lambda}^{n-2}}$ for $i,j\in q_{2}$ 
 \begin{equation*}
\begin{split}
I_{\lambda}+I_{a}
\lesssim & 
 -(1-\eta_{\alpha})(1-\eta_{v}) 
(\sum_{i\neq j}  m_{\lambda_{i}}\varepsilon_{i,j}
+
O
(
\frac{\kappa_{\lambda}\max_{r}m_{\lambda_{r}}}{\Lambda^{2} \underline \lambda^{2}}
+
\frac{\max_{r}m_{\lambda_{r}}}{\underline \lambda^{n-2}} 
+
(1+\frac{\max_{r}m_{\lambda_{r}}}{\sqrt{\kappa_{\lambda}}})\sum_{r\neq s}\varepsilon_{r,s}
)
)
\end{split}
\end{equation*}
up to some 
\begin{equation*}
(1-\eta_{\alpha})(1-\eta_{v})
O
(
\sum_{r\neq s} \frac{\vert \nabla K_{r}\vert^{2}}{\lambda_{r}^{2}}+\frac{1}{\lambda_{r}^{4}}
+
\varepsilon_{r,s}^{\frac{n+2}{n}}
).
\end{equation*}
Consequently  \eqref{I_lambda_I_a_first_estimate_final} follows again and thus in any case 
from $\kappa^{q^{2}}\geq \max_{r}m_{\lambda_{r}}$ and 
upon choosing 
$$\sqrt{\Lambda} \gg \sqrt{\kappa_{\lambda}}\gg \kappa^{q^{2}}.$$ 
We therefore conclude combining \eqref{I_alpha_+_I_v_final_estimate} 
and \eqref{I_lambda_I_a_first_estimate_final}, that on $V(q,\varepsilon)$ for $\varepsilon>0$ sufficiently small 
\begin{equation*}
\begin{split}
I_{\alpha}+I_{\lambda}+I_{a}+I_{v}
\lesssim &
-
\Vert v \Vert^{2}
-
\eta_{\alpha} (1-\eta_{v})\sum_{j}\vert 1
 -
 \frac{\alpha^{2}}{\alpha_{K}^{\frac{2n}{n-2}}}K_{i}\alpha_{i}^{\frac{4}{n-2}}
 \vert^{2}   \\
& -
(1-\eta_{\alpha})(1-\eta_{v})
(
\sum_{j}\frac{\vert \nabla K_{j}\vert}{K_{j}\lambda_{j}}
+
\frac{1}{\lambda_{j}^{2}}
+
\sum_{j\neq i}\varepsilon_{i,j}
+
O(
\sum_{r\neq s} \frac{\vert \nabla K_{r}\vert^{2}}{\lambda_{r}^{2}}+\frac{1}{\lambda_{r}^{4}}
+
\varepsilon_{r,s}^{\frac{n+2}{n}}
)),
\end{split}
\end{equation*} 
which recalling the definitions of $\eta_{v},\eta_{\alpha}$, cf. Definition \ref{def_flow}  simplifies to 
\begin{equation}\label{I_alpha_+_I_v_+I_lambda+I_afinal_estimate}
\begin{split}
I_{\alpha} & +I_{\lambda}+I_{a}+I_{v} \\
\lesssim  &
-
\Vert v \Vert^{2}
-
\eta_{\alpha} (1-\eta_{v})\sum_{j}\vert 1
 -
 \frac{\alpha^{2}}{\alpha_{K}^{\frac{2n}{n-2}}}K_{i}\alpha_{i}^{\frac{4}{n-2}}
 \vert^{2}   
 -
(1-\eta_{\alpha})(1-\eta_{v})
(
\sum_{j}\frac{\vert \nabla K_{j}\vert}{K_{j}\lambda_{j}}
+
\frac{1}{\lambda_{j}^{2}}
+
\sum_{j\neq i}\varepsilon_{i,j}
) \\
\lesssim & 
-
\Vert v \Vert^{2}
-
\sum_{j}
(
\vert 1
 -
 \frac{\alpha^{2}}{\alpha_{K}^{\frac{2n}{n-2}}}K_{i}\alpha_{i}^{\frac{4}{n-2}}
 \vert^{2}    
 +
\frac{\vert \nabla K_{j}\vert^{2}}{K_{j}\lambda_{j}^{2}}
+
\frac{1}{\lambda_{j}^{4}}
+
\sum_{j\neq i}\varepsilon_{i,j}^{\frac{n+2}{n}}
) 
.
\end{split} 
\end{equation} 
As for the gluing  with the gradient flow on some  $V(q,2\varepsilon)\setminus V(q,\varepsilon)$ 
via $\eta$,  i.e. 
\begin{equation*}
\partial_{t}u=\eta A_{q}(\alpha,a,\lambda,v)-(1-\eta)\nabla J(u),
\end{equation*}
we remark, that with a suitably small constant $ c_{q}>0 $ from  \eqref{I_alpha_+_I_v_+I_lambda+I_afinal_estimate} we now have
\begin{equation*}
\eta \partial J(u)A_{q}(\alpha,a,\lambda,v)
\leq
- c_{q} \eta 
(
\sum_{r\neq s}
\frac{\vert \nabla K_{r}\vert }{\lambda_{r}^{2}}
+
\frac{1}{\lambda_{r}^{4}}
+
\vert 1
-
\frac{\alpha^{2}}{\alpha_{K}^{\frac{2n}{n-2}}}
K_{r}\alpha_{r}^{\frac{4}{n-2}} \vert^{2}
+
\varepsilon_{r,s}^{\frac{n+2}{n}}
+
\Vert v \Vert^{2}
),
\end{equation*}
on $ V(q,2\varepsilon) $, but also 
\begin{equation*}
(1-\eta)\vert \nabla J(u)\vert^{2} 
\gtrsim
c_{q}(1-\eta)
(
\sum_{r\neq s}
\frac{\vert \nabla K_{r}\vert^{2} }{\lambda_{r}^{2}}
+
\frac{1}{\lambda_{r}^{4}}
+
\vert 1
-
\frac{\alpha^{2}}{\alpha_{K}^{\frac{2n}{n-2}}}
K_{r}\alpha_{r}^{\frac{4}{n-2}} \vert^{2}
+
\varepsilon_{r,s}^{\frac{n+2}{n}}
+
\Vert v \Vert^{2})
\;\text{ on }\; 
V(q,2\varepsilon_{q})
\end{equation*} 
due to  Proposition \ref{prop_gradient_bounds} and Lemma \ref{lem_v_part_gradient}.
Thence the proposition follows.  
 \end{proof} 
 
Let us show, that a flow line, which at least up to a sequence in time concentrates for some $q$ eventually for every $\varepsilon>0$ in  
$V(q,\varepsilon)$, then the whole flow line will eventually stay for every $\varepsilon>0$ in  
$V(q,\varepsilon)$. In particular such a flow line will be eventually governed by the prescribed movements 
$(\alpha),(a),(\lambda)$ and $(v)$ as in Definition \ref{def_flow}, i.e. the patching with the gradient flow will be irrelevant.  
\begin{lemma}\label{lem_capturing_the_flow} 
If for a flow line $u=\Phi(\cdot,u_{0})$ 
\begin{equation*}
u_{t_{k}}=\Phi(t_{k},u_{0})\in V(q,\varepsilon_{k}) \text{ with }\varepsilon_{k}\longrightarrow 0,
\end{equation*}
then for every $\varepsilon>0$ there exists $T_{\varepsilon}>0$ such, that
\begin{equation*}
u=\Phi(t,u_{0}) \in V(q,\varepsilon)\text{ for all } t>T_{\varepsilon}.
\end{equation*}
\end{lemma}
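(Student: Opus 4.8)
The plan is a trapping argument driven by the energy monotonicity of Proposition \ref{prop_decreasing_the_energy}. Write $\mathcal D(u)$ for the quantity on the right of that proposition,
\[
\mathcal D(u)=\sum_{r\neq s}\Big(\tfrac{\vert\nabla K_r\vert^2}{\lambda_r^2}+\tfrac1{\lambda_r^4}+\big\vert 1-\tfrac{\alpha^2}{\alpha_K^{2n/(n-2)}}K_r\alpha_r^{4/(n-2)}\big\vert^2+\varepsilon_{r,s}^{(n+2)/n}\Big)+\Vert v\Vert^2,
\]
the parameters being those of the optimal representation of Proposition \ref{prop_optimal_choice}. Since $\partial_t J(u_t)\le 0$ for all $t$ and $J(u_t)>c_K>0$ along the flow by positivity of the Yamabe invariant, cf. \eqref{uniform_energy_bound}, the energy converges, $J(u_t)\searrow J_\infty$, hence $J(u_{t_k})-J_\infty\to 0$. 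Fix $\varepsilon>0$; we may assume $\varepsilon<\varepsilon_q/2$, so that on $V(q,2\varepsilon)$ the flow is the one of Definition \ref{def_flow} and the optimal representation is available. Set $\mathcal A_\varepsilon:=V(q,\varepsilon)\setminus V(q,\varepsilon/2)$. If $u\in\mathcal A_\varepsilon$ then $A_u(q,\varepsilon/2)=\emptyset$ while the optimal choice lies in $A_u(q,\varepsilon)$, so at least one of $\lambda_i^{-1},\varepsilon_{i,j},\vert 1-\cdots\vert,\Vert v\Vert$ is $\ge\varepsilon/2$; since $(n+2)/n<4$ and $2<4$ this forces $\mathcal D(u)\ge(\varepsilon/2)^4$, and Proposition \ref{prop_decreasing_the_energy} (applicable as $\mathcal A_\varepsilon\subset V(q,2\varepsilon_q)$) yields $\partial_t J(u_t)\le-c_q(\varepsilon/2)^4=:-\delta_\varepsilon<0$ whenever $u_t\in\mathcal A_\varepsilon$.

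Next I would prove a scale independent minimal crossing time: there is $\tau_0=\tau_0(q)>0$ such that whenever $u_t\in\mathcal A_\varepsilon$ on an interval $(s_0,s_1)$ with $u_{s_0}\in\overline{V(q,\varepsilon/2)}$ and some defining constraint of $V(q,\varepsilon)$ reaching the value $\varepsilon$ at $s_1$, then $s_1-s_0\ge\tau_0$. From Definition \ref{def_flow} one reads off, valid on $V(q,\varepsilon)$, the bounds $\vert\dot\lambda_j/\lambda_j\vert\le C_q$, $\vert\lambda_j\dot a_j\vert\le 1$ (hence $\vert\dot a_j\vert\le\lambda_j^{-1}\le\varepsilon$), and $\vert\dot\alpha_j/\alpha_j\vert\le C_q\varepsilon$ (the last via \eqref{b_estimates} and $\Vert v\Vert,\lambda_r^{-2},\varepsilon_{r,s}\le\varepsilon$ there), while $\partial_t\Vert v\Vert^2\le0$ by the $-C_vv$ term, cf. Remark \ref{rem_flow_first_properties}. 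Together with Lemma \ref{lem_interactions}(vii) these give, on $V(q,\varepsilon)$, the differential inequalities $\vert\tfrac d{dt}\lambda_j^{-1}\vert\le C_q\lambda_j^{-1}$, $\vert\tfrac d{dt}\varepsilon_{i,j}\vert\le C_q\varepsilon_{i,j}$ and $\vert\tfrac d{dt}(1-\tfrac{\alpha^2}{\alpha_K^{2n/(n-2)}}K_j\alpha_j^{4/(n-2)})\vert\le C_q\varepsilon$. Since every constraint is $\le\varepsilon/2$ at $s_0$ and the $\Vert v\Vert$ constraint can never attain $\varepsilon$ (being non increasing), the constraint reaching $\varepsilon$ at $s_1$ must at least double, or in the $\alpha$ case grow by $\ge\varepsilon/2$ at rate $\le C_q\varepsilon$, while the flow stays inside $V(q,\varepsilon)$; this forces $s_1-s_0\ge\min(C_q^{-1}\ln 2,(2C_q)^{-1})=:\tau_0$, independent of $\varepsilon$ and of how concentrated the flow already is.

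With these two ingredients the lemma follows. Pick $k$ so large that $\varepsilon_k<\varepsilon/2$ and $J(u_{t_k})-J_\infty<\delta_\varepsilon\tau_0$, and put $T_\varepsilon:=t_k$. If $u_t\notin V(q,\varepsilon)$ for some $t>t_k$, let $s_1$ be the first such time and $s_0\in[t_k,s_1)$ the last time with $u_{s_0}\in\overline{V(q,\varepsilon/2)}$; this is well defined because $u_{t_k}\in V(q,\varepsilon_k)\subset V(q,\varepsilon/2)$ and $\overline{V(q,\varepsilon/2)}\subset V(q,\varepsilon)$ (the defining inequalities being strict and, by Proposition \ref{prop_optimal_choice}, continuous), so $u_{s_1}\notin\overline{V(q,\varepsilon/2)}$ forces $s_0<s_1$ and $u_t\in\mathcal A_\varepsilon$ for $t\in(s_0,s_1)$. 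Then the two steps above and monotonicity of $J$ give
\[
J(u_{t_k})-J_\infty\ \ge\ J(u_{s_0})-J(u_{s_1})\ \ge\ \delta_\varepsilon(s_1-s_0)\ \ge\ \delta_\varepsilon\tau_0,
\]
contradicting the choice of $k$. Hence $u_t\in V(q,\varepsilon)$ for all $t>T_\varepsilon$.

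The delicate point, and the one I would spell out in detail, is the minimal crossing time: one must exclude, for each of the four constraints defining $V(q,\varepsilon)$, a jump from $\le\varepsilon/2$ to $\varepsilon$ in arbitrarily short time no matter how concentrated the flow already is, and this is exactly where the construction of Definition \ref{def_flow} enters — the relative boundedness of $\dot\lambda/\lambda$, the normalisation $\vert\lambda\dot a\vert\le1$ combined with Lemma \ref{lem_interactions}(vii), the smallness of $\dot\alpha/\alpha$ inside $V(q,\varepsilon)$, and the monotonicity $\partial_t\Vert v\Vert^2\le0$ built in via $C_v$. The remaining steps are bookkeeping on top of Proposition \ref{prop_decreasing_the_energy} and the convergence of $J$.
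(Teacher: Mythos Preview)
Your argument follows the same trapping strategy as the paper --- strict energy decay on the annulus $V(q,\varepsilon)\setminus V(q,\varepsilon/2)$ together with a lower bound on the crossing time, giving a contradiction with the finite total energy drop. The difference lies in how the crossing-time bound is obtained: the paper stays in the ambient $W^{1,2}$ metric, using $c_{0,\varepsilon}=d(\partial V(q,\varepsilon),\partial V(q,\varepsilon/2))>0$ together with the uniform speed bound $\vert\partial_t u\vert\le c_{1,\varepsilon}$, whereas you track the constraints one by one via the differential inequalities $\vert\dot\lambda_j/\lambda_j\vert\le C_q$, $\vert d\varepsilon_{i,j}/dt\vert\le C_q\varepsilon_{i,j}$, etc., drawn from Definition~\ref{def_flow}. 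Your route is more explicit and even yields a crossing time $\tau_0$ independent of $\varepsilon$, which is more than what is needed; the paper's is shorter and avoids touching the individual parameters.

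One point needs tightening. You pass freely between ``$u\in\overline{V(q,\varepsilon/2)}$'' and ``every \emph{optimal} constraint of $u$ is $\le\varepsilon/2$'', but these are not the same statement: membership in $V(q,\varepsilon/2)$ only asserts that \emph{some} parameter tuple lies in $A_u(q,\varepsilon/2)$, while Proposition~\ref{prop_optimal_choice} as stated places the minimiser in $A_u(q,\varepsilon_0)$ for a fixed $\varepsilon_0$, not in $A_u(q,\varepsilon/2)$. (The reverse implication you use --- $A_u(q,\varepsilon/2)=\emptyset$ forces at least one optimal constraint to be $\ge\varepsilon/2$ --- is of course fine, so your lower bound on $\mathcal D$ stands.) The simplest repair is to redefine $s_0$ directly as the last time at which every optimal constraint is $\le\varepsilon/2$: applying Proposition~\ref{prop_optimal_choice} with $\varepsilon_0=\varepsilon/4$ shows that for $k$ large the optimal constraints at $t_k$ are $<\varepsilon/4$, so such $s_0\ge t_k$ exists, and the remainder of your argument goes through unchanged. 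The paper's $W^{1,2}$ approach sidesteps this issue altogether.
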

\begin{proof}
If the statement was false, there would exist
$ 
0<t_{1}<t_{1}^{\prime}<t_{2}<t_{2}^{\prime}<\ldots<\infty
$ 
such, that 
\begin{equation*}
u\in V(q,\varepsilon)\setminus V(q,\frac{\varepsilon}{2})
\; \text{ during } \; 
[t_{i},t_{i}^{\prime}]
,\;
u_{t_{i}}\in \partial V(q,\varepsilon)
\; \text{ and } \; 
u_{t_{i}^{\prime}}\in \partial V(q,\frac{\varepsilon}{2})
\end{equation*}
for some arbitrarily small $\varepsilon>0$. 
Thus the flow has during $[t_{i},t_{i}^{\prime}]$ to travel a distance 
\begin{equation*}
c_{0,\varepsilon}=d(\partial V(q,\varepsilon),\partial V(q,\frac{\varepsilon}{2}))>0
\end{equation*}
with bounded speed
$
\vert \partial_{t}u\vert \leq c_{1,\varepsilon}
$
and energy decay
$ %
 \partial_{t}J(u)
\leq - c_{2,\varepsilon}
$ 
due to proposition \ref{prop_decreasing_the_energy}. 
So the time 
 for this travelling is lower bounded, i.e. $\vert t_{i}^{\prime}-t_{i}\vert \geq \frac{c_{0,\varepsilon}}{c_{1,\varepsilon}}$,
 and thus we consume at least a quantity of energy
\begin{equation*}
J(u_{t_{i}})-J(u_{t_{i}^{\prime }})
=
-\int^{t_{i}^{\prime}}_{t_{i}}\partial_{t}J(u)
\geq
 \frac{c_{0,\varepsilon}c_{2,\varepsilon}}{c_{1,\varepsilon}}.
\end{equation*}
Clearly this leads to a contradiction, as  the lower bounded energy is never increased. 
\end{proof}

\section{Non compact flow lines}
 Since every flow line can be considered as a Palais-Smale sequence, when restricted to a sequence in time, every non compact zero weak limit flow line has by Proposition  \ref{blow_up_analysis} to enter every $V(q,\varepsilon)$ and by lemma \ref{lem_capturing_the_flow} to remain therein eventually. 
Let us study such a flow line 
$u=\alpha^{i}\varphi_{a_{i},\lambda_{i}}$,
which then satisfies 
\begin{enumerate}
 \item[($\alpha$)] \quad 
 $
 \frac{\dot\alpha_{j}}{\alpha_{j}} 
 =
 -
 \eta_{\alpha} (1-\eta_{v})
 \Vert \varphi_{j}\Vert^{-2} 
 (
 1
 -
 \frac{\alpha^{2}}{\alpha_{K}^{\frac{2n}{n-2}}}K_{j}\alpha_{j}^{\frac{4}{n-2}}
 )
 +
 b_{\alpha}
 ;
 $
  \item[($a$)] \quad
 $\lambda_{j}\dot a_{j}
 =
(1-\eta_{\alpha})(1-\eta_{v})\eta_{a_{j}}\frac{\nabla K_{j}}{\vert \nabla K_{j}\vert}
;$
 \item[($\lambda$)] \quad 
 $
 \frac{\dot \lambda_{j}}{\lambda_{j}}
 =
 -
 (1-\eta_{\alpha})(1-\eta_{v})
 (
\eta_{\lambda_{j}}^{\leq} \Pi_{i}(1-\eta_{a_{i}})^{m_{\lambda_{j},\lambda_{i}}}\frac{\Delta K_{j}}{\vert \Delta K_{j}\vert}
 +
 \eta_{\lambda_{j}}^{\geq}m_{\lambda_{j}}
 );
 $
 \item[($v$)] \quad 
 $
 \partial_{t}v=b_{v}-C_{v}v
 $
\end{enumerate}
for all times to come.  First note, that from $ (v)$ we have 
$\partial_{t}\Vert v \Vert^{2}\lesssim -\Vert v \Vert^{2}$, while from $(a)$ and $(\lambda)$
\begin{equation*}
\partial_{t} \vert \frac{1}{\lambda_{i}^{2}}\vert^{2}=O(\frac{1}{\lambda_{i}^{4}})
,\; 
\partial_{t}\vert \frac{\nabla K_{i}}{\lambda_{i}}\vert^{2}
=
O(
\frac{\vert \nabla K_{i}\vert^{2}}{\lambda_{i}^{2}}
+
\frac{1}{\lambda_{i}^{4}}
)
\; \text{ and } \; 
\partial_{t} \sum_{i\neq j}\varepsilon_{i,j}^{\frac{n+2}{n}} 
=
O(\sum_{i\neq j}\frac{\vert \nabla K_{i}\vert^{2}}{\lambda_{i}^{2}}+\frac{1}{\lambda_{i}^{4}}
+
\varepsilon_{i,j}^{\frac{n+2}{n}}) 
\end{equation*}
and finally from $(\alpha), (a)$ and \eqref{b_estimates}, cf.  also \eqref{alpha_and_alpha_K},
\begin{equation*}
\begin{split}
\partial_{t}\sum_{i}\vert 1-\frac{\alpha^{2}}{\alpha_{K}^{\frac{2n}{n-2}}}K_{i}\alpha_{i}^{\frac{4}{n-2}}\vert^{2}
= &
O(\sum_{i}\vert 1-\frac{\alpha^{2}}{\alpha_{K}^{\frac{2n}{n-2}}}K_{i}\alpha_{i}^{\frac{4}{n-2}}\vert)\Vert v \Vert
\\
& +
O(\sum_{i\neq j}\frac{\vert \nabla K_{i}\vert^{2}}{\lambda_{i}^{2}}+\frac{1}{\lambda_{i}^{4}}
+
\vert 1-\frac{\alpha^{2}}{\alpha_{K}^{\frac{2n}{n-2}}}K_{i}\alpha_{i}^{\frac{4}{n-2}}\vert^{2}
+
\varepsilon_{i,j}^{\frac{n+2}{n}}). 
\end{split}
\end{equation*}
Recalling the definition of $\eta_{v}$, cf. Definition \ref{def_flow}, we then have for $\kappa_{v}$ sufficiently large 
\begin{equation*}
\partial_{t}\Vert v \Vert^{2}
\ll
\partial_{t}
\sum_{r\neq s}
\frac{\vert \nabla K_{r}\vert^{2}}{\lambda_{r}^{2}}
+
\frac{1}{\lambda_{r}^{4}}
+
\vert 1-\frac{\alpha^{2}}{\alpha_{K}^{\frac{2n}{n-2}}K_{r}\alpha_{r}^{\frac{4}{n-2}}} \vert^{2}
+
\varepsilon_{r,s}^{\frac{n+2}{n}} 
\; \text{ on } \; 
\{\eta_{v}\geq 0\}
\end{equation*}
and may thus assume, that 
 eventually, hence from now on and for all times to come
\begin{equation}\label{v_control}  
\eta_{v}\equiv 0.
\end{equation}
We turn to describing the movement in $a_{i}$ and $\lambda_{i}$. 
Clearly we may assume
 $$
\forall \;i : \lambda_{i}\xrightarrow{t\to \infty}  \infty
 $$
 due to Lemma \ref{lem_capturing_the_flow}
 and so at least for a time sequence 
 $t_{k} \longrightarrow \infty$
\begin{equation*}
\partial_{t}\overline \lambda\lfloor_{t=t_{k}} >0,
\; \text{ where } \; 
\overline \lambda=\max_{i}\lambda_{i}
.
\end{equation*}
Hence  $(\lambda)$ and $\forall\; i \;:\;m_{\overline{\lambda},\lambda_{i}}=1$, cf. Definition \ref{def_flow},  show, 
that necessarily 
\begin{equation*}
\forall\;i : \eta_{a_{i}}<1
\; \text{ at }\; 
t=t_{k}
\; \text{ and hence }\;
\forall\; i\;:\; \frac{\vert \nabla K_{i}\vert}{\lambda_{i}}\leq \frac{\kappa_{a}}{\lambda_{i}^{2}}.
\end{equation*}
Since we assume $K$ to be Morse, we conclude, that at least for a sequence in  time  
\begin{equation*}
\forall \; i\;:\;a_{i}\longrightarrow x_{i}\in \{\vert \nabla K \vert =0\}.
\end{equation*}
On the other hand due to $(a)$ all $a_{i}$ move exclusively along the gradient of $K$, whence necessarily
\begin{equation*}
a_{i}\xrightarrow{t\to \infty} x_{i}\in \{\vert \nabla K \vert =0\}
\end{equation*}
and $a_{i}$ has to move along the stable manifold of $x_{i}$ with respect to the positive gradient flow for $K$, hence
\begin{equation*}
\forall \;i\;:\;
a_{i}\in W_{s}(x_{i})=W_{s}^{\nabla K}(x_{i}).
\end{equation*}
But  the only possibility for $\lambda_{i}$ to increase is $\Delta K_{i}<0$, cf. $(\lambda)$, whence necessarily  as a first consequence 
\begin{equation*}
\forall \; i \;:\; 
W_{s}(x_{i}) \ni a_{i}\xrightarrow{t\to \infty} x_{i}\in \{\vert \nabla K \vert =0\}\cap \{\Delta K<0 \}.
\end{equation*}
In particular we may assume $\Delta K_{i}<0$ from now on. Secondly, since for $\lambda_{j}\geq \lambda_{k}$ 
\begin{equation*}
\eta^{\leq }_{\lambda_{j}}\leq \eta^{\leq}_{\lambda_{k}}
,\; 
\eta^{\geq }_{\lambda_{j}}\geq \eta^{\geq}_{\lambda_{k}}
\; \text{ and }\; \forall i\;:\; 
m_{\lambda_{j},\lambda_{i}}\geq m_{\lambda_{k},\lambda_{i}}
\; \text{ and  }\; 
m_{\lambda_{j}}\geq m_{\lambda_{k}}
,
\end{equation*}
cf. Definition \ref{def_flow}, we derive from $(\lambda)$ using $\frac{\Delta K_{i}}{\vert \Delta K_{i}\vert}=-1$ and $\eta_{v}= 0$
\begin{equation*}
\begin{split}
\frac{\dot \lambda_{j}}{\lambda_{j}}
= &
 (1-\eta_{\alpha})
 (
\eta_{\lambda_{j}}^{\leq} \Pi_{i}(1-\eta_{a_{i}})^{m_{\lambda_{j},\lambda_{i}}}
 -
 \eta_{\lambda_{j}}^{\geq}m_{\lambda_{j}}
 ) 
 \leq  
  (1-\eta_{\alpha})
 (
\eta_{\lambda_{k}}^{\leq} \Pi_{i}(1-\eta_{a_{i}})^{m_{\lambda_{k},\lambda_{i}}}
 -
 \eta_{\lambda_{k}}^{\geq}m_{\lambda_{k}}
 )
 =
 \frac{\dot \lambda_{k}}{\lambda_{k}},
\end{split}
\end{equation*}
whence $\partial_{t}\frac{\lambda_{j}}{\lambda_{k}}\leq 0$
for $\lambda_{j}\geq \lambda_{k}$ and we may  therefore assume from now on 
\begin{equation}\label{lambda_bounds}
1\leq \frac{\overline \lambda}{\underline \lambda}\leq C=C(u_{0})
\; \text{ for }\; \overline \lambda =\max_{i}\lambda_{i}
\;\text{ and }\; 
\underline{\lambda}=\min_{i}\lambda_{i}.
\end{equation}
Thirdly, as we had said,  $\overline{\lambda}$ has to grow at
times $t=t_{k}$, and then we have
\begin{equation}\label{nabla_K_i_smallness_sequential} 
\forall \;i\;:\; \eta_{a_{i}}<1
\;
\text{ and}
\; 
\frac{\vert \nabla K_{i}\vert}{\lambda_{i}}
\leq \frac{\kappa_{a}}{\lambda_{i}^{2}},
 \end{equation}
 whereas generally  there holds 
due to $(a)$ and $(\lambda)$
\begin{equation*}
\begin{split}
\frac{\partial_{t}}{2}(\lambda_{i}^{2}\vert \nabla K_{i}\vert^{2})
= &
\frac{\dot \lambda_{i}}{\lambda_{i}}\lambda_{i}^{2}\vert \nabla K_{i}\vert^{2}
+
\lambda_{i}\vert \nabla K_{i}\vert
\langle {2}K_{i},\frac{\nabla K_{i}}{\vert \nabla K_{i}\vert},\lambda_{i}\dot a_{i}\rangle  
\\
\leq & 
(1-\eta_{\alpha})
[
(1-\eta_{a_{i}})\lambda_{i}^{2}\vert \nabla K_{i}\vert^{2}
+
\eta_{a_{i}}
\lambda_{i}\vert \nabla K_{i}\vert
\langle {2}K_{i},\frac{\nabla K_{i}}{\vert \nabla K_{i}\vert},\frac{\nabla K_{i}}{\vert \nabla K_{i}\vert}\rangle  
],
\end{split}
\end{equation*}
where we used $m_{\lambda_{i},\lambda_{i}}=1$.
Since $K$ is Morse and $a_{i}$ is close to $x_{i}$ and moves along $W_{s}(x_{i})$, we have 
\begin{equation}\label{a_along_stable} 
\langle {2}K_{i},\frac{\nabla K_{i}}{\vert \nabla K_{i}\vert},\frac{\nabla K_{i}}{\vert \nabla K_{i}\vert}\rangle  
<
-\delta\; \text{ for some }\; \delta>0
\end{equation}
and consequently 
\begin{equation}\label{staying_in_A_a}
\begin{split}
\frac{\partial_{t}}{2}(\lambda_{i}^{2}\vert \nabla K_{i}\vert^{2})
\leq & 
(1-\eta_{\alpha})
[
(1-\eta_{a_{i}})\lambda_{i}^{2}\vert \nabla K_{i}\vert^{2}
-
\delta\,\eta_{a_{i}}
\lambda_{i}\vert \nabla K_{j}\vert
].
\end{split}
\end{equation}
In particular \eqref{nabla_K_i_smallness_sequential} and \eqref{staying_in_A_a} imply, that  
we may assume from now on and for all times to come
\begin{equation}\label{nabla_K_control}
\forall\;i\;:\; \eta_{a_{i}}<1-\epsilon 
\; \text{ and }\; 
\frac{d_{g_{0}}(a_{i},x_{i})}{\lambda_{i}}\simeq \frac{\vert \nabla K_{i}\vert}{\lambda_{i}}
<
\frac{\kappa_{a}}{\lambda_{i}^{2}}
\end{equation}
for some fixed $\epsilon>0$. 
From \eqref{lambda_bounds} and \eqref{nabla_K_control} we then may exclude tower bubbling, i.e. 
\begin{equation*}
a_{i}\longrightarrow x_{i}=x_{j}\longleftarrow a_{j}.
\end{equation*}
Indeed in the latter case 
\begin{equation*}
d^{2}_{g_{0}}(a_{i},a_{j})\leq (d_{g_{0}}(a_{i},x_{i})+d_{g_{0}}(a_{j},x_{j})^{2}
\lesssim 
(\lambda_{i}^{-1}+\lambda_{j}^{-1})^{2}
\lesssim 
\lambda_{i}^{-2}+\lambda_{j}^{-2} 
\end{equation*} 
and  $\lambda_{i}\simeq \lambda_{j}$, 
whence 
\begin{equation*}
0\xleftarrow{\,t\to \infty\,} \varepsilon_{i,j}
\simeq 
(\frac{\lambda_{i}}{\lambda_{j}}+\frac{\lambda_{j}}{\lambda_{i}}+\lambda_{i}\lambda_{j}d^{2}_{g_{0}}(a_{i},a_{j}))^{\frac{2-n}{2}}
\simeq 
(\frac{\lambda_{i}}{\lambda_{j}}+\frac{\lambda_{j}}{\lambda_{i}})^{\frac{2-n}{2}}
\hspace{4pt}\not \hspace{-4pt}
\longrightarrow 0 
\; \text{ as} \; 
\varepsilon \longrightarrow 0
,
\end{equation*}
a contradiction.  Hence we may assume $x_{i}\neq x_{j}$, thus 
$d(a_{i},a_{j}) \hspace{4pt}\not \hspace{-4pt}\longrightarrow 0 $ and therefore
\begin{equation}\label{interaction_control}
\forall \; i \;:\; \sum_{r\neq s}
\varepsilon_{r,s}\simeq\sum_{r\neq s} (\lambda_{r}\lambda_{s})^{\frac{2-n}{2}}
\simeq \frac{1}{\lambda_{i}^{n-2}}
\; \text{ and }\; 
\sum_{r\neq s}\varepsilon_{r,s}^{\frac{n+2}{2n}}
=
o(\frac{1}{\lambda_{i}^{2}})
\end{equation}
for all times to come due to $\lambda_{r}\simeq \lambda_{s}\simeq \lambda_{i}$, cf. \eqref{lambda_bounds}. 
In particular $(a)$ and $(\lambda)$ therefore simplifies to 
\begin{enumerate} 
\item[$(a)$]\quad
$
\lambda_{j}\dot a_{j}=(1-\eta_{\alpha})\eta_{a_{j}}\frac{\nabla K_{j}}{\vert \nabla K_{j}\vert};
$
\item[$(\lambda)$]\quad
$
 \frac{\dot \lambda_{j}}{\lambda_{j}}
 =
 (1-\eta_{\alpha})
\Pi_{i}(1-\eta_{a_{i}})^{m_{\lambda_{j},\lambda_{i}}}
\gtrsim 
\epsilon^{q} (1-\eta_{\alpha}) 
$
\end{enumerate} 
due to \eqref{nabla_K_control},  \eqref{interaction_control}  and, cf. Definition \ref{def_flow}, 
$m_{\lambda_{j},\lambda_{i}}\in [0,1]$ and 
\begin{enumerate}[label=(\roman*)]
 \item \quad 
 $ \eta^{\leq}_{\lambda_{j}}=1  $
on 
$ \;\{\sum_{r\neq s}\varepsilon_{r,s}\leq \frac{\kappa_{\lambda}^{-1}}{\lambda_{j}^{2}}\} \;$;
 \item \quad 
$\eta^{\geq}_{\lambda_{j}}=0$ 
on 
$ \;\{\sum_{r\neq s}\varepsilon_{r,s}\leq \frac{\sqrt{\kappa_{\lambda}}}{\lambda_{j}^{2}}\} .$
\end{enumerate} 
We turn our attention to  the movement in  $\alpha_{i}$. Since we may assume by now $\eta_{v}=0$, hence
\begin{equation*}
\Vert v \Vert 
\lesssim
\sum_{r\neq s} \frac{\vert \nabla K_{r}\vert}{\lambda_{r}}+\frac{1}{\lambda_{r}^{2}}
+
\vert 1
-
\frac{\alpha^{2}}{\alpha_{K}^{\frac{2n}{n-2}}}
K_{r}\alpha_{r}^{\frac{4}{n-2}} \vert
+
\varepsilon_{r,s}^{\frac{n+2}{2n}}
\lesssim 
\sum_{r} \frac{1}{\lambda_{r}^{2}}
+
\vert
1
-
\frac{\alpha^{2}}{\alpha_{K}^{\frac{2n}{n-2}}}
K_{r}\alpha_{r}^{\frac{4}{n-2}} \vert,
\end{equation*}
cf. Definition \ref{def_flow}, \eqref{nabla_K_control} and \eqref{interaction_control}, and due to \eqref{b_estimates}, we have
\begin{enumerate}
 \item[$(\alpha)$] \quad
 $
 \frac{\dot \alpha_{j}}{\alpha_{j}}
 =
 -
\frac{ \eta_{\alpha} }{\bar{c_{0}}^{2}} 
 (
 1
 -
 \frac{\alpha^{2}}{\alpha_{K}^{\frac{2n}{n-2}}}K_{j}\alpha_{j}^{\frac{4}{n-2}}
 ) 
+
o(
\sum_{r}
\vert
1
-
\frac{\alpha^{2}}{\alpha_{K}^{\frac{2n}{n-2}}}
K_{r}\alpha_{r}^{\frac{4}{n-2}} \vert 
)
+
O( 
 \sum_{r} \frac{1}{\lambda_{r}^{2}}
 ),
 $
\end{enumerate}
where we made use of $\Vert \varphi_{j}\Vert=\bar c_{0}+O(\frac{1}{\lambda_{j}^{2}})$.
Note, that due to $(a)$ we have   
\begin{equation*}
\partial_{t}(\frac{\alpha^{2}}{\alpha_{K}^{\frac{2n}{n-2}}}K_{j}\alpha_{j}^{\frac{2n}{n-2}}-\alpha_{j}^{2})
=
\sum_{i}\alpha_{i}\partial_{\alpha_{i}}(\frac{\alpha^{2}}{\alpha_{K}^{\frac{2n}{n-2}}})
\frac{\dot \alpha_{i}}{\alpha_{i}}
K_{j}\alpha_{j}^{\frac{2n}{n-2}} 
+
(\frac{\alpha^{2}}{\alpha_{K}^{\frac{2n}{n-2}}}
K_{j}
\alpha_{j}\partial_{\alpha_{j}}\alpha_{j}^{\frac{2n}{n-2}}
-
\alpha_{j}\partial_{\alpha_{j}}\alpha_{j}^{2}
)
\frac{\dot \alpha_{j}}{\alpha_{j}}, 
\end{equation*}
up to some 
$O(\sum_{r}\frac{\vert \nabla K_{r}\vert}{\lambda_{r}})$.  
Recalling \eqref{alpha_and_alpha_K} we then find, 
that  up to some 
\begin{equation*}
o(
\sum_{r}
\vert
1
-
\frac{\alpha^{2}}{\alpha_{K}^{\frac{2n}{n-2}}}
K_{r}\alpha_{r}^{\frac{4}{n-2}} \vert 
)
+
O( 
 \sum_{r} 
 \frac{\vert \nabla K_{r}\vert}{\lambda_{r}}
 +
 \frac{1}{\lambda_{r}^{2}}
 )
\end{equation*}
there holds  
$
 \sum_{i}\alpha_{i}\partial_{\alpha_{i}}\alpha^{2}\frac{\dot \alpha_{i}}{\alpha_{i}}
 =
 0 
 $
and secondly
$$
  \sum_{i}\alpha_{i}\partial_{\alpha_{i}}\alpha_{K}^{\frac{2n}{n-2}}\frac{\dot \alpha_{i}}{\alpha_{i}}
  =
\frac{2n}{n-2}\frac{\eta_{\alpha}}{\bar{c_{0}}^{2}}  \sum_{i}K_{i}\alpha_{i}^{\frac{2n}{n-2}}
(\frac{\alpha^{2}}{\alpha_{K}^{\frac{2n}{n-2}}}K_{i}\alpha_{i}^{\frac{4}{n-2}}-1),
 $$ 
i.e. 
 \begin{equation*}
\begin{split}
\frac{\alpha^{2}}{\alpha_{K}^{\frac{2n}{n-2}}}
  \sum_{i}\alpha_{i}\partial_{\alpha_{i}}\alpha_{K}^{\frac{2n}{n-2}}\frac{\dot \alpha_{i}}{\alpha_{i}}
  = &
  \frac{2n}{n-2}\frac{\eta_{\alpha}}{\bar{c_{0}}^{2}}  \sum_{i}
  (\frac{\alpha^{2}}{\alpha_{K}^{\frac{2n}{n-2}}}K_{i}\alpha_{i}^{\frac{2n}{n-2}}-\alpha_{i}^{2}+\alpha_{i}^{2}) 
(\frac{\alpha^{2}}{\alpha_{K}^{\frac{2n}{n-2}}}K_{i}\alpha_{i}^{\frac{4}{n-2}}-1)
= 0.
\end{split}
 \end{equation*}
Hence we obtain up to the same error
\begin{equation*}
\begin{split}
\partial_{t}(\frac{\alpha^{2}}{\alpha_{K}^{\frac{2n}{n-2}}}K_{j}\alpha_{j}^{\frac{2n}{n-2}}-\alpha_{j}^{2})
= &
\frac{\eta_{\alpha}}{\bar{c_{0}}^{2}}
\left(
\frac{2n}{n-2}\frac{\alpha^{2}}{\alpha_{K}^{\frac{2n}{n-2}}}K_{j}\alpha_{j}^{\frac{2n}{n-2}} 
-
2\alpha_{j}^{2}
\right)
(\frac{\alpha^{2}}{\alpha_{K}^{\frac{2n}{n-2}}}K_{j}\alpha_{j}^{\frac{4}{n-2}}-1) \\
 = &
 \frac{4}{n-2}\frac{\eta_{\alpha}}{\bar{c_{0}}^{2}}
 \frac{\alpha^{2}}{\alpha_{K}^{\frac{2n}{n-2}}}K_{j}\alpha_{j}^{\frac{2n}{n-2}} 
 (\frac{\alpha^{2}}{\alpha_{K}^{\frac{2n}{n-2}}}K_{j}\alpha_{j}^{\frac{4}{n-2}}-1)
 \gtrsim 
 \eta_{\alpha}
  (\frac{\alpha^{2}}{\alpha_{K}^{\frac{2n}{n-2}}}K_{j}\alpha_{j}^{\frac{2n}{n-2}}-\alpha_{j}^{2}).
\end{split}
\end{equation*}
Consequently we have
\begin{equation}\label{alpha_growth}
\partial _{t}\sum_{i}\vert \frac{\alpha^{2}}{\alpha_{K}^{\frac{2n}{n-2}}}K_{j}\alpha_{j}^{\frac{2n}{n-2}}-\alpha_{j}^{2}\vert^{2} \geq 0
\; \text{ and  }\; 
\partial_{t}\frac{\vert \nabla K_{r}\vert^{2}}{\lambda_{r}^{2}},
\partial_{t}\frac{1}{\lambda_{r}^{4}}\leq   0
\end{equation}
as long as $u\in \{\eta_{\alpha}>\frac{1}{4}\}$ due to $(\lambda)$ 
and $(a)$, cf. \eqref{a_along_stable}. Thence necessarily 
\begin{equation}\label{alpha_control} 
\eta_{\alpha}<\frac{1}{2}
\end{equation}
for all times to come. Indeed we may assume 
\begin{enumerate}[label=(\roman*)]
 \item \quad 
 $\{\eta_{\alpha}>\frac{1}{4}\}
 =
 \{
\sum_{j}\vert 1-\frac{\alpha^{2}}{\alpha_{K}^{\frac{2n}{n-2}}}K_{j}\alpha_{j}^{\frac{4}{n-2}}\vert 
> \tilde \kappa_{\alpha,\frac{1}{4}}
(
\sum_{r\neq s}\frac{\vert \nabla K_{r}\vert}{\lambda_{r}}+\frac{1}{\lambda_{r}^{2}}
+
\varepsilon_{r,s}^{\frac{n+2}{2n}}
)
\}$
\item  \quad 
$
\{\eta_{\alpha}>\frac{1}{2}\}
 =
 \{
\sum_{j}\vert 1-\frac{\alpha^{2}}{\alpha_{K}^{\frac{2n}{n-2}}}K_{j}\alpha_{j}^{\frac{4}{n-2}}\vert 
> \tilde \kappa_{\alpha,\frac{1}{2}}
(
\sum_{r\neq s}\frac{\vert \nabla K_{r}\vert}{\lambda_{r}}+\frac{1}{\lambda_{r}^{2}}
+
\varepsilon_{r,s}^{\frac{n+2}{2n}}
)
\}
$
\end{enumerate}
with 
\begin{equation*}
\kappa_{\alpha\frac{1}{4}},\kappa_{\alpha,\frac{1}{2}}\in (\sqrt{\kappa_{\alpha}},\kappa_{\alpha})
\; \text{ and } \; 
\frac{\kappa_{\alpha,\frac{1}{2}}}{\kappa_{\alpha,\frac{1}{4}}}\longrightarrow \infty
\; \text{ as }\; 
\kappa_{\alpha}\longrightarrow \infty.
\end{equation*}  
We then find from \eqref{alpha_growth}, 
 since $\delta<\alpha_{i}<\delta^{-1}$ on $V(q,\varepsilon)\cap [\Vert \cdot \Vert=1]$,
that
\begin{equation*}
\begin{split}
\exists \; c_{\delta}>0 \;  
\forall\;  u_{0}\in \{\eta_{\alpha}>\frac{1}{2}\}\;:\; 
\sum_{j}\vert 1-\frac{\alpha^{2}}{\alpha_{K}^{\frac{2n}{n-2}}}K_{j}\alpha_{j}^{\frac{4}{n-2}}\vert 
\geq
c_{\delta} \,\kappa_{\alpha,\frac{1}{2}}
(\sum_{r}\frac{\vert \nabla K_{r}\vert}{\lambda_{r}}+\frac{1}{\lambda_{r}^{2}})   
\end{split}   
\end{equation*} 
as long as 
$u\in \{\eta_{\alpha}>\frac{1}{4}\}$, and thus by virtue of \eqref{interaction_control}
\begin{equation*}
\begin{split}
\sum_{j}\vert 1-\frac{\alpha^{2}}{\alpha_{K}^{\frac{2n}{n-2}}}K_{j}\alpha_{j}^{\frac{4}{n-2}}\vert 
\geq
\frac{c_{\delta} }{2}\kappa_{\alpha,\frac{1}{2}}
(\sum_{r\neq s}\frac{\vert \nabla K_{r}\vert}{\lambda_{r}}+\frac{1}{\lambda_{r}^{2}}
+
\varepsilon_{r,s}^{\frac{n+2}{2n}})  . 
\end{split}   
\end{equation*}
Consequently and, since we may  assume 
$$\frac{c_{\delta} }{2}\kappa_{\alpha,\frac{1}{2}} >
\kappa_{\alpha,\frac{1}{4}},$$
provided $\kappa_{\alpha}$ is sufficiently large, we will stay in $\{\eta_{\alpha}>\frac{1}{4}\}$
for all times, whence by virtue of \eqref{alpha_growth}   
\begin{equation*}
\sum_{i}\vert \frac{\alpha^{2}}{\alpha_{K}^{\frac{2n}{n-2}}}K_{j}\alpha_{j}^{\frac{4n}{n-2}}-1\vert
\simeq 
\sqrt{
\sum_{i}\vert \frac{\alpha^{2}}{\alpha_{K}^{\frac{2n}{n-2}}}K_{j}\alpha_{j}^{\frac{2n}{n-2}}-\alpha_{j}^{2}\vert^{2}
}\hspace{4pt} \not  \hspace{-4pt} \longrightarrow 0
\; \text{ as} \; t  \longrightarrow \infty, 
\end{equation*}
a contradiction. We thus conclude from \eqref{lambda_bounds}, $(\lambda)$, \eqref{nabla_K_control}, \eqref{alpha_control} and 
\eqref{v_control}, that eventually
\begin{equation*}
\forall\; i \; : \; 
\lambda_{i}\simeq \lambda_{j}
,\; 
e^{-Ct}
\leq 
\frac{1}{\lambda_{i}},d_{g_{0}}(a_{i},x_{i}),
\vert 1 - \frac{\alpha^{2}}{\alpha_{K}^{\frac{2n}{n-2}}}K_{i}\alpha_{i}^{\frac{2n}{n-2}}\vert,
\Vert v \Vert
\leq 
e^{-ct}
\end{equation*}
for some $0<c<C<\infty$.  We have thus derived the non trivial part of 

\begin{proposition} 
\label{prop_tower_bubble_exclusion}
A zero weak limit  flow line is non compact, if and only if eventually 
\begin{enumerate}[label=(\roman*)]
 \item \quad $\Vert v \Vert $ decays exponentially and
 \item \quad $\vert \alpha^{2}K_{i}\alpha_{i}^{\frac{4}{n-2}}-\alpha_{K}^{\frac{2n}{n-2}}\vert$ decays exponentially and
 \item \quad $\lambda_{i}$ increases exponentially and
 \item \quad $a_{i}\longrightarrow x_{i}\in \{\vert \nabla K \vert =0\}\cap\{ \nabla K<0\}$ along $W_{s}(x_{i})$ exponentially fast, where $x_{i}\neq x_{j}$
\end{enumerate}
for all $i\neq j=1,\ldots,q$. Conversely flow lines satisfying $(i)$-$(iv)$ exist. 
\end{proposition}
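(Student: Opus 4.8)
The plan is to settle the equivalence quickly and then exhibit, for each prescribed set of peaks, an explicit flow line realising (i)--(iv). The implication ``non-compact of zero weak limit type $\Rightarrow$ eventually (i)--(iv)'' is precisely the analysis carried out just above the statement: restricting such a flow line to a time sequence turns it into a Palais--Smale sequence for $J$, so by Proposition~\ref{blow_up_analysis} it enters some $V(q,\varepsilon_k)$ with $\varepsilon_k\to0$, by Lemma~\ref{lem_capturing_the_flow} it is eventually trapped in every $V(q,\varepsilon)$, and the flow equations of Definition~\ref{def_flow} together with Proposition~\ref{prop_decreasing_the_energy} then force (i)--(iv). The converse is immediate: (iii) gives $\lambda_i\to\infty$, so by Lemma~\ref{lem_interactions}(i) the mass of $u_t$ concentrates at the bubbles, hence $u_t\rightharpoonup0$, while $\|u_t\|\equiv1$ excludes strong subconvergence; thus any flow line obeying (i)--(iv) is non-compact of zero weak limit type. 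It remains to prove the existence clause.

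For existence I would take, given distinct $x_1,\dots,x_q\in\{|\nabla K|=0\}\cap\{\Delta K<0\}$ and $\Lambda_0\gg1$,
\[
u_0=\sum_i\alpha_i\,\varphi_{x_i,\Lambda_0},\qquad \alpha_i=c\,K(x_i)^{-\frac{n-2}{4}},\qquad \|u_0\|=1,
\]
with $c>0$ fixing the norm. Then $K(x_i)\alpha_i^{4/(n-2)}=c^{4/(n-2)}$, and since $\alpha_K^{2n/(n-2)}=c^{2n/(n-2)}\sum_jK(x_j)^{\frac{2-n}{2}}$ while $\alpha^2=c^2\sum_jK(x_j)^{\frac{2-n}{2}}$, also $\alpha_K^{2n/(n-2)}/\alpha^2=c^{4/(n-2)}$; hence every balance quantity $b_i:=1-\frac{\alpha^2}{\alpha_K^{2n/(n-2)}}K(x_i)\alpha_i^{4/(n-2)}$ vanishes. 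For $\Lambda_0$ large, $u_0\in V(q,\varepsilon)$ with optimal representation (Proposition~\ref{prop_optimal_choice}) $a_i=x_i$, $\lambda_i=\Lambda_0$, $v=0$, so $\eta_v=0$, $\eta_\alpha=0$, $\eta_{a_i}=0$ (because $\nabla K(x_i)=0$), and $\varepsilon_{i,j}\simeq\bigl(\Lambda_0^2G_{g_0}^{2/(2-n)}(x_i,x_j)\bigr)^{(2-n)/2}$ so small -- here $n\ge5$ is used -- that $\eta^{\le}_{\lambda_i}=1$ and $\eta^{\ge}_{\lambda_i}=0$.

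The flow $u=\Phi(\cdot,u_0)$ keeps this configuration. As $\nabla K(a_i)=\nabla K(x_i)=0$, the $(a)$-equation of Definition~\ref{def_flow} is stationary, so $a_i\equiv x_i$. As $b_v$ vanishes with $v$, the $(v)$-equation preserves $v\equiv0$. The $b_i$ are invariant under a common rescaling $\alpha_j\mapsto\mu\alpha_j$ of all coordinates -- the total exponent of $\mu$ is $2-\tfrac{2n}{n-2}+\tfrac{4}{n-2}=0$ -- and when all $b_i=0$ the $(\alpha)$-equation becomes $\dot\alpha_j/\alpha_j=b_\alpha$, the \emph{same} scalar for every $j$ (the designed part $\beta_{\alpha_j}$ vanishes and $b_\alpha$ is the common, norm-preserving correction of Lemma~\ref{lem_shadow_flow}), i.e. precisely such a rescaling; hence $b_i\equiv0$, $\eta_\alpha\equiv0$, and each $\alpha_j$ converges inside the admissible range. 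Consequently the $(\lambda)$-equation reads $\dot\lambda_i/\lambda_i=-\Delta K(x_i)/|\Delta K(x_i)|=1$, so $\lambda_i=\Lambda_0e^t$ and $\lambda_i\simeq\lambda_j$. All quantities controlling membership in $V(q,\varepsilon)$ -- $\lambda_i^{-1}$, $\varepsilon_{i,j}$, $|b_i|$, $\|v\|$ -- only decrease along this flow, so $u(t)\in V(q,\varepsilon)$ for all $t\ge0$; the flow line concentrates at the $x_i$ and satisfies (i) ($v\equiv0$), (ii) ($|\alpha^2K_i\alpha_i^{4/(n-2)}-\alpha_K^{2n/(n-2)}|=\alpha_K^{2n/(n-2)}|b_i|\equiv0$), (iii) ($\lambda_i=\Lambda_0e^t$) and (iv) ($a_i\equiv x_i\in W_s(x_i)$, $x_i\neq x_j$), trivially at exponential rate. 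This yields the required flow line for every choice of $\{x_1,\dots,x_q\}$.

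The substantial work is not this soft existence argument but the ``only if'' direction established above, and inside it the control of $\eta_\alpha$: by \eqref{alpha_growth} the balanced locus $b_1=\dots=b_q=0$ is a \emph{repeller} of the $\alpha$-subdynamics ($\partial_t\sum_i|b_i|^2\gtrsim\eta_\alpha\sum_i|b_i|^2$ where $\eta_\alpha>\tfrac14$), so this cannot be handled by a pointwise Gronwall estimate; instead one uses Proposition~\ref{prop_decreasing_the_energy} together with $J\ge c_K$ -- a non-decaying $\sum_i|b_i|^2$ would drain the energy below its lower bound -- as in the argument around \eqref{alpha_control}. The exclusion of tower bubbles similarly rests on the interaction terms of Lemma~\ref{lem_lambda_derivatives_at_infinity}, cf. \eqref{lambda_bounds}--\eqref{interaction_control}. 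Those ingredients are needed for the generic non-compact zero-weak-limit flow lines, whose centres only tend to the $x_i$; for the bare existence statement the explicit configuration above suffices.
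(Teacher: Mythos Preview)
Your proposal is correct and follows essentially the same approach as the paper: you take the same explicit initial data $v=0$, $a_i=x_i$, $\alpha_i\propto K(x_i)^{-(n-2)/4}$ (the paper phrases this as imposing the balance condition $\frac{\alpha^2}{\alpha_K^{2n/(n-2)}}K_i\alpha_i^{4/(n-2)}=1$ without writing the solution), and then verify that each piece of the flow equations preserves this configuration and forces $\dot\lambda_i/\lambda_i=1$. Your closing commentary on the $\eta_\alpha$ step describes an energy-drain contradiction via Proposition~\ref{prop_decreasing_the_energy}, whereas the paper actually obtains the contradiction from the eventual trapping in every $V(q,\varepsilon)$ (forcing $\sum_i|b_i|\to0$); both arguments are valid and the difference is cosmetic.
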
 
 
\begin{proof}
By what we have seen above, every non compact zero weak limit flow line has to satisfy 
$(i)$-$(iv)$ above eventually and clearly every flow line satisfying $(i)$-$(iv)$ is non compact with zero weak limit. 
Hence we are left with showing their existence.  
Let us choose for simplicity as initial data 
\begin{equation*}
v=0,\; a_{i}=x_{i} \; \text{ and }\; 
\frac{\alpha^{2}}{\alpha_{K}^{\frac{2n}{n-2}}}K_{i}\alpha_{i}^{\frac{4}{n-2}}=1
\end{equation*}
for $x_{i}\in \{\vert \nabla K \vert =0\}\cap \{\Delta K<0\}$ and $x_{i}\neq x_{j}$ for $i\neq j$. 
Recalling $b_{v}=O(\Vert v\Vert)$, cf. Lemma \ref{lem_shadow_flow}, we have
\begin{equation*}
\partial \Vert v \Vert^{2}=-C_{v}\Vert v \Vert^{2}+O(\Vert v \Vert^{2}), \; C_{v} \gg 1
\end{equation*}
 cf. $(v)$ and hence $v=0$ is preserved. Secondly due to $(a)$ also $a_{i}=x_{i}$ is preserved. Thirdly 
 \begin{equation*}
\partial_{t}(\frac{\alpha^{2}}{\alpha_{K}^{\frac{2n}{n-2}}}K_{i}\alpha_{i}^{\frac{4}{n-2}})=0
 \end{equation*}
follows from $\frac{\dot \alpha_{j}}{\alpha_{j}}=b_{\alpha}$ and $\dot a_{i}=0$, cf. $(\alpha)$, hence also
$\frac{\alpha^{2}}{\alpha_{K}^{\frac{2n}{n-2}}}K_{i}\alpha_{i}^{\frac{4}{n-2}}=1$ is preserved. 
In particular 
\begin{equation*}
\eta_{v},\eta_{\alpha},\eta_{\alpha_{i}}=0
\end{equation*}
are preserved, as long as we do not leave some $V(q,\varepsilon)$, upon which 
$(\alpha),(a),(\lambda)$ and $(v)$ are valid. Thus
\begin{equation*}
 \frac{\dot \lambda_{j}}{\lambda_{j}}
 =
 (
\eta_{\lambda_{j}}^{\leq} \Pi_{i}
 -
 \eta_{\lambda_{j}}^{\geq}m_{\lambda_{j}}
 )
\end{equation*}
due to $(\lambda)$. Since $a_{i}=x_{i}$ and $x_{i}\neq x_{j}$ for $i\neq j$ by assumption, there holds for
$\underline \lambda=\min_{i}\lambda_{i}$
\begin{equation*}
\sum_{r\neq s}\varepsilon_{r,s}
\lesssim (\frac{1}{\lambda_{r}\lambda_{s}})^{\frac{n-2}{2}}\lesssim \frac{1}{\underline{\lambda}^{n-2}}
=
o(\frac{1}{\underline{\lambda}^{2}}), 
\end{equation*}
whence $\eta^{\leq}_{\underline{\lambda}}=1$ and $\eta^{\geq}_{\underline{\lambda}}=0$, 
so $\partial_{t}\underline \lambda =1$.  Hence the above $V(q,\varepsilon)$ will never be left and 
$\underline \lambda \nearrow \infty$.  
\end{proof}

 \begin{proof}[Proof of Theorem \ref{thm}] 
 Consider the flow introduced in Lemma \ref{easy_properties}, i.e. 
  \begin{equation*}
\Phi: \R_{\geq 0} \times X\longrightarrow X
\; \text{ with } \; 
X=\{0\leq u \in W^{1,2}(M) \mid u\neq 0,\; \Vert u \Vert=1\}, 
\end{equation*}
which decreases the energy $J$ according  to Proposition \ref{prop_decreasing_the_energy}.  
Then by energy reasoning every flow line
$u(t)=\Phi(t,\cdot)$ 
induces upon choice of a subsequence in time a Palais-Smale  sequence 
$u_{t_{k}}=\Phi(t_{k}, \cdot)$ 
. 
Indeed $J$ is by positivity of the Yamabe invariant strictly positive, while by virtue of Propositions
\ref{prop_gradient_bounds} and \ref{prop_decreasing_the_energy} every flow line consumes energy as long as 
$\vert \partial J\vert>0$. Then Proposition \ref{blow_up_analysis} and the comment following it show, that
upon a subsequence
$u_{t_{k}}$ is of zero weak limit, if and only if $u_{t_{k}}$ concentrates in the sense
\begin{equation*}
\forall\; \varepsilon>0\;\exists \;N\in \N \; \forall \; k\geq N\; : \;
u_{t_{k}}\in V(q,\varepsilon),
\end{equation*}
in which case $u$ is of zero weak limit itself, as Lemma \ref{lem_capturing_the_flow} shows. Hence  according to  Proposition \ref{prop_tower_bubble_exclusion} the full flow line concentrates 
simply  with limiting profile and energy
\begin{equation}\label{limiting_profile}
\sum_{i}\frac{c}{K_{i}^{\frac{n-2}{4}}}\delta_{x_{i}}
\; \text{ and }\; 
\lim_{t\to \infty}J(u(t))=J_{x_{1},\ldots,x_{q}}
=
c(\sum_{i}K_{i}^{\frac{2-n}{2}})^{\frac{2}{n}}
\end{equation}
respectively, where $ c>0 $ is a dimensional constant and  
$$x_{i},\ldots,x_{q}\in \{ \vert \nabla K\vert=0 \}\cap \{ \Delta K<0 \}$$
are distinct.
Hence zero weak limit sequences along a flow line are classified with respect to their end configuration, which corresponds one to one to subsets of $
\{\vert \nabla K \vert =0\}\cap \{\Delta K<0\}$ on the one hand and
to finite energy and zero weak limit subcritical blow-up solutions on the other, cf. \cite{MM2}.
And of course flow lines of the latter type do exist by Proposition \ref{prop_tower_bubble_exclusion}. 

\

So let us consider 
$$x_{i},\ldots,x_{q}\in \{\vert \nabla K \vert =0\} \cap \{\Delta K<0\}
\; \text{ with } \;  x_{i}\neq x_{j}$$ 
and denote correspondingly by
\begin{equation*}
 u_{\tau,x_{1},\ldots,x_{q}}\in \{\partial J_{\tau}=0\}
\end{equation*}
the unique,   zero weak limit subcritical blow-up solution from \cite{MM2} of  the same limiting 
profile and energy  
\begin{equation*}
\sum_{i}\frac{c}{K_{i}^{\frac{n-2}{4}}}\delta_{x_{i}}
\; \text{ and }\; 
\lim_{\tau \to 0}J(u_{\tau,x_{1,\ldots,x_{q}}})=J_{x_{1},\ldots,x_{q}}.
\end{equation*} 
Then by virtue of Proposition 3.1 in \cite{MM2} there exists 
$\varepsilon>0$ such, that for all
for $0 <\tau \ll \varepsilon$
\begin{equation*}
\{ u_{\tau,x_{1},\ldots,x_{q}}\}
=
\{\partial J_{\tau}=0\} 
\cap
V(q,\varepsilon)
\cap 
\{d(a_{i},x_{i})\ll 1\}, 
\end{equation*}
i.e. uniqueness as a solution on some $V(q,\varepsilon)
\cap 
\{d(a_{i},x_{i})\ll 1\}$. 
\begin{figure}[h!]
\begin{center}
\includegraphics[scale=0.8]{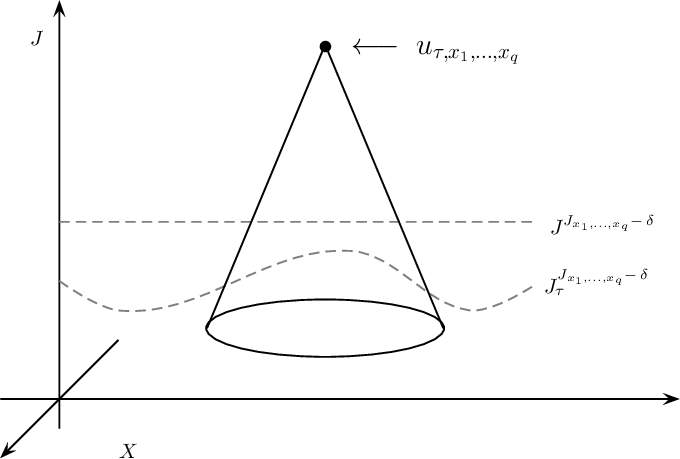}
 \caption{Attaching a cell suspended at a subcritical solution}
\label{Fig_suspension}
 \end{center}
\end{figure}
Hence 
\begin{equation*}
\{\partial J_{\tau}=0\} 
\cap
V(q,\varepsilon)
\cap 
\{d(a_{i},x_{i})\ll 1\}
\end{equation*}
for some $\varepsilon>0$ and any $0<\tau \ll \varepsilon $
contains exactly one element as a subcritical solution
with
\begin{equation*}
m_{x_{1},\ldots,x_{m}}=m(J_{\tau},u_{\tau,x_{1},\ldots,x_{q}})  
=
q-1+\sum^{q}_{i=1}(n-m(K,x_{i})) 
\end{equation*}
as Morse index. 
And we have a homotopy equivalence by attaching a cell, cf. Figure \ref{Fig_suspension},  
\begin{equation*}
J_{\tau}^{J_{x_{1},\ldots,x_{q}}- \,\delta} 
\cup 
(\;
V(q,\varepsilon) \cap \{\vert a_i -x_i \vert \leq \varepsilon\} 
\;)
\simeq 
J_{\tau}^{J_{x_{1},\ldots,x_{q}}- \,\delta} \; \sharp \;C,
\quad
\dim C=m_{x_{1},\ldots,x_{q}}
\end{equation*}
along the unstable manifold of and suspended at $u_{\tau,x_{1},\ldots,x_{q}}$ with energy
$$
J_{\tau}(u_{\tau,x_{1},\ldots,x_{q}})
=
J_{x_{1},\ldots,x_{q}}
+
o_{\frac{1}{\tau}}(1).
$$
Since  
$
J_{\tau}^{s}=\{J_{\tau}\leq s\} \subset \{J\leq s\}=J^{s}
$
due to H\"older's inequality, we then find 
\begin{equation*}
\begin{split}
H_{k} &
\left(  
J^{J_{x_{1},\ldots,x_{q}}- \,\delta}  
\cup  
\left( \;
V(q,\varepsilon)
\cap 
\{d(a_{i},x_{i})\ll 1\}
\; \right) 
,
J^{J_{x_{1},\ldots,x_{q}}- \,\delta}
\right)   
\\
= &
H_{k}
\left(
J^{J_{x_{1},\ldots,x_{q}}- \,\delta} 
\cup
(\;J_{\tau}^{J_{x_{1},\ldots,x_{q}}- \,\delta}  
\cup 
( \;
V(q,\varepsilon)
\cap 
\{d(a_{i},x_{i})\ll 1\}
\; )
 )
,
J^{J_{x_{1},\ldots,x_{q}}- \,\delta}
\right) \\
= &
H_{k}
\left(
J^{J_{x_{1},\ldots,x_{q}}- \,\delta} 
\cup
(
J_{\tau}^{J_{x_{1},\ldots,x_{q}}- \,\delta} \; \sharp \;C
)
,
J^{J_{x_{1},\ldots,x_{q}}- \,\delta}
\right) \\
= &
H_{k}
\left(
J^{J_{x_{1},\ldots,x_{q}}- \,\delta} 
\; \sharp \;C
,
J^{J_{x_{1},\ldots,x_{q}}- \,\delta}
\right)
=
\delta_{k,m_{x_1,\ldots,x_q}}
\end{split}
\end{equation*}
for the $k$-th relative singular homology 
$ H_{k}(A,B) $ with $ k=m_{x_{1},\ldots,x_{q}} $ 
of the pair 
\begin{equation*}
\begin{split}
J^{J_{x_{1},\ldots,x_{q}}- \,\delta}
=
B
\subset
A=
J^{J_{x_{1},\ldots,x_{q}}- \,\delta}   \cup 
(\;V(q,\varepsilon) \cap \{\vert a_i-x_i\} \leq \varepsilon\}),
\end{split}
\end{equation*}
see \cite{Chang-book}. 
Thus we observe  a change of topology of the sublevel sets of $J$ on
\begin{equation}\label{neigbourhood_of_cp_at_infty}
V(q,\varepsilon)
\cap 
\{d(a_{i},x_{i})\ll 1\},    
\end{equation}
while thanks to Proposition \ref{prop_gradient_bounds} we know, that for $\varepsilon>0$ sufficiently small
$
\{\partial J=0\} \cap V(q,\varepsilon) = \emptyset.
$

\

In other words this change of topology happens on \eqref{neigbourhood_of_cp_at_infty} as a \textit{neighbourhood} of the limiting profile \eqref{limiting_profile} of a non compact, energy decreasing, zero weak limit flow line of 
$\Phi$. And in fact this limiting profile  does correspond to a
 \textit{critical point at infinity}, as $J$ exhibits a correct Morse structure on this neighbourhood, cf. Proposition \ref{morse_lemma} and Lemma \ref{non_degenerate_implies_strongly_critical_2}. Hence we may justly  
\begin{enumerate}[label=(\roman*)]
\item say, that this change of topology is induced by this critical point at infinity
\item associate to this critical point at infinity the index 
\begin{equation}\label{index_definition}
\begin{split}
\ind(J,u_{\infty,x_{1},\ldots,x_{q}})
= &
m_{x_{1},\ldots,x_{q}} 
=
m(J_{\tau},u_{\tau,x_{1},\ldots,x_{q}})
=
(q-1)
+
\sum^{q}_{i=1}(n-m(K,x_{i})).
\end{split}
 \end{equation}
\end{enumerate}
This completes the proof. 
\end{proof}  

To establish the Morse structure at infinity, we first require a further orthogonalization.
\begin{lemma}\label{bar_v}
For every  $ \alpha^{i}\varphi_{i}\in V(q,\varepsilon)\cap \{ \vert a_{i}-x_{i} \vert\leq \varepsilon \} $ there exists a unique minimizer for
$ \bar v \in H$ 
\begin{equation*}
\begin{split}
J(\alpha^{i}\varphi_{i}+\bar v)
= &
\min_{v\in H_{\alpha^{i}\varphi_{i}}}
J(\alpha^{i}\varphi_{i}+v),
\end{split}
\end{equation*}
provided $\varepsilon >0$ is sufficiently small, 
and, if $x_{i}\neq x_{j}$ for $i\neq j$, there holds
$
\Vert \bar v \Vert
=
O(
\sum_{r}\frac{\vert \nabla K_{r}\vert}{\lambda_{r}}
+
\sum_{r}\frac{1}{\lambda_{r}^{2}}).
$
\end{lemma}
\begin{proof}
Existence follows from  uniform positivity of $ \partial^{2}J $ on $ H $, cf. \cite{Rey}.  Moreover by Lemma 4.1 in \cite{MM1}
\begin{equation}\label{gradient_on_bar_v}
\partial J_{\tau}(\alpha^{i}\varphi_{i})\bar v
= 
O(
[
\sum_{r}\frac{\vert \nabla K_{r}\vert}{\lambda_{r}}
+
\sum_{r}\frac{1}{\lambda_{r}^{2}}
]
\Vert \bar v\Vert 
),  
\end{equation}
since $ \tau,\theta=0 $, $ n\geq 5 $ and the blow-up points $ a_{i} $ are far from each other, cf. \eqref{eq:eij}. Expanding
\begin{equation*}
\begin{split}
0
= &
\partial J(\alpha^{i}\varphi_{i}+\bar v)\bar v
=
\partial J(\alpha^{i}\varphi_{i})\bar v
+
\partial^{2}J(\alpha^{i}\varphi_{i})\bar v^{2}+o(\Vert \bar v \Vert^{2}),
\end{split}
\end{equation*}
the claimed estimate follows by positivity of the second variation on $ H $  and absorption. 
\end{proof}
Clearly $ \bar v=\bar v_{\alpha_{i}, a_{i},\lambda_{i}} \in H_{\alpha^{i}\varphi}$ and we may represent
every $ u\in V(q,\varepsilon)\cap \{ \vert a_{i}-x_{i} \vert \leq \varepsilon \}  $ uniquely as
\begin{equation*}
\begin{split}
u
= &
\alpha^{i}\varphi_{i}+\bar v + \tilde v.
\end{split}
\end{equation*}
Then by construction
\begin{equation*}
\begin{split}
J(u)
= &
J(\alpha^{i}\varphi_{i}+\bar v +\tilde v)
=
J(\alpha^{i}\varphi_{i}+\bar v)
+
\partial^{2}J(\alpha^{i}\varphi_{i}+\bar v)\tilde v^{2}
+
o(\Vert \tilde v \Vert^{2})
\end{split}
\end{equation*}
and by positivity of $ \partial^{2} J $ on $ H $ and smallness of $ \bar v $ we find 
\begin{equation}\label{tilde_v_positive}
\begin{split}
J(u)
= &
J(\alpha^{i}\varphi_{i}+\bar v)
+
O^{+}(\Vert \tilde v \Vert^{2})
\; \text{ with }\;
0<c\leq O^{+}(1)\leq C<\infty,
\end{split}
\end{equation} 
which is to say, that the $ \tilde v $-direction is a positive, i.e. energy increasing one.  Moreover
\begin{equation}\label{bar_v_negligible}
\begin{split}
J(\alpha^{i}\varphi_{i}+\bar v)
= &
J(\alpha^{i}\varphi_{i})
+
o_{\varepsilon}(\sum_{r}\frac{1}{\lambda_{r}^{2}}),
\end{split}
\end{equation}
as follows by expansion using Lemma \ref{bar_v}  and \eqref{gradient_on_bar_v}. 
\begin{proposition}\label{morse_lemma}
For $x_{i}\neq x_{j}$ for $i\neq j$ and $\varepsilon >0$ sufficiently small there holds 
\begin{equation*}
\begin{split} 
J(u)
=
\hat c_{0}
(\sum_{i} \frac{1}{K^{\frac{n-2}{2}}(x_{i})})^{\frac{2}{n}}
\biggr(
1
& -
\sum_{i=1}^{q-1} (1+o_{\varepsilon}(1))\tilde \alpha_i^2 \\
& -
\sum_{i=1}^{q}(1+o_{\varepsilon}(1))
(\sum_{j}\vert \tilde a_{i,j}^{+} \vert^{2}-\sum_{j}\vert \tilde a_{i,j}^{-} \vert^{2})
+
\sum_{i=1}^{q}\frac{1}{\lambda_{i}^{2}} 
\biggr) 
+
O^{+}(\Vert \tilde v \Vert^{2}),
\end{split}
\end{equation*}
for
$u \in  V(q,\varepsilon)\cap \{ \vert a_{i}-x_{i} \vert\leq \varepsilon \}$,
where upon rescaling
\begin{enumerate}[label=(\roman*)]
\item $ \tilde \lambda_i=\lambda_i $
\item $ \tilde a_{i,j}^{+},\tilde a_{i,j}^{-} $ are local coordinates of $ a_{i} $ in a Morse chart around $ x_{i} $, upon which
\begin{equation*}
\begin{split}
K(a_{i})
= &
K(x_{i})+\sum_{j}\tilde a_{i,j}^{+}-\sum_{j}\tilde a_{i,j}^{-}
\end{split}
\end{equation*}
\item $ \tilde \alpha_i $ are the eigenvectors negative eigenvalues of
$$
\mathcal{A}_{i,j}
=
-\frac{4}{n-2}
(
\delta_{i, j}
-
\frac{K^{\frac{2-n}{4}}(x_i)K^{\frac{2-n}{4}}(x_i)}{\sum_{r}K^{\frac{2-n}{2}}(x_i)}
)
$$
\end{enumerate}
\end{proposition}
\begin{remark}
At this point, cf. \eqref{index_definition}, it is hardly surprising, that for the number of negative directions
$$
(q-1)-\sharp(\tilde a_{i,j}^{+})
=
(q-1)-\sum_{i=1}^{1}\text{coindex}K(x_{i})
=
\ind(J,u_{\infty,x_{1},\ldots,x_{q}}).
$$
\end{remark}
\begin{proof}[Proof of Proposition \ref{morse_lemma}] 
We clearly have to study $ J $ at $\alpha^{i}\varphi_{i}$ only. From Proposition 5.1 in \cite{MM1} we have
\begin{equation*}
J(\alpha^{i}\varphi_{i})
=
\frac
{\hat c_{0}\sum_{i} \alpha_{i}^{2}}
{(\sum_{i} K_{i}\alpha_{i}^{\frac{2n}{n-2}})^{\frac{n-2}{n}}}
\left(
1
-
\hat c_{2}\sum_{i}\frac{\Delta K_{i}}{K_{i}\lambda_{i}^{2}} \frac{\alpha_{i}^{2} }{\sum_{j}\alpha_{j}^{2}} 
\right) 
+ 
o_{\varepsilon}(\sum_{r}\frac{1}{\lambda_{r}^{2}})
\end{equation*}
with positive constants $\hat c_{0},\hat c_{2}$, 
noting, that 
\begin{enumerate}[label=(\roman*)]
\item there is no $ O(\vert \partial J(u) \vert^{2}) $ in the remainder in case $ u=\alpha^{i}\varphi_{i} $
\item $ n\geq 5 $ and $ \tau=0 $
\item the blow-up points are far from each other, since 
\item $\vert a_{i}-x_{i} \vert\leq \varepsilon$ and $ x_{i}\neq x_{j} $ for $ i\neq j $, in particular 
$ \vert \nabla K_{i} \vert=o_{\varepsilon}(1)$   
\end{enumerate}
Moreover and, since $ \vert 1-\frac{rK_{i}\alpha_{i}^{\frac{4}{n-2}}}{k} \vert \leq \varepsilon $, we obtain
\begin{equation*}
J(\alpha^{i}\varphi_{i})
=
\frac
{\hat c_{0}\sum_{i} \alpha_{i}^{2}}
{(\sum_{i} K(a_{i})\alpha_{i}^{\frac{2n}{n-2}})^{\frac{n-2}{n}}}
\left(
1
-
\hat c_{2}
\frac{\sum_{i}\frac{(1+o_{\varepsilon}(1))\Delta K(x_{i})}{K^{\frac{n}{2}}(x_{i})\lambda_{i}^{2}}}
{\sum_{j}K^{\frac{2-n}{2}}(x_{j})},  
\right) 
\end{equation*}
recalling the non degeneracy assumption \eqref{nd}. Passing to the Morse charts and expanding we get
\begin{equation*}
J(\alpha^{i}\varphi_{i})
=
\frac
{\hat c_{0}\sum_{i} \alpha_{i}^{2}}
{(\sum_{i} K(x_{i})\alpha_{i}^{\frac{2n}{n-2}})^{\frac{n-2}{n}}}
\left(
1
-
\hat c_{1}
\frac
{
\sum_{i}\frac{(1+o_{\varepsilon}(1))}{K^{\frac{n}{2}}(x_{i})}
(\sum_{j}\vert \tilde a_{i,j}^{+} \vert^{2}-\sum_{j}\vert \tilde a_{i,j}^{-} \vert^{2})
}
{\sum_{j}K^{\frac{2-n}{2}}(x_{j})}
-
\hat c_{2}
\frac{\sum_{i}\frac{(1+o_{\varepsilon}(1))\Delta K(x_{i})}{K^{\frac{n}{2}}(x_{i})\lambda_{i}^{2}}}
{\sum_{j}K^{\frac{2-n}{2}}(x_{j})}
\right) 
,
\end{equation*}
where $ \hat c_{1}=\frac{n-2}{2} $.  
Finally consider the scaling invariant function
\begin{equation*}
\begin{split}
f(\alpha_{i})
= &
\frac
{\sum_{i} \alpha_{i}^{2}}
{(\sum_{i} K(x_{i})\alpha_{i}^{\frac{2n}{n-2}})^{\frac{n-2}{n}}},
\end{split}
\end{equation*}
whose restriction to 
$$
X_{\alpha}=\{ \sum_{i}K(x_{i})\alpha^{\frac{2n}{n-2}}_{i}=1 \}$$
reflects the restriction of $J$ to $ X $. Then $f\lfloor_{X_{\alpha}} $   
has a unique, strict and non degenerate maximum in 
\begin{equation*}
\begin{split}
\alpha_{i}=\frac{\Theta}{K^{\frac{2-n}{4}}(x_{i})} \; \text{ and there } \;
\frac{1}{2}\partial^2_{\alpha_{i},\alpha_{j}}f(\alpha)
= &
-\frac{4}{n-2}
\frac
{
\delta_{i, j}
-
\frac{\alpha_{i}\alpha_{j}}{\sum_{r}\alpha_{r}^{2}}
}
{(\sum_{r}K(x_{r})\alpha_{r}^{\frac{2n}{n-2}})^{\frac{n-2}{n}}}.
\end{split}
\end{equation*}
In particular 
$
\Theta^{\frac{2n}{n-2}}=\sum_i K^{\frac{n-2}{2}}(x_i)
$
and due smallness of 
$\vert \partial J\vert $ on $V(q,\varepsilon)$ necessarily
$$
\alpha_{i}=\frac{\Theta}{K^{\frac{2-n}{4}}(x_{i})}
+
o_\varepsilon(1) 
$$
Denoting hence by 
$(\tilde \alpha_i)\in \R^{q}$ for $i=1,\ldots, q-1$ the eigenvector 
with negative eigenvalue $\lambda_{\tilde \alpha_i}$ of 
$$
\mathcal{A}_{i,j}
=
-\frac{4}{n-2}
(
\delta_{i, j}
-
\frac
{K^{\frac{2-n}{4}}(x_i)K^{\frac{2-n}{4}}(x_j)}
{\sum_{r}K^{\frac{2-n}{2}}(x_r)}
)
$$
and $(\alpha_q)=(K^{\frac{2-n}{4}}(x_{1}),\ldots,K^{\frac{2-n}{4}}(x_{q}))$
with $\lambda_{\alpha_{q}}=0$, we conclude with 
\begin{equation*}
\begin{split}
J(\alpha^{i}\varphi_{i})
=
\hat c_{0}
(\sum_{i} \frac{1}{K^{\frac{n-2}{2}}(x_{i})})^{\frac{2}{n}}
\biggr(
1
& -
\sum_i \left(1+o_{\varepsilon}(1)\right)\lambda_{\tilde \alpha_i}\tilde \alpha_i^2 \\
& -
\hat c_{1}
\frac
{
\sum_{i}\frac{(1+o_{\varepsilon}(1))}{K^{\frac{n}{2}}(x_{i})}
(\sum_{j}\vert \tilde a_{i,j}^{+} \vert^{2}-\sum_{j}\vert \tilde a_{i,j}^{-} \vert^{2})
}
{\sum_{j}K^{\frac{2-n}{2}}(x_{j})}
-
\hat c_{2}
\frac{\sum_{i}\frac{(1+o_{\varepsilon}(1))\Delta K(x_{i})}{K^{\frac{n}{2}}(x_{i})\lambda_{i}^{2}}}
{\sum_{j}K^{\frac{2-n}{2}}(x_{j})}
\biggr).
\end{split} 
\end{equation*}
Recalling $ \Delta K(x_i)<0 $,  \eqref{tilde_v_positive} and \eqref{bar_v_negligible}, the proposition follows.  
\end{proof} 

Let us conclude with a discussion of the inaccuracy in \cite{bab1}, namely,  that the deformation constructed in its Appendix 2, cf. also \cite{[BCCH4]}, leaves the variational space
  \begin{equation*}
X=\{0\leq u \in W^{1,2}(M) \mid \Vert u \Vert=1\}
\end{equation*}
by not preserving non negativity $u\geq 0$ and not preserving the normalisation $\Vert u \Vert=1$. 
While the first violation is not an issue as exposed in \cite{[BCCH4]}, the latter has to be addressed. 
Let us discuss some possibilities. 
\begin{enumerate}[label=(\roman*)]
 \item \textit{Naive renormalisation}
The most simple approach would be to let flow and renormalise afterwards, which however might lead to a lack of well definedness as a flow on $X=\{ \Vert \cdot \Vert =1 \} $
.
 
\item \textit{Brute force normalisation} 
The  construction in \cite{bab1} is an adaptation at \textit{infinity}, i.e. flow lines of type  
\begin{equation*}
u=\alpha^{i}\varphi_{a_{i},\lambda_{i}}+v.
\end{equation*} 
Leaving the $v$-part aside, the constructed  vectorfield prescribes a movement in $a_{i}$ and $\lambda_{i}$  keeping the scaling parameters $\alpha_{i}$ invariant. Hence one might adjust the $\alpha_{i}$ dynamically, e.g. along 
\begin{equation*}
\frac{\dot\alpha_{i}}{\alpha_{i}}=\beta_{\alpha},\; \beta_{\alpha}=\beta_{\alpha}(a_{i},\lambda_{i})
\end{equation*}
in order to preserve $\Vert u \Vert =1$. This however adds error terms of type
\begin{equation*}
o(\sum_{r}
\vert
1
-
\frac{\sum_{s}\alpha_{s}^{2}}{\sum_{s}K(a_{s})\alpha_{s}^{\frac{2n}{n-2}}}K(a_{r})\alpha_{r}^{\frac{4}{n-2}} 
\vert
) ,
\end{equation*}
when verifying energy decreasing, i.e. $\partial_{t}J(u)\leq 0$, and hence  necessitates a dynamical control of these quantities, which is not available from \cite{bab1} or \cite{[BCCH4]}. We perform this argument here.

\item \textit{Geometric normalisation}  The most intuitive way to adjust the vectorfield $w_{0}$ constructed in \cite{bab1} in order to preserve $\Vert u \Vert=1$ is passing from $\partial_{t}u=w_{0}(u)$ to
\begin{equation}\label{perturbed_flow}
\partial_{t}u=w(u)=w_{0}(u)-\frac{\langle w_{0}(u),u\rangle}{\Vert u \Vert^{2}}u.
\end{equation}
Of course the flow lines for $w_{0}$ and $w$ are then not evidently related and passing from $w_{0}$ to $w$
perturbs the movements in $a_{i}$ and $\lambda_{i}$. But then the statement of Proposition A2 in \cite{bab1}, that away from the \textit{critical points at infinity}
$\overline{\lambda}=\max_{i}\lambda_{i}$ is non increasing, requires justification.

\end{enumerate}

Since there has been a variety of scientific research relying on \cite{bab1} and in particular its Appendix 2, we would like to point out, that in our opinion and based on the availability of better estimates on $ v$  the errors induced by the necessity to  normalise the flow are not critical at least in  low dimensions $n=3,4,5$. 
Also note, that for instance \cite{bab} describing the positivity and norm preserving gradient flow, is not affected.  

\

\begin{center}
\hline
\textbf{Acknowledgment} \\
 M.Mayer has been supported by the Italian MIUR Department of Excellence grant CUP E83C18000100006. 
\end{center}

\end{document}